\newtheorem{theorem}{Theorem}[section]
\newtheorem{coro}[theorem]{Corollary}
\newtheorem{lemma}[theorem]{Lemma}
\newtheorem{prop}[theorem]{Proposition}
\newtheorem{definition}[theorem]{Definition}
\theoremstyle{definition}
\newtheorem*{remark}{Remark}
\newcommand{\nts}{\hspace{-0.5pt}}
\newcommand{\dd}{\, \mathrm{d}}
\newcommand{\Cu}{C_{\mathsf{u}}}
\newcommand{\WAP}{\operatorname{W\nts A P}}
\newcommand{\nfunction}{\mathcal{N}}
\newcommand{\overlined}{\overline{d}}
\newcommand{\Average}{\mathrm{A}}
\newcommand{\Freq}{\mathrm{Freq}}
\newcommand{\Bap}{\mathrm{Bap}}
\newcommand{\Eig}{\mathrm{Eig}}
\newcommand{\NN}{\mathbb{N}}
\newcommand{\ZZ}{\mathbb{Z}}
\newcommand{\CC}{\mathbb{C}}
\newcommand{\RR}{\mathbb{R}}
\newcommand{\TT}{\mathbb{T}}
\newcommand{\AAA}{\mathbb A}
\newcommand{\cA}{\mathcal{A}}
\begin{document}

\title{Pure point spectrum for dynamical systems\\ and mean almost periodicity}

\author{Daniel Lenz}
\address{Mathematisches Institut, Friedrich Schiller Universit\"at Jena,
07743 Jena, Germany}
\email{daniel.lenz@uni-jena.de}
\urladdr{http://www.analysis-lenz.uni-jena.de}

\author{Timo Spindeler}
\address{Department of Mathematical and Statistical Sciences, \newline
\hspace*{\parindent}632 CAB, University of Alberta, Edmonton, AB,
T6G 2G1, Canada} \email{spindele@ualberta.ca}

\author{Nicolae Strungaru}
\address{Department of Mathematical Sciences, MacEwan University \\
10700 -- 104 Avenue, Edmonton, AB, T5J 4S2, Canada\\
and \\
Institute of Mathematics ``Simon Stoilow''\\
Bucharest, Romania} \email{strungarun@macewan.ca}
\urladdr{http://academic.macewan.ca/strungarun/}

\begin{abstract} We consider metrizable ergodic   topological dynamical
systems over locally compact, $\sigma$-compact abelian groups. We
study pure point spectrum via suitable notions of almost periodicity
for the points of the dynamical system. More specifically, we
characterize pure point spectrum via mean almost periodicity of
generic points. We then go on and show how  Besicovitch almost
periodic points determine both eigenfunctions and the measure in
this case. After this, we characterize those systems
arising from Weyl almost periodic points and use this to
characterize weak and Bohr almost periodic systems. Finally, we
consider applications to aperiodic order.
\end{abstract}

\maketitle

\tableofcontents

\section*{Introduction}
This article is concerned with dynamical systems with  pure point
spectrum. The dynamical systems in question consist of a continuous
action of a locally compact abelian group on a compact metric space
together with an invariant probability measure on the space. Pure
point spectrum means that there exists an orthonormal basis of
eigenfunctions.  In a sense, such systems are the simplest possible
dynamical systems. Their study is a most basic  ingredient in the
conceptual theory of dynamical systems as witnessed by such
fundamental results as the Halmos--von Neumann theorem or the
Furstenberg structure theorem.  Accordingly, a variety of
characterizations for pure point spectrum has been established over
the decades.

Recent years have brought two new lines of interest in such systems.
One line is given by a series of works which analyze such systems
via (weak) notions of equicontinuity \cite{DG,FGL,GR,GM,GRJY}. The
main thrust of these works is to characterize pure point spectrum as
well as various strengthened versions thereof (see \cite{FGJ,Ver}
for related work as well). The other line comes from the
investigation of aperiodic order. Aperiodic order, also known as
mathematical theory of quasicrystals, has emerged as fruitful field
of (not only)  mathematics  over the last three decades,  see e.g.
\cite{TAO} for a recent monograph and \cite{BaakeMoody00,KLS} for
recent collections of surveys. A key feature of aperiodic order is
the occurrence of (pure) point diffraction. A central result in the
mathematical treatment of aperiodic order gives that pure point
diffraction can be understood as pure point spectrum of suitable
associated dynamical systems. In fact, this result is the outcome of
a cumulative effort of various people over the last decades
\cite{Dwo,LMS,Gou-1,Martin2,BL,LS,LM}.

A common feature of all these works is that their considerations
share a flavor of almost periodicity. On an intuitive level this is
not surprising. After all, pure point spectrum means that all
spectral measures are pure point measures. This in turn is
equivalent to the Fourier transforms of these measures being almost
periodic \cite{MoSt,GLA}. In this way, almost periodicity properties of functions
and their averages come into play in a very natural way.

However, what is lacking is a description of pure point
spectrum via almost periodicity properties of the \textbf{points} of
the dynamical system.  The goal of this article it to provide such a
characterization and to study some of its consequences.

In order to do so, we  introduce for points in dynamical systems the
concepts of mean almost periodicity, Besicovitch almost periodicity,
Weyl almost periodicity, weak almost periodicity and
Bohr almost periodicity,  respectively.

After the discussion of the necessary background in Section
\ref{s:mean}, our first achievement is  Theorem \ref{t:main} in
Section \ref{s:main}. This theorem says that a system has pure point
spectrum if and only if every (or even just one) generic point is
mean almost periodic. If the dynamical system is ergodic this is
then equivalent to almost all points being mean almost periodic.

As for mean almost periodicity of points in turn various
characterizations are discussed in Section \ref{s:main}. One of them
proceeds  via averages of distance between the orbit of the point
and a shifted orbit.  Another one characterizes mean almost
periodicity  of a point via almost periodicity properties of the
sampling of continuous functions along the orbit of this point. In
this way, we have a rather complete and clear picture of the meaning
of mean almost periodic points for general dynamical systems.

This picture ties in with various earlier results.  In the case
where the group is just the integers a related characterization via
sampling of bounded measurable function is given in \cite{BeLo} for
measurable dynamical systems. In the more specific situation of
subshifts over a finite alphabet there is also a characterization of
pure point spectrum via a metric property of points in \cite{Ver}.
Moreover, we point out a companion article \cite{LeSpStr}  dealing
with fundamental issues in aperiodic order via almost periodicity of
measures. As an application, it treats the situation of special
dynamical system viz translation bounded measures dynamical systems.
These systems are particularly relevant to aperiodic order.  In
these systems the points are measures and this allows one to work
with almost periodicity properties of measures. Of course, this
approach it not available for general dynamical systems.

In terms of methods it should  be emphasized that our proof of
Theorem \ref{t:main}  is completely different from the ones given in
\cite{LeSpStr,BeLo,Ver}. It relies on a recent characterization of
pure point spectrum by Bohr almost periodicity of an averaged metric
recently obtained by one of the authors \cite{Len}.

In Section \ref{s:strengthening}, we then introduce  Besicovitch
almost periodic points. While the condition of Besicovitch almost
periodicity is strictly stronger than mean almost periodicity, we
can still show that an ergodic  dynamical system has pure point
spectrum if and only if almost all points are Besicovitch  almost
periodic and this holds if and only if there exists one generic
Besicovitch almost periodic point (Theorem \ref{t:main:bap}).  In
this respect the difference between mean and Besicovitch almost
periodicity  is not too big. The advantage of Besicovitch  almost
periodic points is that they allow for averaging with characters. In
particular, it is possible to compute the eigenfunctions via the
Besicovitch  almost periodic points (Theorem \ref{t:main:bap}). In
fact, the complete spectral theory of the dynamical system can be
directly computed from  any single typical Besicovitch almost
periodic point (Theorem \ref{t:converse}).

In Section \ref{s:weyl}, we introduce Weyl  almost periodic points.
Weyl almost periodicity of a point is substantially stronger than
Besicovitch almost periodicity. In fact, a point is Weyl  almost
periodic if and only if its orbit closure is uniquely ergodic and
has pure point spectrum with continuous eigenfunctions (Theorem
\ref{t:main-weyl}). In this case all points in the orbit closure are
Weyl almost periodic (Lemma \ref{l:orbit-closure}). To put these
results in perspective we note  that  \cite{DG,FGL} show that a
dynamical system is mean equicontinuous if and only if it is
uniquely ergodic with pure point spectrum and continuous
eigenvalues. Hence,  a point is Weyl almost periodic if and only if
its orbit closure is mean equicontinuous. So, our results can be
understood to provide  a natural pointwise counterpart to the
results of \cite{FGL}.

In Section \ref{s:Weakly-and-Bohr}, we investigate two special
classes of Weyl almost periodic points viz Bohr and weakly almost
periodic points respectively. This allows us to characterize Bohr
and weakly almost periodic dynamical systems. In particular, we
reprove a main result of \cite{LS2}.    An application of our
results to aperiodic order is given in Section \ref{s:application}.
This includes an alternative proof for the main results on measure
dynamical systems contained  in \cite{LeSpStr}.

Our article give a comprehensive treatment of most relevant concepts
of almost periodicity in the context of pure point spectrum.  Some
parts of our considerations allow for simple abstractions.  As this
may be of value for further investigations we include  a brief
discussion of some basic results in Section \ref{s:abstract}.

It seems that Besicovitch almost periodicity is not well known (for
groups other than $\RR$) and there is also some ambiguity in the way
it is defined. For this reason we include some  appendices
discussing the almost periodicity properties needed in this article
as well as basic properties of the class of continuous functions
with these almost periodicity properties.

\bigskip

{\small \textbf{Acknowledgments.} Parts of this work were done while
D.L. was visiting the department of mathematics in Geneva in 2019.
He would like to thank the department for hospitality. He would also
like to acknowledge most inspiring discussions with Gerhard Keller
and Michael Baake at the workshop on \textit{Model sets and
aperiodic order} in Durham in 2018. Part of the work was done when
T.S. and N.S. visited Friedrich-Schiller-Universit\"at Jena, and
they would like to thank the department of mathematics for
hospitality. The work was supported by DFG with grant 415818660(TS)
and by NSERC with grants 03762-2014 and 2020-00038(NS), and the authors are grateful
for the support.}

\section{Background on dynamical systems, pure point spectrum and the upper mean}
\label{s:mean}
In this section we review the necessary concepts from dynamical
systems and introduce the upper mean $\overline{M}$, which is
crucial for our subsequent considerations.

\bigskip

Throughout the paper, we denote the set of continuous complex valued functions
on the topological space $Y$ by $C(Y)$.

We consider  a compact metric  space $X$ equipped with a continuous
action
$$
\alpha: G\times X\longrightarrow X , \qquad (t,x)\mapsto
\alpha_t (x) ,
$$
of a locally compact, $\sigma$-compact,  abelian
group $G$. We then call $(X,G)$ a \textit{dynamical system (over the
space $X$)}. Often we will also be given a probability measure
$m$ on $X$, which is invariant under the action of $G$. We then call
$(X,G,m)$ a \textit{dynamical system} as well. We will write $tx$
instead of $\alpha_t (x)$ for $t\in G$ and $x\in X$. The composition
on $G$ itself is written additively and the neutral element of $G$
is denoted as $e$. We fix a Haar measure  on $G$. The Haar measure
of a measurable subset $A\subset  G$ is denoted by $|A|$ and the
integral of an integrable function $f$ on $G$ by $\int f(t)\, \dd t$.

Whenever a dynamical system $(X,G)$ is given  we will furthermore
make use of
\begin{itemize}
\item a metric $d$ on $X$ which generates the topology,
\item  a F\o lner sequence $(B_n)$ in $G$, i.e. each $B_n$ is
an open relatively compact subset of $G$  and
\[
\frac{|(B_n \setminus (t+B_n)) \cap ((t+B_n)\setminus B_n)|}{|B_n|}
\to 0,\qquad n\to \infty,
\]
for all $t\in G$.
\end{itemize}
 Note that a F\O lner sequence exists in a locally compact abelian
group $G$ if an only if $G$ is $\sigma$-compact \cite[Prop.~B6]{SS}.
For this reason, we will always assume that $G$ is $\sigma$-compact.

The \textit{orbit} of $x$ is given by $Gx:=\{tx : t\in G\}$ and the
\textit{orbit closure} $\overline{Gx}$ is the closure of the orbit.
If the orbit closure of  $x\in X$ agrees with $X$, the element $x$
is called \textit{transitive}. If every $x\in X$ is transitive the
dynamical system is called \textit{minimal}. The dynamical system
$(X,G)$ is called \textit{uniquely ergodic} if there exists only one
invariant probability measure on $X$. We then denote this measure by
$m$ and call  $(X,G,m)$ uniquely ergodic as well.

The dynamical system $(X,G,m)$ is \textit{ergodic} if any invariant
measurable subset $A$ of $G$ satisfies $m(A)=1$ or $m(A) =0$. If
$(X,G,m)$ is ergodic any F\o lner sequence has a subsequence $(B_n)$
for which \textit{Birkhoff ergodic theorem holds} i.e.
\[
\lim_{n\to \infty} \frac{1}{|B_n|} \int_{B_n} f(tx)\, \dd t = \int_X f \,
\dd m
\]
is valid for almost every $x\in X$ whenever $f:
X\longrightarrow \CC$  integrable.

Whenever a dynamical system $(X,G,m)$ and a F\o lner sequence
$(B_n)$ is given,  a point $y\in X$ is called $m$-\textit{generic}
with respect to the F\o lner sequence if
\[
\lim_{n\to \infty} \frac{1}{|B_n|} \int_{B_n} f(ty)\, \dd t =
\int_X  f(x) \, \dd m(x)
\]
holds for any continuous $f :
X\longrightarrow \CC$. If the measure $m$ and the sequence $(B_n)$
are clear from the context we just speak about generic points.
Generic points play a key role in our subsequent considerations as
they determine the measure and - in this sense - the whole dynamical
system. As is well-known (and not hard to see) the set of generic
points is measurable and invariant under the group action. Moreover,
the  set of generic points has full measure if $(X,G,m)$ is ergodic
and Birkhoff's ergodic theorem holds along the underlying F\o lner
sequence. While we do not need it here, it  is instructive for our
subsequent considerations to note that a converse of sorts holds as
well: If $m$ is an invariant probability measure such that the set
of $m$-generic points with respect to some F\o lner sequence has
full measure, then $(X,G,m)$ is ergodic. So, ergodicity is a
necessary and sufficient condition for having an ample supply of
generic points at ones disposal. This is ultimately the reason that
most (but not all)  of our theorems below deal with  ergodic
systems.

A dynamical system $(X,G,m)$ is said to have \textit{pure point
spectrum} if there exists an orthonormal basis of $L^2 (X,m)$
consisting of eigenfunctions. Here, an $f\in L^2 (X,m)$ with $f\neq
0$ is called an \textit{eigenfunction} if for any $t\in G$ there
exist a $\xi(t)\in \CC$ with
$$f(t\cdot) = \xi(t) f$$
in the sense of $L^2 (X,m)$ functions. In this case, each $\xi(t)$
belongs to the group $$\TT:= \{z\in \CC : |z| = 1\}$$ and the map
$$\xi: G\longrightarrow \TT ,\qquad t\mapsto \xi(t),$$
can easily be seen to be a continuous group homomorphism. It is
called  \textit{eigenvalue}. Clearly, eigenfunctions to different
eigenvalues are orthogonal. As $X$ is compact and metrizable, $L^2
(X,m)$ is separable. Hence,  the set of eigenvalues is (at most)
countable. For $\xi \in \widehat{G}$ we denote by $P_\xi$ the
projection onto the eigenspace of $\xi$ if $\xi$ is an eigenvalue
and set $P_\xi =0$ if $\xi$ is not an eigenvalue. The set of
eigenvalues of $(X,G,m)$  will be denoted by $\Eig(X,G,m)$. As is
well known the set of eigenvalues is a group if $(X,G,m)$ is
ergodic.

For our further discussion we will also  rely on some concepts
defined purely with respect to $G$ (i.e. they do not need the
dynamical system). Let $(B_n)$ be a F\o lner sequence, and let
$B(G)$ denote the set of bounded measurable real-valued functions on
$G$. Then, we define the associated \textit{upper mean}  via
\[
\overline{M}_{(B_n)}  : B(G)
\longrightarrow [0,\infty),\qquad \overline{M}_{(B_n)} (h) :=\limsup_{n\to
\infty} \frac{1}{|B_n|} \int_{B_n} h(s)\, \dd s.
\]
If the F\o lner
sequence is clear from the context we drop the subscript $(B_n)$.
Clearly,  $\overline{M}$ gives rise to  seminorm on the space of
bounded measurable functions on $G$ via $f\mapsto\overline{M}(|f|)$.

A subset $A$ of $G$ is called \textit{relatively dense} if there
exists a compact set $K\subset G$ with
$$ G =\bigcup_{a\in A} (a + K).$$

A continuous bounded $f : G\longrightarrow \CC$ is called
\textit{Bohr almost periodic} if for any $\varepsilon >0$ the set of
$t\in G$ with
$$\| f - f(\cdot -t)\|_\infty <\varepsilon $$
is relatively dense. Here, the supremum norm $\|\cdot\|_\infty$ for
bounded complex valued functions on $G$ is defined via $\|f\|_\infty
=\sup_{s\in G} |f(s)|$.

\section{Mean almost periodic points and pure point spectrum}\label{s:main}
In this section, we introduce and study mean almost periodic points.
The main result of this section then  provides a characterization of
pure point spectrum via mean almost periodicity of points.

\bigskip

Whenever $(X,G)$ is a dynamical system with metric $d$ and $(B_n)$
is a F\o lner sequence, we define
$$D = D_d^{(B_n)} : X\times
X\longrightarrow [0,\infty),\qquad \: D(x,y):=\overline{M}( s \mapsto
d(sx, sy)).$$ Clearly, $D$ is a pseudometric. Moreover, the F\o lner
condition on $(B_n)$ easily gives that $D$ is invariant i.e.
satisfies $D(tx, ty) = D(x,y)$ for all $x,y\in X$ and $t\in G$.
Furthermore, for each $x\in X$ the function $G\to [0,\infty)$,
$t\mapsto D(x,tx)$, is uniformly continuous. Indeed, continuity at
$t=0$ is easily seen from the definition. Moreover, due  to the
invariance and the pseudometric properties we infer
\begin{equation}
|D(x,tx) - D(x,sx)|\leq D(tx, sx) = D(x, (s-t)x).  \tag{$\clubsuit$}
\end{equation}
When combined with continuity at $t =0$, this gives uniform
continuity. We will refer to $D$ as the \textit{averaged metric on
$X$ associated to $d$ and $(B_n)$}.

\begin{definition}[Mean almost periodic points] Let $(X,G)$ be a dynamical
system and $d$ a
metric on $X$ generating the topology and $(B_n)$ a F\o lner
sequence and $D$ the associated averaged metric. Then, a point $x\in
X$ is called  mean almost periodic with respect to $d$ and $(B_n)$
if for every $\varepsilon >0$ the set
$$\{t\in G : D(x,tx)< \varepsilon\}$$
is relatively dense.
\end{definition}

\begin{remark}  Almost periodicity properties with respect to
$\overline{M}$ are often connected with the  name of
\textit{Besicovitch}.  We use this for a strengthened version to be
introduced below. Here we stick to the term `mean' as this seems to
be the common term within the study of equicontinuity properties in
recent years (see e.g. \cite{DG,FGL,GM} as well as discussion in
Appendix \ref{s:a:beap}).
\end{remark}

By definition, mean almost periodicity depends on the chosen F\o lner
sequence. In our subsequent discussion of mean almost periodic
points, however,  we will often refrain from  explicitly referring
to the F\o lner sequence $(B_n)$ if it is clear from the context
which sequence is involved.

\begin{lemma}\label{l:map-via-bohr} Let $(X,G)$ be a dynamical
system and $d$ a metric on $X$ generating the topology. An $x\in X$
is mean almost periodic if and only if the function $G\ni t\mapsto
D(x,tx)\in \RR$ is Bohr almost periodic.
\end{lemma}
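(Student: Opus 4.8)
The plan is to prove a sharper quantitative statement from which the equivalence is immediate. Writing $g\colon G\to\RR$, $g(t):=D(x,tx)$, I will establish the identity
\[
\| g - g(\cdot - t)\|_\infty = D(x,tx) \qquad\text{for every } t\in G.
\]
Granting this, for each $\varepsilon>0$ the set $\{t\in G : D(x,tx)<\varepsilon\}$ coincides \emph{exactly} with the set of $t$ satisfying $\|g-g(\cdot-t)\|_\infty<\varepsilon$. Thus one of these families of sets is relatively dense for all $\varepsilon$ if and only if the other is, which is precisely the assertion that mean almost periodicity of $x$ is equivalent to Bohr almost periodicity of $g$.

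Before turning to the identity I would check the two standing hypotheses needed even to speak of Bohr almost periodicity of $g$. Boundedness is clear: since $X$ is compact, $d$ is bounded by $\diam(X)$, so $d(sx,sy)\le\diam(X)$ for all $s$ and hence $D\le\diam(X)$, giving $\|g\|_\infty\le\diam(X)$. Uniform continuity of $g$ has already been recorded in the paragraph preceding the definition of mean almost periodicity (the one containing $(\clubsuit)$), so I would simply invoke it.

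For the identity, the key input is the invariance and the reverse triangle inequality of the pseudometric $D$, exactly as packaged in $(\clubsuit)$. Substituting $s=t-r$ there yields $|g(t)-g(t-r)|\le D(x,(-r)x)$, and invariance together with the symmetry of $D$ gives $D(x,(-r)x)=D(rx,x)=g(r)$; hence $|g(t)-g(t-r)|\le g(r)$ for \emph{all} $t,r\in G$, so that $\|g-g(\cdot-r)\|_\infty\le g(r)$. The reverse inequality follows by evaluating at the single point $t=r$: since $g(e)=D(x,x)=0$, we obtain $g(r)=|g(r)-g(e)|=|g(r)-g(r-r)|\le\|g-g(\cdot-r)\|_\infty$. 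Combining the two bounds gives the claimed equality.

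I do not expect a genuine obstacle. The whole force of the argument is that invariance of $D$ makes the supremum over $t$ defining the Bohr almost-period condition collapse to the single value $g(r)$, so the two relatively dense sets literally agree. The only points requiring care are the sign bookkeeping and the evenness $g(-r)=g(r)$ used above, and remembering that the boundedness and uniform continuity of $g$ are what allow us to conclude Bohr almost periodicity once the almost-period sets are shown to be relatively dense.
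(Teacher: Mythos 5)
Your proof is correct and follows essentially the same route as the paper: the paper's argument also rests on $(\clubsuit)$ together with invariance of $D$ and $g(e)=0$, merely phrased as the two one-sided implications rather than as your exact identity $\| g - g(\cdot - t)\|_\infty = D(x,tx)$. Your packaging as an equality is a slightly sharper but equivalent formulation of the same idea, and your attention to boundedness and uniform continuity of $g$ matches what the paper records before the definition.
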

\begin{proof} Define $f$ on $G$ via $f(t) := D(x,tx)$. We have already noted that the function $f$ is uniformly continuous.
Clearly, Bohr almost periodicity of $f$ implies that $x$ is mean
almost periodic (as $f(0) =0$). Conversely, $(\clubsuit)$ gives
$|f(t +s) - f(s)| \leq f(t)$ for all $s,t\in G$ and  mean almost
periodicity of $x$ implies Bohr almost periodicity of $f$.
\end{proof}

Our next aim is to discuss independence of this definition from the
metric and to provide an alternative way of defining mean almost
periodicity via density of super level sets. The proofs of the
corresponding statements are not difficult and rather close to each
other. They rely on some simple facts stated in the next
proposition. We denote the characteristic function of a set
$A\subset G$ by $1_A$ (i.e. $1_A (x) = 1$ for $x\in A$ and $1_A (x)
=0$ for $x\notin A$).

\begin{prop}\label{p:simple} Let   $f :
G\longrightarrow [0,1]$  be given.  Then, for any $\delta>0$ the
following estimates hold for the set
$$A(f,\delta):=\{s\in G : f(s)
\geq \delta\}:$$

(a)  $\overline{M}( 1_{A(f,\delta)}) \leq \frac{1}{\delta} \;
\overline{M}(f)$.

(b) $\overline{M}(f) \leq M(1_{A(f,\delta)}) + \delta.$
\end{prop}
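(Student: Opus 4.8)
The two estimates concern the distribution function of a bounded nonnegative $f$, so my plan is to bound $f$ pointwise by simple functions built from the super level set $A(f,\delta)$ and then apply monotonicity and positive homogeneity of $\overline{M}$. The key observation is that $\overline{M}$ is a monotone, positively homogeneous, subadditive functional on bounded measurable functions (it is a $\limsup$ of averages), and in particular $\overline{M}(1_G)=1$ since each $B_n$ has positive finite Haar measure.

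For part (a), the plan is to use the elementary pointwise inequality
\begin{equation*}
\delta \cdot 1_{A(f,\delta)}(s) \leq f(s) \qquad \text{for all } s\in G.
\end{equation*}
This holds because on $A(f,\delta)$ we have $f(s)\geq \delta$ by definition, while off $A(f,\delta)$ the left-hand side is zero and $f\geq 0$. Applying $\overline{M}$ to both sides, using monotonicity and then positive homogeneity to pull out the constant $\delta$, yields $\delta\,\overline{M}(1_{A(f,\delta)})\leq\overline{M}(f)$, which rearranges to the claimed bound. No $\limsup$ manipulation beyond these basic properties is needed.

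For part (b), the plan is the complementary pointwise inequality
\begin{equation*}
f(s) \leq 1_{A(f,\delta)}(s) + \delta \qquad \text{for all } s\in G.
\end{equation*}
Indeed, on $A(f,\delta)$ the right-hand side is at least $1\geq f(s)$ since $f\leq 1$, and off $A(f,\delta)$ we have $f(s)<\delta$ so the right-hand side (which equals $\delta$ there) dominates $f(s)$. Applying $\overline{M}$, using monotonicity and subadditivity together with $\overline{M}(\delta\cdot 1_G)=\delta$, gives $\overline{M}(f)\leq\overline{M}(1_{A(f,\delta)})+\delta$. Here one should note the statement writes $M$ rather than $\overline{M}$ on the right; since the limit in question may not exist in general I would simply read this as $\overline{M}$, the only functional defined on all of $B(G)$.

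There is no real obstacle here: the whole argument is two pointwise envelope inequalities followed by the formal properties of $\overline{M}$. The only point requiring a line of care is justifying that these formal properties (monotonicity, positive homogeneity, subadditivity, and $\overline{M}(1_G)=1$) all follow at once from the definition of $\overline{M}$ as $\limsup_n |B_n|^{-1}\int_{B_n}$, since $\limsup$ preserves monotonicity and nonnegative-scalar homogeneity and is subadditive. I would state these properties in one sentence and then give the two displays above as the substance of the proof.
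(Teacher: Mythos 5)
Your proof is correct and is essentially the paper's own argument: part (a) rests on the same pointwise bound $\delta\cdot 1_{A(f,\delta)}\leq f$ (the paper writes it as $1_{A(f,\delta)}\leq \frac{1}{\delta}f$), and your pointwise inequality $f\leq 1_{A(f,\delta)}+\delta$ in part (b) is just a compressed form of the paper's split $\overline{M}(f)\leq\overline{M}(f\cdot 1_{A(f,\delta)})+\overline{M}(f\cdot 1_{A(f,\delta)^c})\leq\overline{M}(1_{A(f,\delta)})+\delta$. Your reading of the $M$ on the right-hand side of (b) as $\overline{M}$ is also the intended one.
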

\begin{proof} (a) We clearly have $ 1_{\{s\in G :
f(s) \geq \delta\}} \leq \frac{1}{\delta} f$. This  easily gives
(a).

\smallskip

(b) We compute
$$ \overline{M}(f) \leq  \overline{M}( f \cdot 1_{\{s\in G : f(s) \geq \delta\}}) +\overline{M}(
f\cdot  1_{\{s\in G : f(s) < \delta\}}) \leq \overline{M}( 1_{\{s\in
G : f(s) \geq \delta\}}) + \delta.$$ This finishes the proof.
\end{proof}

\begin{lemma}[Independence of the metric] \label{l:independence} Let $(X,G)$ be a dynamical system
with metrizable $X$ and $(B_n)$ a F\o lner sequence on $G$.
Then,  mean almost periodicity of an $x\in X$ does not depend on the
chosen metric (provided it generates the topology).
\end{lemma}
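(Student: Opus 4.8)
The plan is to reduce the comparison of the two averaged metrics to a comparison of the super-level sets of the underlying distance functions, exploiting Proposition \ref{p:simple}. Write $D = D_d^{(B_n)}$ and $D' = D_{d'}^{(B_n)}$ for the averaged metrics associated to the two metrics $d$ and $d'$. By symmetry in $d$ and $d'$ it suffices to show that mean almost periodicity with respect to $d$ implies mean almost periodicity with respect to $d'$.

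First I would normalize. Since $X$ is compact, both metrics are bounded, and replacing $d$ by $d/\diam_d(X)$ (and similarly for $d'$) yields metrics valued in $[0,1]$ that still generate the topology. As $D_{cd} = c\, D_d$ for every $c>0$, such rescaling only rescales the thresholds $\varepsilon$ and hence does not affect mean almost periodicity. So I may assume $d, d' \colon X\times X \to [0,1]$. Because both metrics generate the topology of the compact space $X$, the identity map $(X,d)\to (X,d')$ is uniformly continuous: for every $\varepsilon>0$ there is $\delta>0$ such that $d(p,q)<\delta$ implies $d'(p,q)<\varepsilon/2$ for all $p,q\in X$.

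The heart of the argument is a pointwise estimate. Fix $x\in X$ and $t\in G$, and set $f(s):= d(sx, stx)$ and $g(s):= d'(sx, stx)$; note $f\colon G\to[0,1]$ after the normalization. On the complement of the super-level set $A(f,\delta)=\{s : f(s)\geq \delta\}$ one has $d(sx,stx)<\delta$, hence $g(s)<\varepsilon/2$; on $A(f,\delta)$ one uses the trivial bound $g(s)\leq 1$. Thus $g \leq \tfrac{\varepsilon}{2} + 1_{A(f,\delta)}$, and applying the upper mean together with Proposition \ref{p:simple}(a) gives
\[
D'(x,tx) = \overline{M}(g) \leq \frac{\varepsilon}{2} + \overline{M}(1_{A(f,\delta)}) \leq \frac{\varepsilon}{2} + \frac{1}{\delta}\, \overline{M}(f) = \frac{\varepsilon}{2} + \frac{1}{\delta}\, D(x,tx).
\]

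With this inequality the conclusion is immediate. By mean almost periodicity of $x$ with respect to $d$, the set $\{t\in G : D(x,tx)<\delta\varepsilon/4\}$ is relatively dense, and for every such $t$ the displayed estimate yields $D'(x,tx) < \varepsilon/2 + \varepsilon/4 < \varepsilon$. Hence $\{t\in G : D'(x,tx)<\varepsilon\}$ contains a relatively dense set and is therefore relatively dense; as $\varepsilon>0$ was arbitrary, $x$ is mean almost periodic with respect to $d'$. I expect no genuine obstacle here: the only steps requiring care are the normalization and the verification of the super-level set estimate, which is exactly what Proposition \ref{p:simple} is designed for. The conceptual point is that uniform equivalence of $d$ and $d'$ on the compact space $X$ is what converts their topological equivalence into the quantitative bound needed to run the upper-mean comparison.
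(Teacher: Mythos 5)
Your proof is correct and takes essentially the same approach as the paper: both arguments normalize the metrics to be bounded by $1$, use compactness to obtain uniform continuity of the identity map between the two metrics, split along the super-level set of the orbit-distance function of the metric assumed to give mean almost periodicity, and control the mean of the resulting indicator via the Chebyshev-type bound of Proposition \ref{p:simple}(a). The only differences are cosmetic: you establish a pointwise inequality $g \leq \varepsilon/2 + 1_{A(f,\delta)}$ before averaging, whereas the paper splits the upper mean directly into the two regions, with slightly different choices of constants.
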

\begin{proof}
Let $e,d$ be two metrics on $X$, which generate the topology. For
$x\in X$ and $t\in G$ we define the functions $d_{t,x}$ and
$e_{t,x}$ on $G$ via $d_{t,x}(s) = d(sx,tsx)$ and $e_{t,x}(s) =
e(sx,tsx)$, respectively.  We show that for any $\varepsilon >0$
there exists a $\delta >0$ such that for any $t\in G$ and $x\in X$
we have
$$\overline{M} (d_{t,x}) <\varepsilon$$
whenever $\overline{M}(e_{t,x}) < \delta$ holds. The statement with
the roles of $e$ and $d$ reversed can be shown analogously and taken
together these two statements prove the lemma.

\smallskip

Without loss of generality we assume $d,e\leq 1$.

\smallskip

Let $\varepsilon >0$ be given.  Choose $\delta'
>0$ with $d(z,y) < \frac{\varepsilon}{2}$ whenever $e(z,y) <
\delta'$. Set
$$\delta:= \delta' \cdot \frac{\varepsilon}{2}.$$
If $\overline{M}(e_{t,x}) < \delta$ then (a) of Proposition
\ref{p:simple} gives
$$ \overline{M}(1_{\{ s : e_{t,x}(s) \geq \delta'\}}) \leq
\frac{1}{\delta'} \overline{M} (e_{t,x}) < \frac{\delta}{\delta'} =
\frac{\varepsilon}{2}.$$ Furthermore we note that, by definition of
$\delta'$, we have
$$\overline{M}(d_{t,x}  1_{\{s  : e_{t,x}(s) < \delta'\}})\leq
\frac{\varepsilon}{2}.$$ Given this we can now estimate

$$\overline{M} (d_{t,x}) \leq \overline{M} (d_{t,x}  1_{\{ s : e_{t,x} (s) \geq
\delta'\}})) + \overline{M} (d_{t,x} (s) 1_{\{ s : e_{t,x}
<\delta'\}}) <  \frac{\varepsilon}{2} + \frac{\varepsilon}{2} =
\varepsilon.$$
This is the desired statement.
\end{proof}

By the previous lemma  mean almost periodicity of a point is
independent of the underlying metric. Hence, we can (and will) from
now on refrain from specifying a metric when talking about mean
almost periodicity.

\smallskip

We define the \textit{upper density} of a subset $A\subset G$ via
$$\mbox{Dens}(A):=\overline{M} (1_A).$$

\begin{lemma}[Mean almost periodicity via density of superlevel
sets]\label{l:besi-distance} Let $(X,G)$ be a dynamical system with
metrizable $X$ and $(B_n)$ a F\o lner sequence on $G$. Then, the
following assertions for $x\in X$ are equivalent:
\begin{itemize}

\item[(i)] The point $x$ is mean almost periodic.

\item[(ii)] For any $\delta>0$ the set of $t\in G$
with
$$
\operatorname{Dens}(\{s\in G : d(sx,tsx)\geq \delta\})< \varepsilon
$$
is relatively dense for any $\varepsilon >0$.
\end{itemize}

\end{lemma}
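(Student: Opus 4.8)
The plan is to derive both implications directly from the two elementary estimates collected in Proposition \ref{p:simple}, after first normalizing the metric. By Lemma \ref{l:independence}, mean almost periodicity does not depend on the chosen metric, so without loss of generality I may assume $d \le 1$; the superlevel sets appearing in (ii) are likewise unaffected by replacing $d$ with $\min(d,1)$, so this normalization costs nothing. For $t \in G$ I write $f_t(s) := d(sx, tsx)$, a function $G \to [0,1]$, and record that by commutativity of $G$ and the definition of the averaged metric one has $D(x,tx) = \overline{M}(f_t)$, while $\operatorname{Dens}(\{s : d(sx,tsx) \ge \delta\}) = \overline{M}(1_{A(f_t,\delta)})$ in the notation of Proposition \ref{p:simple}.

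For the implication (i) $\Rightarrow$ (ii), I would fix $\delta,\varepsilon > 0$. Estimate (a) of Proposition \ref{p:simple} gives $\overline{M}(1_{A(f_t,\delta)}) \le \frac{1}{\delta}\,D(x,tx)$ for every $t$. Hence the set $\{t : D(x,tx) < \delta\varepsilon\}$, which is relatively dense by mean almost periodicity of $x$, is contained in $\{t : \operatorname{Dens}(\{s : d(sx,tsx) \ge \delta\}) < \varepsilon\}$; since any superset of a relatively dense set is relatively dense, the latter set is relatively dense as well.

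For the converse (ii) $\Rightarrow$ (i), I would fix $\eta > 0$ and apply (ii) with $\delta := \eta/2$ and $\varepsilon := \eta/2$, obtaining that $\{t : \overline{M}(1_{A(f_t,\eta/2)}) < \eta/2\}$ is relatively dense. Estimate (b) of Proposition \ref{p:simple} bounds $D(x,tx) = \overline{M}(f_t) \le \overline{M}(1_{A(f_t,\eta/2)}) + \eta/2$, so this relatively dense set is contained in $\{t : D(x,tx) < \eta\}$, which is therefore relatively dense. As $\eta > 0$ was arbitrary, $x$ is mean almost periodic.

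Since Proposition \ref{p:simple} already isolates the two one-line inequalities that power each direction, I do not anticipate a genuine obstacle. The only points requiring care are the quantifier bookkeeping --- choosing the thresholds $\delta\varepsilon$ and $\eta/2$ so that the inclusions between the superlevel-set conditions and the $D$-conditions run in the correct direction --- and the preliminary reduction to a bounded metric, which is what makes Proposition \ref{p:simple}, stated for functions into $[0,1]$, applicable in the first place.
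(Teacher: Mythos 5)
Your proposal is correct and follows essentially the same route as the paper's own proof: both directions are powered by estimates (a) and (b) of Proposition \ref{p:simple}, with the threshold $\delta\varepsilon$ in (i)$\Rightarrow$(ii) and the split $\delta=\varepsilon/2$ (your $\eta/2$) in (ii)$\Rightarrow$(i), together with the same normalization $d\le 1$ justified by Lemma \ref{l:independence}. The only cosmetic difference is that you normalize the metric up front for both implications (which is, if anything, slightly more careful, since Proposition \ref{p:simple} is stated for functions into $[0,1]$), whereas the paper invokes the normalization only in the direction (ii)$\Rightarrow$(i).
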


\begin{proof} We use the notation of the proof of the preceding
lemma.

\smallskip

(i)$\Longrightarrow$(ii): Let $\delta>0$ and $\varepsilon >0$ be
arbitrary. By (i) the set of  $t\in G$ with $\overline{M}(d_{t,x})<
\delta \varepsilon$ is relatively dense. Now, for any such $t\in G$
we obtain from (a) of Proposition \ref{p:simple}
$$ \overline{M}(1_{\{ s : d_{t,x}(s) \geq \delta\}}) < \varepsilon.$$

\smallskip

(ii)$\Longrightarrow$(i): Let $\varepsilon >0$ be given. Assume
without loss of generality that $d\leq 1$. Set $\delta =
\frac{\varepsilon}{2}$. By (ii) the set of  $t\in G$ with $
\mbox{Dens}(\{s\in G : d(sx,tsx)\geq \delta\})<
\frac{\varepsilon}{2}$ is relatively dense. Choose such a $t\in G$.
Then, (b) of Proposition \ref{p:simple} gives $$\overline{M}
(d_{t,x}) \leq \overline{M} ( 1_{\{ s : d_{t,x} (s) \geq \delta\}})
+ \delta =   \mbox{Dens}(\{s\in G : d(sx,tsx)\geq \delta\}) + \delta
<  \frac{\varepsilon}{2} + \frac{\varepsilon}{2} = \varepsilon.$$
This finishes the proof.
\end{proof}

We finish this section with the discussion of a further
characterization of mean almost periodicity via suitable functions.
To state this characterization (and similar characterizations in
subsequent sections) it is useful to define for $x\in X$ and $f\in
C(X)$ the function
$$f_x : G\longrightarrow \CC, \qquad  f_x (t) = f(tx).$$
Moreover, we set
$$\mathcal{A}_x :=\{ f_x : f\in C(X)\}.$$
Clearly, $\mathcal{A}_x$ is an algebra.

\begin{prop}[Completeness of $\mathcal{A}_x$] The algebra $\mathcal{A}_x$
 is complete with respect to $\|\cdot\|_\infty$.
 \end{prop}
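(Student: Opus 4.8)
The plan is to identify $\mathcal{A}_x$ isometrically with $C(\overline{Gx})$, the algebra of continuous functions on the orbit closure, and then invoke completeness of the latter. The starting point is the identity
$$\|f_x\|_\infty = \sup_{t\in G}|f(tx)| = \sup_{y\in \overline{Gx}}|f(y)|,$$
which holds because the orbit $Gx$ is dense in $\overline{Gx}$ and $|f|$ is continuous. In particular $f_x = 0$ forces $f$ to vanish on $Gx$, hence on $\overline{Gx}$, so the assignment $f_x \mapsto f|_{\overline{Gx}}$ is well defined on $\mathcal{A}_x$. The displayed identity shows this assignment is an isometry, it is clearly an algebra homomorphism, and it is injective, since $f|_{\overline{Gx}} = g|_{\overline{Gx}}$ already gives $f(tx)=g(tx)$ for all $t$, i.e. $f_x=g_x$.

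Next I would establish surjectivity onto $C(\overline{Gx})$. Given $h \in C(\overline{Gx})$, the orbit closure is a closed subset of the compact metric space $X$, so the Tietze extension theorem produces $\tilde h \in C(X)$ with $\tilde h|_{\overline{Gx}} = h$, whence $\tilde h_x$ maps to $h$. Thus $f_x \mapsto f|_{\overline{Gx}}$ is an isometric isomorphism of normed spaces from $\mathcal{A}_x$ onto $C(\overline{Gx})$. Since $\overline{Gx}$ is compact, $C(\overline{Gx})$ equipped with $\|\cdot\|_\infty$ is a Banach space, and transporting completeness across the isometric isomorphism yields completeness of $\mathcal{A}_x$.

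Equivalently, and without naming the isomorphism, one argues directly: a $\|\cdot\|_\infty$-Cauchy sequence $(f^{(n)}_x)$ in $\mathcal{A}_x$ yields, via the displayed identity, a Cauchy sequence of restrictions $f^{(n)}|_{\overline{Gx}}$ in $C(\overline{Gx})$ with limit $h_0$; any Tietze extension $h\in C(X)$ of $h_0$ then satisfies $h_x = \lim_n f^{(n)}_x$ uniformly on $G$, placing the limit back in $\mathcal{A}_x$.

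The only genuinely non-formal ingredient is this surjectivity, or closure, step: one must manufacture a global continuous function on $X$ from data prescribed only on the orbit closure, and this is exactly what the Tietze extension theorem supplies, since $X$ is compact metric (hence normal) and $\overline{Gx}$ is closed. Everything else reduces to the routine verification that orbit evaluation is an isometric algebra homomorphism, together with the density argument underlying the passage from the supremum over $Gx$ to the supremum over $\overline{Gx}$.
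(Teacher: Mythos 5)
Your proposal is correct and follows essentially the same route as the paper: both identify elements of $\mathcal{A}_x$ with their restrictions to the orbit closure $\overline{Gx}$, use uniform convergence there (completeness of $C(\overline{Gx})$, which is the paper's ``$\varepsilon/3$ argument''), and pull the limit back to $C(X)$ via Tietze's extension theorem. Your isometric-isomorphism packaging is a cosmetic refinement of the same argument, and your second paragraph is verbatim the paper's proof.
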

 \begin{proof}
Consider a sequence $(f^{(n)})$ in $C(X)$ such that $(f^{(n)}_x)$ is
a Cauchy sequence with respect to $\|\cdot\|_\infty$.  Then, a
direct $\varepsilon/3$ argument shows that the restrictions of
$f^{(n)}$ to the orbit closure of $x$ converge uniformly  to a
continuous function on the orbit closure. Now, the desired statement
follows from Tietze's extension theorem.
\end{proof}

A bounded measurable function $f : G\longrightarrow \CC$ is
\textit{mean almost periodic} with respect to $(B_n)$  if for every
$\varepsilon
>0$ the set
$$\{t\in G: \overline{M}(|f(\cdot) - f(\cdot - t)|)<\varepsilon \}$$
is relatively dense. The set of { uniformly continuous} mean almost
periodic functions is an algebra and closed under complex
conjugation and uniform convergence (see appendix \ref{s:a:map}).

As a consequence of the previous considerations we can now
characterize mean almost periodicity via functions.

\begin{lemma}[Mean almost periodicity via functions]\label{l:char-ap-functions}
Let $(X,G)$ be a dynamical system  and $(B_n)$ a F\o lner sequence
on $G$. For $x\in X$ the following assertions are equivalent:
\begin{itemize}
\item[(i)] The point $x$ is mean almost periodic.
\item[(ii)] Every element from  $\mathcal{A}_x$ is mean almost
periodic.
\item[(iii)] The set $\{f\in C(X) : f_x \mbox{  is mean almost periodic}\}$
separates the points of $\overline{Gx}$.
\item[(iv)]
 For any $s\in G$ the function $d^{(s)}_x$ is mean almost periodic, where
 $d^{(s)} \in C(X)$ is defined via $d^{(s)} (y) : =d(sx,y)$.
\end{itemize}
\end{lemma}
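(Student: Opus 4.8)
The plan is to prove a cycle of implications (i)$\Rightarrow$(ii)$\Rightarrow$(iii)$\Rightarrow$(iv)$\Rightarrow$(i), or rather organize the argument around the two substantial facts: that mean almost periodicity of $x$ transfers to mean almost periodicity of every $f_x$, and conversely that mean almost periodicity of enough functions $f_x$ forces mean almost periodicity of the point. First I would record the easy structural implications. The implication (ii)$\Rightarrow$(iii) is immediate, since if every element of $\mathcal{A}_x$ is mean almost periodic then the separating set in (iii) contains all of $C(X)$, which certainly separates points of the compact metric space $\overline{Gx}$. The implication (iii)$\Rightarrow$(iv) requires an observation: the functions $d^{(s)}$ for $s\in G$ themselves separate the points of $\overline{Gx}$ (since $d$ generates the topology), but this does not immediately give (iv); instead I would argue (i)$\Rightarrow$(iv) directly and use (iii) only to close the loop, so let me restructure.

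\textbf{The core implication is (i)$\Rightarrow$(ii).} Here I would exploit Lemma~\ref{l:map-via-bohr}, which says $x$ is mean almost periodic iff $t\mapsto D(x,tx)$ is Bohr almost periodic. The plan is to show that for $f\in C(X)$, the $\overline{M}$-almost-periods of $f_x$ contain the genuine almost periods coming from $D$. Concretely, for fixed $f\in C(X)$ and $\varepsilon>0$, uniform continuity of $f$ on the compact set $X$ gives a $\delta>0$ with $|f(y)-f(z)|<\varepsilon$ whenever $d(y,z)<\delta$. Then for any $t\in G$,
\[
\overline{M}\bigl(|f_x(\cdot)-f_x(\cdot-(-t))|\bigr)
=\overline{M}\bigl(s\mapsto |f(sx)-f((s+t)x)|\bigr),
\]
and I would split this average over the superlevel set $\{s: d(sx,(s+t)x)\ge\delta\}$ and its complement, exactly as in the proofs of Lemmas~\ref{l:independence} and \ref{l:besi-distance}. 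On the complement the integrand is bounded by $\varepsilon$; on the superlevel set I bound the integrand by $2\|f\|_\infty$ times the indicator and invoke part~(a) of Proposition~\ref{p:simple} together with $D(x,tx)=\overline{M}(d_{t,x})$ being small. Since mean almost periodicity of $x$ makes $\{t: D(x,tx)<\eta\}$ relatively dense for every $\eta$, choosing $\eta$ appropriately small (of order $\delta\varepsilon/\|f\|_\infty$) yields a relatively dense set of $\varepsilon$-almost-periods for $f_x$. \textbf{This splitting estimate is the main obstacle}, as it is the one genuinely quantitative step; everything hinges on transferring control of the averaged metric into control of averaged function differences via Proposition~\ref{p:simple}, and one must be careful that the $\delta$ chosen for $f$ does not depend on $t$.

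For the return implication I would prove (iv)$\Rightarrow$(i) and (iii)$\Rightarrow$(iv) to close the cycle. For (iv)$\Rightarrow$(i), the plan is to recover $D(x,tx)$ from the functions $d^{(s)}_x$: by invariance and the definition of $D$ one has, for the specific choice $f=d^{(e)}$ (i.e. $s=e$), that $d^{(e)}(y)=d(x,y)$ so $d^{(e)}_x(s)=d(x,sx)$, but what is actually needed is the average of $d(sx,tsx)$, which equals $\overline{M}(s\mapsto d^{(e)}(sx)\text{-type terms})$; more precisely I would use that $d^{(s)}_x(u)=d(sx,ux)$ and that mean almost periodicity of these functions, uniformly in $s$ in the sense furnished by (iv), controls $\overline{M}(d_{t,x})=\overline{M}(s\mapsto d(sx,tsx))$ directly by setting $u=s$ and $s=$ the shift. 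This identification, combined with an $\varepsilon/2$ argument splitting the diameter contributions, gives that the almost-periods of the $d^{(s)}_x$ are almost-periods for $t\mapsto D(x,tx)$, hence $x$ is mean almost periodic by Lemma~\ref{l:map-via-bohr}. Finally (iii)$\Rightarrow$(iv) follows because the already-established equivalence of (i) with the other conditions lets me upgrade: once (iii) holds I would show it forces (i) via a Stone--Weierstrass-type density argument on $\overline{Gx}$ using Proposition on completeness of $\mathcal{A}_x$, and then (i)$\Rightarrow$(iv) as a special case of (i)$\Rightarrow$(ii). I expect the verification that (iii) alone suffices — turning a separating family into full mean almost periodicity — to require the algebra structure of the mean almost periodic functions (closed under products and uniform limits, as noted in the text) so that the separating family generates a dense subalgebra of $C(\overline{Gx})$ all of whose restrictions are mean almost periodic.
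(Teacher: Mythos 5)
Your implication (i)$\Rightarrow$(ii) is correct, and it is in fact a nice direct argument: the paper instead proves (i)$\Rightarrow$(iv) by the triangle inequality alone and recovers (ii) by going around the loop, whereas your splitting over the superlevel set $\{s: d(sx,(s+t)x)\ge\delta\}$ via Proposition \ref{p:simple} works and the $\delta$ from uniform continuity of $f$ indeed does not depend on $t$. Also (ii)$\Rightarrow$(iii) is trivially fine. The genuine gap is that you never validly return from function-level almost periodicity to the point-level statement (i), and both of your proposed routes back fail. First, your direct (iv)$\Rightarrow$(i): ``setting $u=s$ and $s=$ the shift'' is not a legitimate operation. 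Assertion (iv) says that for each \emph{fixed} $s$ the averages over $u$ of $|d(sx,ux)-d(sx,(u-t)x)|$ are small for a relatively dense set of $t$ \emph{which may depend on} $s$; there is no ``uniformity in $s$ furnished by (iv)'', and smallness of these fixed-$s$ averages does not control the diagonal average $\overline{M}(s\mapsto d(sx,(t+s)x))=D(x,tx)$, in which the function index and the evaluation point move together. A repair is possible but needs two ingredients you do not mention: a finite cover of the compact set $\overline{Gx}$ by $d$-balls centred at points $s_i x$ (so that for each $u$ the triangle inequality bounds $d(ux,(u-t)x)$ by $2\delta+|d(s_ix,ux)-d(s_ix,(u-t)x)|$ for a suitable $i=i(u)$), together with the fact that finitely many mean almost periodic functions admit a relatively dense set of \emph{common} $\varepsilon$-almost periods (Lemma \ref{l:intersection-periods}, or Proposition \ref{p:sum}).

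Second, your (iii)$\Rightarrow$(i): the Stone--Weierstra\ss{} argument (which is exactly the paper's (iii)$\Rightarrow$(ii)) only yields that every $f_x$, $f\in C(X)$, is mean almost periodic, i.e.\ statement (ii); it does not by itself yield (i). The passage from (ii) to (i) is the real content of the lemma and is absent from your proposal. The paper does it by choosing a countable family $\mathcal{C}\subset C(X)$ of normalized functions separating the points, forming the auxiliary metric $e(z,y)=\sum_{f\in\mathcal{C}}c_f\,|f(z)-f(y)|$, using Proposition \ref{p:sum} to obtain a relatively dense set of $t$ with $\overline{M}(s\mapsto e(sx,(t+s)x))<\varepsilon$, and then invoking independence of the metric (Lemma \ref{l:independence}) to conclude mean almost periodicity of the point with respect to $d$. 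As written, your scheme is circular: (iii)$\Rightarrow$(iv) relies on (iii)$\Rightarrow$(i), which relies on the unproved (ii)$\Rightarrow$(i), and the only other way back to (i), your (iv)$\Rightarrow$(i), is the invalid diagonal identification. So the equivalence is not established without supplying one of these missing arguments.
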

\begin{remark} As the proof shows condition (iii) could equivalently
be formulated with $\overline{Gx}$ replaced by the whole space $X$.
\end{remark}

\begin{proof} (iv)$\Longrightarrow$(iii): This follows as the $d^{(s)}$, $s\in G$,
clearly separate the points of $\overline{Gx}$.

\smallskip

(iii)$\Longrightarrow$(ii):  Invoking the corresponding properties
of mean almost periodic functions,  we can easily see  that $\{f \in
C(X) : f_x \mbox{ is mean almost periodic}\}$ is an algebra, which
is closed under complex conjugation and uniform convergence. This
algebra clearly contains the constant functions. Moreover, by
assumption (iii) it separates the points of $\overline{Gx}$.
Furthermore this algebra contains every function $f\in  C(X)$, which
vanishes on $\overline{Gx}$ (as $f_x = 0$ for any such functions).
Thus, this algebra even separates the points of $X$.
 Now, (ii) follows from Stone--Weierstra\ss{}' theorem.

\smallskip

(ii)$\Longrightarrow$(i): Choose a countable set $\mathcal{C}
\subset C(X)$ such that any $f\in \mathcal{C}$ satisfies
$\|f\|_\infty \leq 1$ and such that the elements of $\mathcal{C}$
separate the points of $X$. Let $c_f>0$, $f\in \mathcal{C}$, with
$\sum_{f\in \mathcal{C}} c_f <\infty$ be given. Then,
$$e (z,y) := \sum_{f\in \mathcal{C}} c_f\, |f(x) - f(y)|$$
is a metric on $X$, which generates the topology. Moreover, by
assumption (ii) the function $f_x$ is mean almost periodic for any
$f\in C(X)$ and, hence, any $f\in \mathcal{C}$.  This easily gives
that the set of $t\in G$ with $\overline{M}(s\mapsto e(sx,(t+s)x))
<\varepsilon$ is relatively dense (compare Proposition \ref{p:sum}).
Thus, $x$ is mean almost periodic with respect to the metric $e$.
As mean almost periodicity does not depend on the metric, we
conclude (i).

\smallskip

(i)$\Longrightarrow$(iv): Let $z\in X$ be arbitrary and define
$d^{(z)} \in C(X)$ by $d^{(z)}  (y):= d(z,y)$.  The triangle
inequality for $d$ gives
$$\overline{M}( |d^{(z_x)} (\cdot - t) - d^{(z)}_x|)= \overline{M}( s\mapsto |d(z,(s-t)x) - d(z,sx)|)
\leq \overline{M}(s\mapsto d(sx, (s-t)x)) = D(x,-tx)$$ for any $t\in
G$ and $z\in X$. Now, mean almost periodicity of $d^z_x (\cdot)$
follows (for any $z\in X$) from (i).

\end{proof}

We now come to the main result of this section, which provides a
characterization of pure point spectrum via mean almost periodic
points.

\begin{theorem}\label{t:main}
Let  $(X,G,m)$ be a dynamical system  and $(B_n)$ a F\o lner
sequence on $G$. Assume that there exists a generic point for
$(X,G,m)$. Then, the following assertions are equivalent:

\begin{itemize}
\item[(i)] The dynamical system $(X,G,m)$ has pure point
spectrum.

\item[(ii)] Every generic point of $X$ is mean almost periodic.

\item[(iii)] One  generic point of $X$ is mean almost periodic.

\end{itemize}
If  $(X,G,m)$ is ergodic and the Birkhoff theorem holds along
$(B_n)$, these statements are also equivalent to the following
statement:

\begin{itemize}

\item[(iv)] Almost every $x\in X$ is  mean almost periodic.

\end{itemize}

\end{theorem}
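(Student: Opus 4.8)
The plan is to reduce mean almost periodicity of a point to Bohr almost periodicity of a single scalar function on $G$ and then feed this into the characterization of pure point spectrum from \cite{Len}. Concretely, by Lemma~\ref{l:map-via-bohr} a point $x$ is mean almost periodic if and only if the function $G \ni t \mapsto D(x,tx)$ is Bohr almost periodic. Hence each of (ii) and (iii) is a statement about Bohr almost periodicity of the diagonal of the averaged metric along the orbit of a generic point, and it is precisely this object that \cite{Len} relates to pure point spectrum. The implication (ii)$\Rightarrow$(iii) is then immediate, since a generic point exists by assumption.

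For (iii)$\Rightarrow$(i) I would take a generic mean almost periodic point $x$; by Lemma~\ref{l:map-via-bohr} the map $t\mapsto D(x,tx)$ is Bohr almost periodic, and since invariance of $D$ gives $D(sx,(s+t)x)=D(x,tx)$, this says exactly that the $G$-action is Bohr almost periodic with respect to the averaged pseudometric $D$ along the orbit of $x$, which is the condition that \cite{Len} relates to pure point spectrum. The characterization in \cite{Len} then yields pure point spectrum from this single generic witness. For (i)$\Rightarrow$(ii) I would run the equivalence in the other direction: pure point spectrum forces, via \cite{Len}, the diagonal $t\mapsto D(x,tx)$ to be Bohr almost periodic for \emph{every} generic point $x$, and Lemma~\ref{l:map-via-bohr} turns this back into mean almost periodicity. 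The point requiring care is therefore the matching of quantifiers with the statement imported from \cite{Len}: I need that characterization in the sharp form ``pure point spectrum $\Leftrightarrow$ the Bohr property holds for one generic point $\Leftrightarrow$ it holds for every generic point'', so that a single generic witness suffices for the reverse implication while the forward implication is uniform over all generic points.

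For the ergodic case I would use the observation, already recorded in Section~\ref{s:mean}, that when $(X,G,m)$ is ergodic and Birkhoff's theorem holds along $(B_n)$ the set of generic points has full measure. Then (ii)$\Rightarrow$(iv) is immediate, since almost every point is generic and hence, by (ii), mean almost periodic. Conversely, (iv)$\Rightarrow$(iii) follows because the set of mean almost periodic points and the set of generic points both have full measure, so their intersection is nonempty and provides a generic mean almost periodic point.

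I expect the main obstacle to be located entirely in the correct use of \cite{Len}: verifying that the ``averaged metric'' appearing there coincides with the pseudometric $D$ built from $d$ and $(B_n)$, that its Bohr almost periodicity is measured exactly by the relative denseness of $\{t : D(x,tx)<\varepsilon\}$, and that the cited result carries both quantifier forms so that the single-witness direction and the all-generic-points direction are simultaneously available. Once this bookkeeping is settled, the translation via Lemma~\ref{l:map-via-bohr} and the derivation of (iv) are routine.
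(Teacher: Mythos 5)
Your overall strategy (reduce mean almost periodicity to Bohr almost periodicity of a scalar function via Lemma~\ref{l:map-via-bohr}, then invoke \cite{Len}) is the same as the paper's, and your treatment of the ergodic statements (ii)$\Rightarrow$(iv) and (iv)$\Rightarrow$(iii) is correct and matches the paper. But there is a genuine gap in the main equivalence, and it sits exactly where you deferred the work as ``bookkeeping.'' The result of \cite{Len}, as used in the paper, characterizes pure point spectrum by Bohr almost periodicity of the \emph{measure-averaged} function
\[
\underline{d}(t)=\int_X d(x,tx)\,\dd m(x),
\]
not by any condition phrased in terms of a generic point or in terms of $D$ along an orbit. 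So \cite{Len} does not ``carry both quantifier forms'' you ask for: the sharp form ``pure point spectrum $\Leftrightarrow$ the Bohr property holds at one generic point $\Leftrightarrow$ it holds at every generic point'' is precisely what Theorem~\ref{t:main} is proving, and assuming the cited result already comes in that form is circular.

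The missing idea that closes this gap is a genericity identity. For each fixed $t\in G$ the function $f_t(x)=d(x,tx)$ is continuous on $X$, so if $y$ is generic then the orbit average of $f_t$ exists and equals its spatial average, i.e.
\[
D(y,ty)=\overline{M}\bigl(s\mapsto d(sy,tsy)\bigr)=\int_X d(x,tx)\,\dd m(x)=\underline{d}(t)
\qquad\text{for all } t\in G.
\]
This single identity shows that \emph{every} generic point $y$ produces the \emph{same} function $t\mapsto D(y,ty)$, namely $\underline{d}$; consequently mean almost periodicity of one generic point, mean almost periodicity of all generic points, and Bohr almost periodicity of $\underline{d}$ (hence, by \cite{Len}, pure point spectrum) are all literally the same condition. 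Without this identity your steps (iii)$\Rightarrow$(i) and (i)$\Rightarrow$(ii) do not connect to the cited theorem at all: note that $D$ averages over the group along an orbit while $\underline{d}$ averages over the space against $m$, so their agreement on generic points is not a matter of matching definitions but a consequence of genericity applied to the continuous functions $f_t$. That computation is the mathematical heart of the paper's proof, not routine verification.
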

\begin{remark}   A related result for
subshifts over a finite alphabet   can be found in Lemma 5 of
\cite{Ver}. There, pure point spectrum (i) is characterized  via a
variant of (iv) given by  a mean almost periodicity condition on
points defined via a metric (close in spirit to what is discussed
above in Lemma \ref{l:besi-distance}).
\end{remark}
\begin{proof} In the ergodic case
almost every point is generic. Hence, (ii)$\Longrightarrow$(iv) and
(iv)$\Longrightarrow$(iii) follow.  So we now turn to showing
equivalence between (i), (ii) and (iii) in the general case. We
clearly have (ii)$\Longrightarrow$(iii). To show
(i)$\Longrightarrow$(ii) and (iii)$\Longrightarrow$(i) we define
\[
\underline{d} : G\longrightarrow [0,\infty),\qquad \underline{d} (t) =
\int_X d(x,tx)\, \dd m(x).
\]
The main result of  \cite{Len} says that
(i) is equivalent to $\underline{d}$ being Bohr almost periodic.
Thus, it remains  to show that
\begin{itemize}
\item Bohr almost periodicity of $\underline{d}$ implies
(ii);

\item  (iii) implies Bohr almost periodicity of
$\underline{d}$.

\end{itemize}
Now, for $t\in G$ we can consider $f_t : X\longrightarrow
[0,\infty),\ f_t (x) = d(x,tx)$. Then, $f_t$ is clearly continuous.
Thus, whenever $y\in X$ is generic, we find
\[
\underline{d}(t) = \int_X d(x,tx)\, \dd m(x) =\int_X f_t (x)\, \dd m (x) =
\overline{M}  (s \mapsto f_t(sy))=D(y,ty)
\]
for every $t\in G$.
Moreover, the triangle inequality gives that $\underline{d}$ is Bohr
almost periodic if and only if the set
$$\{t\in G : \underline{d}(t) <\varepsilon\}$$
is relatively dense in $G$ for all $\varepsilon >0$. Putting this
together we easily obtain that $\underline{d}$ is Bohr almost
periodic if and only if one (every) generic $y\in X$ is mean almost
periodic.
\end{proof}

We emphasize that the first part of the preceding theorem does not
need an ergodicity assumption and illustrate this by the following
example.

\medskip

\textbf{Example - pure point spectrum in non ergodic case.} Consider
$\{0,1\}$ with discrete topology and  $X = \{0,1\}^\ZZ$ with product
topology. Equip $X$ with the shift action of $\ZZ$ given by
$\alpha_n(x) = x(\cdot - n)$ for $n\in\ZZ$. Let $\underline{1}$ and
$\underline{0}$ be the elements of $X$ which are constant equal to
$1$ and $0$ respectively. Then, clearly each of these elements is
invariant under the shift action and so are then the sets
$\{\underline{0}\}$ and $\{\underline{1}\}$. Thus,
$$m:=\frac{1}{2}(\delta_{\underline{0}} +
\delta_{\underline{1}})$$ is an invariant probability measure (where
$\delta_p$ denotes the unit point mass at $p$). Obviously, $m$ is
not ergodic. The space $L^2(X,m)$ is two-dimensional and $\sqrt{2}
\cdot 1_{\{\underline{0}\}}, \sqrt{2} \cdot 1_{\{\underline{1}\}}$
is an orthogonal basis consisting of eigenfunctions (to the
eigenvalue $1$). In particular, $(X,\ZZ,m)$ has pure point spectrum.
Now, consider the point $x\in X$ with $x(-k)$ arbitrary for $k\geq
0$ and $x(k) = 1 $ if $k\in \{2^n,\ldots,  2^n +2^{n-1}-1\}$ for
some $n\in \NN$ and $x(k) =0$ else. Then, it is not hard to see that
$x$ is generic for $m$ with respect to the F\o lner sequence $B_n
=\{1,\ldots, 2^n\}$. So, the theorem gives that $x$ is mean almost
periodic. In this example neither $\underline{0}$ nor
$\underline{1}$ are generic. Hence, $m$ does not give mass to
generic points and  the set of generic points has measure zero. Note
that the construction of $x$  could easily be modified to yield a
transitive generic point (by including suitable finite words of
slowly increasing length  between the blocks of $1's$ and $0's$ in
$x$).

\medskip

Combining the previous result,  Theorem \ref{t:main}, with the
characterization of mean almost periodicity via functions in Lemma
\ref{l:char-ap-functions} we obtain the  following.

 \begin{coro}\label{c:char-point-separating}
Let $(X,G,m)$ be an ergodic dynamical system with metrizable $X$ and
assume that the Birkhoff ergodic theorem holds along $(B_n)$.
 Then, the following assertions are equivalent.
 \begin{itemize}
\item[(i)] The dynamical system $(X,G,m)$ has  pure point spectrum.
\item[(ii)] For almost every $x\in X$ the set $\{f \in C(X) : \mbox{$f_x$
 is mean almost periodic}\}$  separates the points of $X$.
 \end{itemize}
\end{coro}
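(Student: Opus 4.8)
The plan is to derive the corollary directly by concatenating Theorem \ref{t:main} with Lemma \ref{l:char-ap-functions}, since the statement is essentially their conjunction. First, I would observe that the standing hypotheses are enough to invoke Theorem \ref{t:main}: because $(X,G,m)$ is ergodic and the Birkhoff theorem holds along $(B_n)$, the set of generic points has full measure, so in particular a generic point exists and the hypothesis of Theorem \ref{t:main} is satisfied. That theorem then yields, through the equivalence of its conditions (i) and (iv), that $(X,G,m)$ has pure point spectrum if and only if almost every $x\in X$ is mean almost periodic.

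Next I would rewrite the pointwise property ``mean almost periodic'' in terms of the separation condition of (ii). For this I would invoke the equivalence (i)$\Leftrightarrow$(iii) of Lemma \ref{l:char-ap-functions}: a point $x$ is mean almost periodic precisely when the set $\{f \in C(X) : f_x \text{ is mean almost periodic}\}$ separates the points of $\overline{Gx}$. By the remark following that lemma, the orbit closure $\overline{Gx}$ may be replaced by the whole space $X$ without changing the content of the statement, and this produces exactly the separation condition appearing in (ii) of the corollary.

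Chaining these two equivalences gives the assertion: pure point spectrum $\Leftrightarrow$ almost every $x$ is mean almost periodic $\Leftrightarrow$ for almost every $x$ the set $\{f \in C(X) : f_x \text{ is mean almost periodic}\}$ separates the points of $X$. I do not expect a genuine obstacle here, as the argument is a direct combination of cited results; the only two points requiring explicit care are the substitution of $X$ for $\overline{Gx}$ (handled by the remark after Lemma \ref{l:char-ap-functions}) and the verification that ergodicity together with Birkhoff's theorem along $(B_n)$ guarantees the existence of a generic point so that Theorem \ref{t:main} is applicable.
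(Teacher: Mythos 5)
Your proposal is correct and follows exactly the paper's own route: the paper derives this corollary precisely by combining Theorem \ref{t:main} (equivalence of pure point spectrum with almost every point being mean almost periodic, available since ergodicity plus Birkhoff along $(B_n)$ gives a full-measure set of generic points) with the equivalence (i)$\Leftrightarrow$(iii) of Lemma \ref{l:char-ap-functions} and the remark allowing $\overline{Gx}$ to be replaced by $X$. Nothing is missing; the two points you flag as needing care are exactly the ones the paper relies on implicitly.
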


\begin{remark}
A variant of this   statement (with (ii) replaced by  the stronger
condition that $\mathcal{A}_x$ consists only of mean almost periodic
functions)  is shown in \cite{Len1} based on an earlier version of
\cite{LeSpStr}. Our proof is different.  Note also that
 for ergodic systems over $G = \ZZ$, it is known that pure point
spectrum is equivalent to $\ZZ\ni n\mapsto f(nx)$ belonging to the
Besicovitch class for almost every $x\in X$ whenever $f$ is a
bounded measurable function on $X$, see Theorem 3.22 in \cite{BeLo}.
The condition of Besicovitch class is stronger than mean almost
periodicity (see also next section).
\end{remark}

If the system $(X,G,m)$ is uniquely ergodic,  then every $x\in X$ is
generic irrespective of the underlying F\o lner sequence (Oxtoby's
theorem). Thus, from the previous theorem we obtain immediately the
following corollary.

\begin{coro} Let  $(X,G,m)$ be a dynamical system  and $(B_n)$ a F\o lner
sequence on $G$. Assume that $(X,G,m)$ is uniquely  ergodic. Then,
the following assertions are equivalent:

\begin{itemize}
\item[(i)] The dynamical system $(X,G,m)$ has pure point
spectrum.
\item[(ii)] Every $x\in X$ is  mean almost periodic.
\item[(iii)] One $x\in X$ is mean almost periodic.
\end{itemize}

\end{coro}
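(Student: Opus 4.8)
The plan is to derive this as an immediate consequence of Theorem \ref{t:main}, the point being that unique ergodicity forces \emph{every} point of $X$ to be generic, so that the qualifier ``generic'' appearing in Theorem \ref{t:main} can simply be dropped.

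First I would establish, via Oxtoby's theorem, that under unique ergodicity every $x \in X$ is generic with respect to the given F\o lner sequence $(B_n)$. I find it cleanest to argue directly: for a fixed $x \in X$ consider the probability measures $\mu_n := \frac{1}{|B_n|} \int_{B_n} \delta_{tx} \dd t$ on the compact space $X$. By weak-$*$ compactness of the space of probability measures on $X$, the sequence $(\mu_n)$ has convergent subsequences, and the F\o lner property of $(B_n)$ guarantees that any weak-$*$ limit point is $G$-invariant; by unique ergodicity such a limit must equal $m$. Hence every weak-$*$ limit point of $(\mu_n)$ is $m$, so $\frac{1}{|B_n|}\int_{B_n} f(tx)\dd t \to \int_X f \dd m$ for all $f \in C(X)$, which is exactly the statement that $x$ is generic. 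Since $X$ carries a probability measure it is nonempty, so in particular a generic point exists and the hypothesis of Theorem \ref{t:main} is met.

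With every point of $X$ now known to be generic, assertion (ii) of Theorem \ref{t:main} (every generic point is mean almost periodic) reads precisely as assertion (ii) here (every $x \in X$ is mean almost periodic), and assertion (iii) of Theorem \ref{t:main} (one generic point is mean almost periodic) reads precisely as assertion (iii) here (one $x \in X$ is mean almost periodic). The equivalence of (i), (ii) and (iii) granted by Theorem \ref{t:main} then delivers the three equivalences claimed in the corollary.

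I do not expect a genuine obstacle here; the only point deserving care is that genericity is required along the prescribed F\o lner sequence $(B_n)$ rather than along some auxiliary sequence, and the weak-$*$ compactness argument above is exactly what makes the conclusion uniform over the choice of F\o lner sequence.
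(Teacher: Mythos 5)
Your proposal is correct and follows essentially the same route as the paper: the paper also invokes Oxtoby's theorem to conclude that under unique ergodicity every point is generic (with respect to any F\o lner sequence), and then reads the corollary off from Theorem \ref{t:main}. The only difference is that you spell out the standard weak-$*$ compactness proof of Oxtoby's statement, which the paper simply cites.
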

\begin{remark} The concept of mean almost periodicity depends on the chosen F\o lner sequence.
To see this, consider $X:=\{0,1\}^\ZZ$ with product topology and the
shift action of $\ZZ$ and the Bernoulli measure $m$ (product measure
of the measures giving equal weights $1/2$ to $\{0\}$ and $\{1\}$).
This system is ergodic and  $m$ almost every $x\in X$ contains
arbitrary long stretches of $0's$. For each of those $x$ we can then
choose a F\o lner sequence $(B_n)$ with $\overline{M}(s\mapsto
d(sx,(t+s)x) = 0$ for all $t\in \ZZ$ (by each  $B_n$ being chosen
`within' a long stretch of $0's$ with distance to the boundary of
these stretches increasing in $n$). Hence, each of these $x$ is mean
almost periodic. On the other hand, as the system does not have pure
point spectrum, we obtain from Theorem \ref{s:main} that  not every
of these $x$ will be almost periodic with respect to the standard
F\o lner sequence $B_n = \{0,\ldots,n\}$ along which Birkhoff's
ergodic theorem holds.
\end{remark}

\section{Besicovitch almost periodic points and eigenfunctions}
\label{s:strengthening}
In this section, we discuss a strengthened
version of mean almost periodicity viz Besicovitch almost
periodicity. We  show that pure point spectrum can also be
characterized via this strengthened version. In fact, our results
can be understood as saying that in a dynamical system with pure
point spectrum both eigenfunctions and eigenvalues can be read of
from any of its (generic) Besicovitch almost periodic points.

\bigskip

We consider a { $\sigma$-compact}, locally compact abelian group
together with a F\o lner sequence $(B_n)$. As usual the set of all
continuous group homomorphisms $\xi : G\longrightarrow \TT$ is
denoted as $\widehat{G}$ and called the \textit{dual group} of $G$.
We say that a  bounded function $f : G\longrightarrow \CC$ is
\textit{Besicovitch almost periodic} if for any $\varepsilon
>0$ there exist $k\in \NN$, $\xi_1,\ldots, \xi_k\in
\widehat{G}$ and $c_1,\ldots ,c_k\in\CC$ with
$$\overline{M} (|f - \sum_{j=1}^k c_j \xi_j|)< \varepsilon.$$
A discussion of basic properties of { uniformly continuous}
Besicovitch almost periodic functions is given in Appendix
\ref{s:a:beap}. This shows in particular that any { uniformly
continuous} Besicovitch almost periodic function is also mean almost
periodic and admits an average (see below as well). The discussion
also shows that the set of these functions forms an algebra and is
closed under uniform convergence.

\begin{definition}[Besicovitch almost periodic points]
Let $(X,G)$ be a metrizable dynamical system  and $(B_n)$ a F\o lner
sequence. Then, a point $x\in X$ is called Besicovitch almost
periodic with respect to $(B_n)$ if $\mathcal{A}_x$ consists only of
Besicovitch almost periodic functions.
\end{definition}

As in the definition of  mean almost periodicity also Besicovitch
almost periodicity  depends on the chosen F\o lner sequence. In our
subsequent discussion, however,  we will often refrain from
explicitly referring to the F\o lner sequence $(B_n)$ if it is clear
from the context which sequence is involved.

\begin{remark} (a)  To set this definition in perspective we refer to
Lemma \ref{l:char-ap-functions}. This lemma shows that a point is
mean almost periodic if and only if $\mathcal{A}_x$ consists of mean
almost periodic functions only.

 (b)  Note also that the statements of Lemma
\ref{l:char-ap-functions} remain true (with essentially the same
proof) after 'mean almost periodic' is replaced with 'Besicovitch
almost periodic'.

(c) As Besicovitch almost periodic functions are mean almost
periodic,  any Besicovitch almost periodic point is mean almost
periodic. The converse is not true. To see this consider
$X:=\{0,1\}^\ZZ$ with product topology and the shift action of
$\ZZ$. Set $B_n :=\{0,\ldots, n\}$ for $n$ even and $B_n =
\{-n,\ldots, -1\}$ for $n$ odd. Consider now $y\in X$ with $y(k) = 1
$ for $k\geq 0$ and $y(k) = 0$ otherwise. Then, it is not hard to
see that $D(y,ny) = 0$ for all $n\in\ZZ$. Hence, $y$ is mean almost
periodic. On the other hand, consider  $f: X\longrightarrow \{0,1\}$
with $f(x) = x(0)$. Clearly $f$ is continuous. Moreover,
$$a_n :=\frac{1}{|B_n|} \sum_{k\in B_n} f(k y)$$
does not converge (as $a_{2n} =1$ and $a_{2n+1} =0$ for all
$n\in\NN$). By the discussion in the appendix (see subsequent
proposition as well),  this shows that $f_y$ is not Besicovitch
almost periodic. Hence, $y$ is not Besicovitch almost periodic.
\end{remark}

Here comes a  characterization of  Besicovitch almost periodic
points via existence of means. To state it, we first introduce some
notation. For a bounded measurable function $h: G\longrightarrow \CC$, we define
the \textit{mean} or \textit{average} of $h$ with respect to $(B_n)$
by
\[
\Average(h):= \lim_{n\to \infty} \frac{1}{|B_n|} \int_{B_n} h(t)\, \dd
t
\]
whenever the limit exists.

\begin{prop}[Averaging along orbits]\label{p:average}
Let $(X,G)$ be  a dynamical system and $(B_n)$
be a F\o lner sequence. Then, the following assertions are
equivalent for $x\in X$:
\begin{itemize}
\item[(i)] The point $x$ is Besicovitch almost periodic.

\item[(ii)]
For any $f\in C(X)$ there exists a countable set $F_f\subset
\widehat{G}$  such that  the limits
\[
\Average( |f_x|^2) = \lim_{n\to \infty} \frac{1}{|B_n|} \int_{B_n}
|f(tx)|^2 \, \dd t \mbox{ and }  \Average(f_x \overline{\xi})
=\lim_{n\to \infty} \frac{1}{|B_n|} \int_{B_n} f(tx)\,
\overline{\xi(t)} \, \dd t
\]
exist for all $\xi \in F_f$  and
\[
\Average(|f_x|^2)  =\sum_{\xi \in F_f} |\Average(f_x \overline{\xi})|^2
\]
holds.
\end{itemize}
Moreover, in case that (i) and (ii) hold, $A(f\overline{\xi})$
exists and equals $0$ for   $f\in C(X)$ and $\xi \in
\widehat{G}\setminus F_f$.
\end{prop}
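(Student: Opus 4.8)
The plan is to prove the equivalence (i) $\Leftrightarrow$ (ii) together with the final assertion about vanishing averages. The key structural fact I would rely on is the discussion promised in Appendix \ref{s:a:beap}: a uniformly continuous Besicovitch almost periodic function $h$ on $G$ admits a mean $\Average(h)$, admits a mean $\Average(h\overline{\xi})$ for every $\xi \in \widehat{G}$, these ``Fourier--Bohr coefficients'' vanish for all but countably many $\xi$, and a Parseval-type identity $\Average(|h|^2) = \sum_{\xi} |\Average(h\overline{\xi})|^2$ holds. I would treat this Parseval machinery as the essential black box delivered by the appendix.

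For the direction (i) $\Rightarrow$ (ii), suppose $x$ is Besicovitch almost periodic, so that every $f_x$ with $f \in C(X)$ is Besicovitch almost periodic. Fix $f \in C(X)$. Since $f_x$ is uniformly continuous (it is the sampling of a continuous function on the compact orbit closure along a continuous action) and Besicovitch almost periodic, the appendix gives existence of all the relevant averages, the countability of the set $F_f := \{\xi \in \widehat{G} : \Average(f_x\overline{\xi}) \neq 0\}$, and the Parseval identity. The only subtlety is that $|f_x|^2 = |f|_x^2$ is itself of the form $g_x$ for $g = |f|^2 \in C(X)$, so $\Average(|f_x|^2)$ exists for the same reason; this yields (ii) directly, and simultaneously gives the final assertion, since by construction $\Average(f_x\overline{\xi}) = 0$ for $\xi \notin F_f$.

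For the converse (ii) $\Rightarrow$ (i), I would argue that the Parseval identity forces $f_x$ to be approximable in the $\overline{M}(|\cdot|)$-seminorm by trigonometric polynomials. Concretely, set $P_N := \sum_{\xi \in F_f^{(N)}} \Average(f_x\overline{\xi})\,\xi$ where $F_f^{(N)}$ is a finite subset exhausting $F_f$. Expanding $\overline{M}(|f_x - P_N|)$ (or better $\Average(|f_x - P_N|^2)$, which one computes by linearity of $\Average$ using that the $\xi$ are characters and $\Average(\xi\overline{\eta}) = \delta_{\xi,\eta}$) gives $\Average(|f_x - P_N|^2) = \Average(|f_x|^2) - \sum_{\xi \in F_f^{(N)}} |\Average(f_x\overline{\xi})|^2$, which tends to $0$ as $N \to \infty$ by the Parseval hypothesis. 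The Cauchy--Schwarz inequality for the seminorm $\overline{M}$ then bounds $\overline{M}(|f_x - P_N|) \le (\overline{M}(|f_x - P_N|^2))^{1/2}$, so $f_x$ is Besicovitch almost periodic. Since $f$ was arbitrary, $x$ is Besicovitch almost periodic.

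The main obstacle I anticipate is justifying the orthogonality computation $\Average(\xi\overline{\eta}) = \delta_{\xi,\eta}$ and the linearity of $\Average$ along a general F\o lner sequence, together with the interchange of $\Average$ with the finite sum defining $P_N$. This requires that the relevant averages all genuinely exist along $(B_n)$ and not merely as upper means; I would handle it by noting that $\Average(\xi\overline{\eta})$ for characters is a standard F\o lner-averaging fact ($\xi\overline{\eta} = (\xi - \eta)$ is a character, and the average of a nontrivial character against a F\o lner sequence is $0$ while that of the trivial character is $1$), and that finite linear combinations of functions whose averages exist again have existing averages, so $\Average(|f_x - P_N|^2)$ is well-defined and the displayed identity holds. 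The hypothesis (ii) provides exactly the existence of $\Average(|f_x|^2)$ and of each $\Average(f_x\overline{\xi})$ needed to make every term in this expansion meaningful, so no circularity arises.
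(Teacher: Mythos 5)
Your proof is correct and follows essentially the same route as the paper: the paper's own proof is a one-line reduction to Proposition \ref{p:fourier-expansion} in Appendix \ref{s:a:beap} (applied to the uniformly continuous bounded functions $f_x$, $f\in C(X)$), and your argument is exactly that reduction, with the backward direction spelled out via the same Parseval expansion, character orthogonality under F\o lner averaging, and Cauchy--Schwarz estimate $\overline{M}(|f_x-P_N|)\le\bigl(\overline{M}(|f_x-P_N|^2)\bigr)^{1/2}$ that constitute the appendix's proof of that proposition.
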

\begin{proof} This is a  direct consequence of the definition of Besicovitch almost periodicity
of a point and Proposition \ref{p:fourier-expansion} in Appendix
\ref{s:a:beap}.
\end{proof}

\begin{definition}[Frequency] Let $(X,G)$ be a dynamical system and $x\in X$ a
Besicovitch almost periodic point. Then, every  $\xi \in\widehat{G}$
with $\Average( f_x\overline{\xi}) \neq 0$ for some $f\in C(X)$ is
called a frequency of $x$. The set of all frequencies of $x$ is
denoted by $\Freq(x)$.
\end{definition}

Here is the first main result of this section.  It shows that any
Besicovitch almost periodic point completely determines a dynamical
system with pure point spectrum.

\begin{theorem}\label{t:converse}
Let $(X,G)$ be a dynamical system, $(B_n)$ a F\o lner sequence and
$p\in X$  a Besicovitch almost periodic point. Then, there exists a
(unique) ergodic probability measure $m$ on $X$ such that $p$ is
generic with respect to $m$. The  dynamical system $(X,G,m)$ has
pure point spectrum and $\Eig(X,G,m) =\Freq(p)$ holds. To each
eigenvalue $\xi \in \Eig(X,G,m)$ there exists a (unique)
eigenfunction $e_\xi\in L^2 (X,m)$ with
\[
\int_X f\, \overline{e_\xi}\, \dd m = \Average( f_p \overline{\xi})
\]
for all $f\in C(X)$.
\end{theorem}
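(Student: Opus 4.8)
The plan is to read off everything from the mean $\Average(\cdot)$ applied to samples along the orbit of $p$, using the existence of averages guaranteed by Besicovitch almost periodicity. First I would build the measure. Since $p$ is Besicovitch almost periodic, each $f_p$ with $f\in C(X)$ is a uniformly continuous Besicovitch almost periodic function and hence admits an average; thus $f\mapsto \Average(f_p)$ is a well-defined linear functional on $C(X)$ that is positive and sends the constant function $1$ to $1$. By the Riesz representation theorem it is integration against a unique probability measure $m$, i.e. $\int_X f\,\dd m=\Average(f_p)$ for all $f\in C(X)$. Translation invariance of the mean along the F\o lner sequence gives $\Average((f(s\cdot))_p)=\Average(f_p(\cdot+s))=\Average(f_p)$, so $m$ is $G$-invariant, and the defining identity says precisely that $p$ is $m$-generic. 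Uniqueness of $m$ is immediate: any measure $m'$ for which $p$ is generic satisfies $\int f\,\dd m'=\Average(f_p)=\int f\,\dd m$, whence $m'=m$.

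Next I would construct the eigenfunctions. For $\xi\in\widehat G$ consider the functional $f\mapsto \Average(f_p\overline\xi)$ on $C(X)$. Cauchy--Schwarz for the mean together with genericity gives $|\Average(f_p\overline\xi)|\le \Average(|f_p|^2)^{1/2}=\|f\|_{L^2(X,m)}$, so this functional is bounded for the $L^2$-norm; as $C(X)$ is dense in $L^2(X,m)$, the Hilbert space Riesz theorem produces a unique $e_\xi\in L^2(X,m)$ with $\int_X f\,\overline{e_\xi}\,\dd m=\Average(f_p\overline\xi)$ for all $f\in C(X)$, which is the asserted formula. A short computation using invariance of $m$ and translation invariance of the mean---replacing $f$ by $f((-s)\cdot)$ and using $\xi(t+s)=\xi(t)\xi(s)$---shows $\langle f,e_\xi(s\cdot)\rangle=\overline{\xi(s)}\langle f,e_\xi\rangle$ for all $f\in C(X)$, i.e. $e_\xi(s\cdot)=\xi(s)e_\xi$ in $L^2$. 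Thus $e_\xi$ is an eigenfunction to $\xi$ whenever $e_\xi\neq0$, and $e_\xi\neq0$ holds exactly when $\xi\in\Freq(p)$; in particular $\Freq(p)\subseteq\Eig(X,G,m)$.

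It remains to see that the $e_\xi$, $\xi\in\Freq(p)$, form an orthonormal basis. Eigenfunctions to distinct eigenvalues are orthogonal, and the Cauchy--Schwarz bound above gives $\|e_\xi\|\le1$. On the other hand Proposition \ref{p:average} yields, for every $f\in C(X)$, the Parseval-type identity $\|f\|_{L^2}^2=\Average(|f_p|^2)=\sum_{\xi}|\Average(f_p\overline\xi)|^2=\sum_\xi|\langle f,e_\xi\rangle|^2$. Comparing this with Bessel's inequality $\sum_\xi \|e_\xi\|^{-2}|\langle f,e_\xi\rangle|^2\le\|f\|^2$ and using $\|e_\xi\|\le1$ squeezes every inequality to an equality, forcing $\|e_\xi\|=1$ for each $\xi\in\Freq(p)$ (pick $f$ with $\langle f,e_\xi\rangle\neq0$). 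Hence $\{e_\xi:\xi\in\Freq(p)\}$ is an orthonormal system for which Parseval holds on the dense subspace $C(X)$, so it is a complete orthonormal system of eigenfunctions and $(X,G,m)$ has pure point spectrum. Completeness also gives $\Eig(X,G,m)=\Freq(p)$ (an eigenfunction to $\eta\notin\Freq(p)$ would be orthogonal to all $e_\xi$, hence zero) and shows each eigenspace is one-dimensional; in particular the eigenspace of the trivial character is spanned by the constant $e_{1}=1$, so the only invariant $L^2$-functions are the constants and $m$ is ergodic.

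I expect the delicate point to be the normalization and completeness step: everything hinges on turning the quantitative Besicovitch identity of Proposition \ref{p:average} into orthonormality and completeness of the $e_\xi$. The two-sided squeeze---Cauchy--Schwarz for the mean giving $\|e_\xi\|\le1$ and the Parseval identity forcing equality and completeness---is where the full strength of Besicovitch (as opposed to merely mean) almost periodicity is used; the rest is bookkeeping with the translation invariance and positivity of $\Average$. Pure point spectrum alone could also be deduced more cheaply from Theorem \ref{t:main}, since $p$ is generic and, being Besicovitch, mean almost periodic; but the direct construction is what yields the explicit eigenfunctions, the identity $\Eig(X,G,m)=\Freq(p)$, and ergodicity.
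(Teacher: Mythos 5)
Your proof is correct and follows essentially the same route as the paper's: the Riesz representation theorem applied to $f\mapsto \Average(f_p)$ to build $m$, the Hilbert-space Riesz theorem applied to $\Phi_\xi(f)=\Average(f_p\overline{\xi})$ to build $e_\xi$, the equivariance computation via invariance of $\Average$ and $m$, and the Parseval identity of Proposition \ref{p:average} played against Bessel's inequality to force orthonormality, completeness, $\Eig(X,G,m)=\Freq(p)$, one-dimensional eigenspaces and hence ergodicity. Your explicit $\|e_\xi\|^{-2}$-weighted Bessel squeeze merely spells out the step the paper compresses into ``this is only possible if $\|e_\xi\|=1$ and $(e_\xi)$ form an orthonormal basis.''
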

\begin{proof}
Obviously, the map
\[
\Phi : C(X)\longrightarrow \CC,\qquad f\mapsto \Average(f_p),
\]
is linear and positive (i.e. $\Average(f_p)\geq 0$ for $f\geq0$).
Hence, there exist a unique measure $m$ on $X$ with $\Phi (f) =
\int_X f\, \dd m$. Clearly, $m(X) = \int_X 1\, \dd m = \Average(1) =
1$. By the F\o lner property of $(B_n)$,   the mean $\Average$ is
invariant and so is then $\Phi$. This easily gives that $m$ is
invariant. So, $m$ is an invariant probability measure.

We now turn to the construction of the eigenfunctions. For $\xi\in
\widehat{G}$ consider the map
\[
\Phi_\xi : C(X)\longrightarrow \CC,\qquad f\mapsto
\Average(f_p\overline{\xi}).
\]
This map is obviously linear and
defined on a dense subspace of $L^2 (X,m)$. By Cauchy-Schwarz
inequality, $\Average(1) =1$ and the construction of $m$ we find
\[
|\Phi_\xi (f)|^2 =|\Average(f_p \overline{\xi})|^2 \leq
\Average(|f_p|^2) \Average(1) = \int_X |f|^2\, \dd m.
\]
Hence, $\Phi_\xi$ can be extended to a linear
continuous map, again denoted by $\Phi_\xi$, on the whole $L^2
(X,m)$.  By Riesz Lemma there exists then  an $e_\xi\in L^2 (X,m)$
with $\|e_\xi\|\leq 1$ and
\[
\Phi_\xi(f) = \int_X f\, \overline{e_\xi}\, \dd m
\]
for all $f\in C(X)$. Define
\[
E:=\{\xi\in \widehat{G} : e_\xi \neq 0\}.
\]
By construction $\xi \in\widehat{G}$ belongs to $E$ if and only if
there exists an $f\in C(X)$ with $\Average(f_p \overline{\xi}) \neq
0$. Hence, $E= \Freq(p)$ holds. In particular, we have $\Average(f_p
\overline{\varrho})=0$ for all $f\in C(X)$ and $\varrho \in
\widehat{G}\setminus E$. A short computation shows for $t\in G$
\begin{eqnarray*}
\xi(-t) \int_X f\,\overline{ e_\xi}\, \dd m &=& \xi(-t) \Average(
f_p
\overline{\xi})\\
&=& \Average(f_p \overline{\xi(t +\cdot )})\\
(\mbox{$\Average$ invariant})\;\: &=& \Average(f_p(-t\cdot) \overline{\xi})\\
(\mbox{construction of $m$})\;\: &=& \int_X f(-t\cdot)\, \overline{e_\xi}\, \dd m\\
(\mbox{$m$ invariant})\;\: &=& \int_X f\, \overline{
e_\xi(t\cdot)}\, \dd m
\end{eqnarray*}
for all $f\in C(X)$. As these $f$ are dense in $L^2(X,m)$ this gives
$$ e_\xi (t\cdot)  = \xi(t) e_\xi$$
for all $t\in G$. This shows that $e_\xi$ is an eigenfunction (to
$\xi$) for each $\xi\in E$. Clearly, eigenfunctions to different
eigenvalues are orthogonal.

Next, we show that the $e_\xi$, $\xi \in E$,  are normalized and
form a basis. This gives that $E=\Eig(X,G,m)$ and together with the
already shown  $E = \Freq(p)$,  this will  then also imply
$\Eig(X,G,m) = \Freq(p)$.

By Parseval inequality, the definition of $m$ and Proposition
\ref{p:average}, we  have the following:
\begin{eqnarray*}
\sum_{\xi\in E} \left| \int_X f\, \overline{e_\xi}\, \dd m\right|^2  &\leq & \int_X |f|^2\, \dd m\\
&=& \Average(|f_p|^2)\\
(\mbox{Proposition \ref{p:average}})\;\: &=& \sum_{\xi\in
\widehat{G}}
|\Average(f_p\overline{\xi})|^2\\
(\mbox{construction of $E$})\;\: &=& \sum_{\xi\in E}
|\Average(f_p\overline{\xi})|^2\\
(\mbox{construction of $e_\xi$})\;\: &=& \sum_{\xi\in E}
\left|\int_X f\,\overline{e_\xi}\, \dd m\right|^2.
\end{eqnarray*}
This shows
\[
\sum_{\xi\in E} \left| \int_X f\, \overline{e_\xi}\, \dd m\right|^2
= \int_X |f|^2\, \dd m
\]
for all $f\in C(X)$. This is only
possible if $\|e_\xi\| =1$ for all $\xi\in E$ and $(e_\xi)$ form an
orthonormal basis of $L^2 (X,m)$.

\smallskip

It remains to show ergodicity: For each eigenvalue $\xi \in E$ we
have  constructed an eigenfunction $e_\xi$  and we have shown that
these form a complete set (i.e. $e_\xi$, $ \xi \in E$, is an
orthonormal basis). Hence (as different of these eigenfunctions
belong to different eigenspaces) each eigenspace is one-dimensional.
In particular the eigenspace to the eigenvalue $1$ is one
dimensional and the system is ergodic.
\end{proof}

\begin{remark}  Note that the proof relies (and only relies) on the
characterizing properties of Besicovitch almost periodic points
given in Proposition \ref{p:average}.
\end{remark}

The previous theorem shows that any Besicovitch almost periodic
point is generic with respect to a (uniquely determined) measure. It
may well be that different Besicovitch almost periodic points are
generic with respect to different measures. Consider e.g. the full
shift $X=\{0,1\}^{\ZZ}$ with $T x (n) = x(n+1)$. Then, any periodic
element of $X$ is Besicovitch almost periodic. Clearly, elements
with different periods will not be generic with respect to the same
measure. This motivates the following definition.

\begin{definition}[Generic Besicovitch almost periodic points]
Let $(X,G)$ be a dynamical system and $(B_n)$ a F\o lner sequence on
$G$. Then, for any invariant probability measure $m$ on $(X,G)$ we
denote by $\Bap (X,G,m)$ the set of those Besicovitch almost
periodic points which are generic with respect to $m$.
\end{definition}

We have the following consequence of the preceding theorem.

\begin{prop}
Let $(X,G,m)$ be a dynamical system and $(B_n)$ be a F\o lner
sequence. Then, $\Bap (X,G,m)$ is a measurable invariant set.
\end{prop}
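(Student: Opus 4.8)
The plan is to write $\Bap(X,G,m)$ as the intersection of the set of $m$-generic points with a suitable ``Besicovitch'' set, and to treat invariance and measurability separately.

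\emph{Invariance.} Here I would exploit the translation structure. For $s\in G$ and $f\in C(X)$ one has $f_{sx}(t)=f((t+s)x)=f_x(t+s)$, so $f_{sx}$ is the translate $f_x(\cdot+s)$. The F\o lner property makes $\overline{M}$, and hence the seminorm $\overline{M}(|\cdot|)$ and the average $\Average$, translation invariant; applied to a character it gives $\xi(t+s)=\xi(s)\xi(t)$. Thus if $\overline{M}(|f_x-\sum_j c_j\xi_j|)<\varepsilon$ then $\overline{M}(|f_{sx}-\sum_j(c_j\xi_j(s))\xi_j|)<\varepsilon$, so Besicovitch almost periodicity of a point passes to its translates. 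Genericity is invariant for the same reason (translating each $B_n$ by $s$ leaves the limit unchanged), as already noted in Section~\ref{s:mean}. Hence $\Bap(X,G,m)$ is invariant.

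\emph{Reduction for measurability.} Set $E:=\Eig(X,G,m)\subset\widehat G$, which is countable, and fix a $\|\cdot\|_\infty$-dense countable set $\{f_k\}\subset C(X)$. The crucial observation is that any point $x$ which is generic for $m$ \emph{and} Besicovitch almost periodic has $\Freq(x)=E$: Theorem~\ref{t:converse} gives $\Freq(x)=\Eig(X,G,m_x)$ for the measure $m_x$ making $x$ generic, and genericity for $m$ forces $m_x=m$ by Riesz uniqueness (both measures realize $f\mapsto\Average(f_x)$). Consequently, by the Fourier--Bohr expansion underlying Proposition~\ref{p:average}, each $f_x$ is approximated in $\overline{M}(|\cdot|)$ by trigonometric polynomials whose frequencies all lie in the fixed countable set $E$. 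This is precisely what lets me avoid the uncountable dual $\widehat G$.

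\emph{Measurability proper.} Let $\mathcal{T}_E$ be the family of trigonometric polynomials with frequencies in $E$ and coefficients in $\mathbb{Q}+i\mathbb{Q}$; this family is countable. For a fixed $P\in\mathcal{T}_E$, fixed $f$ and fixed $n$, the map $x\mapsto \tfrac{1}{|B_n|}\int_{B_n}|f(tx)-P(t)|\dd t$ is continuous in $x$ (continuity of the action together with dominated convergence on the relatively compact $B_n$); hence $x\mapsto\overline{M}(|f_x-P|)$ is a $\limsup$ over $n$ of continuous functions and so is Borel measurable. Since $c\mapsto\overline{M}(|f_x-\sum c_\xi\xi|)$ is Lipschitz in the coefficients (as $|\xi|=1$), restricting coefficients to $\mathbb{Q}+i\mathbb{Q}$ does not change the relevant infimum, so $x\mapsto\inf_{P\in\mathcal{T}_E}\overline{M}(|f_x-P|)$ is a countable infimum of measurable functions and therefore measurable. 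Thus
$$
B_E:=\bigcap_k \Big\{x\in X : \inf_{P\in\mathcal{T}_E}\overline{M}\big(|f_{k,x}-P|\big)=0\Big\}
$$
is measurable. I would then check $\Bap(X,G,m)=\mathrm{Gen}\cap B_E$, where $\mathrm{Gen}$ is the (measurable) set of $m$-generic points: the inclusion ``$\subseteq$'' uses $\Freq(x)=E$ and the Fourier--Bohr expansion, while ``$\supseteq$'' uses that approximability by $\mathcal{T}_E\subset\{\text{all trig.\ polynomials}\}$ makes each $f_{k,x}$ Besicovitch almost periodic, whence by density of $\{f_k\}$ and closedness of the Besicovitch class under uniform limits every $f_x$ is Besicovitch almost periodic. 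As both $\mathrm{Gen}$ and $B_E$ are measurable, so is $\Bap(X,G,m)$.

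I expect the main obstacle to be the measurability argument, specifically the handling of the uncountable dual group: one cannot approximate arbitrary characters by a countable dense subset, since proximity in $\widehat G$ does not control the seminorm $\overline{M}(|\cdot|)$. The decisive step that removes this difficulty is pinning every relevant frequency to the single fixed countable set $E=\Eig(X,G,m)$ through Theorem~\ref{t:converse}; after that, the continuity and infimum arguments are routine.
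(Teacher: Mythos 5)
Your proof is correct, and it rests on the same pivotal insight as the paper's: use Theorem \ref{t:converse} (plus Riesz uniqueness of the representing measure) to pin all relevant frequencies to the single countable set $E=\Eig(X,G,m)$, and then reduce membership in $\Bap(X,G,m)$ to countably many measurable conditions indexed by a countable dense subset of $C(X)$. Where you genuinely differ is in which countable conditions you use. The paper characterizes $p\in\Bap(X,G,m)$ by existence of the means $\Average(f_p)$ (with value $\int_X f\,\dd m$) and $\Average(f_p\overline{\xi})$ for $\xi\in E$, together with the Parseval identity $\Average(|f_p|^2)=\sum_{\xi\in E}|\Average(f_p\overline{\xi})|^2$; the delicate direction of that characterization (the three conditions imply Besicovitch almost periodicity) is handled by re-running the eigenfunction construction from the proof of Theorem \ref{t:converse}. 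You instead decompose $\Bap(X,G,m)=\mathrm{Gen}\cap B_E$, with $B_E$ defined through vanishing of the infimum of $\overline{M}(|f_{k,x}-P|)$ over the countable family of rational-coefficient trigonometric polynomials with frequencies in $E$. This trades difficulty between the two inclusions: your ``$\supseteq$'' is more elementary than the paper's converse, since approximability by trigonometric polynomials is Besicovitch almost periodicity by definition and uniform closedness of that class (Lemma \ref{l:bap-algebra}) upgrades from the dense family $\{f_k\}$ to all of $C(X)$; on the other hand your ``$\subseteq$'' carries the load the paper's forward direction avoids, as it needs the Fourier--Bohr expansion behind Proposition \ref{p:fourier-expansion} to ensure the approximants can be taken with frequencies in $\Freq(x)=E$, plus the Lipschitz-in-coefficients observation to pass to rational coefficients. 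Both routes then finish with the same routine fact that limsups of continuous functions are Borel. A minor bonus of your formulation is that the case $\Bap(X,G,m)=\varnothing$ needs no separate discussion, since your inclusion ``$\supseteq$'' holds unconditionally.
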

\begin{proof} Clearly,  $\Bap (X,G,m)$ is invariant as both the set of generic points and
the set of Besicovitch almost periodic points are invariant. If
$\Bap (X,G,m)$ is empty there is nothing left to show. So, consider
the case  $\Bap(X,G,m)\neq\varnothing$. By Theorem \ref{t:converse},
the dynamical system $(X,G,m)$ then has pure point spectrum and the
set of its eigenvalues  $\Eig(X,G,m)$ equals $\Freq(p)$ for any
$p\in \Bap(X,G,m)$. Set $E:= \Eig(X,G,m)$.

\smallskip

\textit{Claim.} Let $D$ be a dense subset of $C(X)$. Then, we have
$p\in \Bap(X,G,m)$ if and only if the following three points hold:
\begin{itemize}
\item $A(f_p)$ exists and equals $\int_X f\, \dd m$ for all $f\in D$.
\item $A(f_p \overline{\xi})$ exists for all $\xi \in E$ and $f\in D$.
\item $A(|f_p|^2) = \sum_{\xi \in E} |A(f_p\overline{\xi})|^2$ for
all $f\in D$.
\end{itemize}
\textit{Proof of claim.} Consider $p\in \Bap (X,G,m)$. Then, $p$ is
generic and the first point holds (even for all $f\in C(X)$). In
particular, $A(|f_p|^2)$ exists. Now, the second and third point
follow from  Proposition \ref{p:average} as $p$ is Besicovitch
almost periodic with set of frequencies given by $E$.

\smallskip

Consider now a $p\in X$ satisfying the three points above. By
density of $D$ in $C(X)$ we  then easily infer that $A(f_p) = \int_X
f\, \dd m$ holds for all $f\in C(X)$ and $A(f_p \overline{\xi})$
exists for all $f\in C(X)$ and  $\xi \in E$. In particular, we have
$A(|f_p|^2) = \int |f|^2\, \dd m$ for all $f\in C(X)$. Given this,
we can now follow the proof of Theorem \ref{t:converse} to conclude
the existence of (pairwise orthogonal) eigenfunctions  $e_\xi$ to
$\xi \in \Eig(X,G,m)$ with $\|e_\xi\|\leq 1$ and
\[
\sum_{\xi\in E} \left| \int_X f\, \overline{e_\xi}\, \dd m\right|^2
= \int_X |f|^2\, \dd m
\]
for all $f\in D$. As
$D$ is dense, this is only possible if $\|e_\xi\|=1$ holds for all
$\xi \in E$ and the $e_\xi$, $\xi \in E$, are an orthonormal basis.
This finishes the proof of the claim.

\medskip

Given the claim, the desired measurability follows easily: By
compactness and metrizability  of $X$ we can choose a countable dense
subset $D$ of $C(X)$. Then, the claim gives that $p\in X$ belongs to
$\Bap (X,G,m)$ if countably many conditions are satisfied. Clearly,
each of these conditions gives a measurable set.
\end{proof}

The following theorem can be seen as both a converse to Theorem
\ref{t:converse}  and an analogue to Theorem \ref{t:main}.

\begin{theorem}[Discrete spectrum via Besicovitch almost periodic
points]\label{t:main:bap} Let $(X,G,m)$ be  an ergodic dynamical
system and $(B_n)$ a F\o lner sequence along which Birkhoff's
ergodic theorem holds. Then, the following assertions are
equivalent:
\begin{itemize}
\item[(i)] The dynamical system $(X,G,m)$ has pure point spectrum.
\item[(ii)] $m(\Bap(X,G,m)) =1$.
\item[(iii)] $\Bap (X,G,m)\neq \varnothing$.
\end{itemize}
If one of the equivalent conditions (i), (ii) and (iii) holds, then
 $\Eig(X,G,m) = \Freq(x)$ for every $x\in \Bap(X,G,m)$.
Moreover, in this case, for any $f\in C(X)$ and $\xi \in
\widehat{G}$ the function
\begin{equation*}
e_{f,\xi} : X\longrightarrow \CC, \qquad e_{f,\xi}(x)
:=\left\{\begin{array}{r@{\quad:\quad}l}
 \Average(f_x \overline{\xi})
  & x\in \Bap (X,G,m)\\
 0 & \mbox{ else }
 \end{array}\right.,
\end{equation*}
satisfies  $P_\xi f = e_{f,\xi}$ (in $L^2 (X,m)$),  $e_{f,\xi} (tx)
= \xi(t) e_{f,\xi}(x)$ for all $t\in G$ and $x\in X$ and has
constant modulus on $\Bap (X,G,m)$.
\end{theorem}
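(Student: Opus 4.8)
The plan is to establish the cycle (iii)$\Rightarrow$(i)$\Rightarrow$(ii)$\Rightarrow$(iii) and then read off the ``moreover'' part from the computations made along the way. The implication (ii)$\Rightarrow$(iii) is immediate, since $m$ is a probability measure. For (iii)$\Rightarrow$(i) I would pick $p\in\Bap(X,G,m)$ and invoke Theorem \ref{t:converse}: it produces an ergodic measure with pure point spectrum for which $p$ is generic. Since a point is generic for at most one measure (the relation $\Average(f_p)=\int_X f\,\dd m$ determines $m$) and $p$ is generic for $m$ by definition of $\Bap(X,G,m)$, that measure must be $m$; hence $(X,G,m)$ has pure point spectrum. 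The same application of Theorem \ref{t:converse} to each $p=x\in\Bap(X,G,m)$ gives $\Eig(X,G,m)=\Freq(x)$, settling the first assertion of the ``moreover'' part.

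The heart of the matter is (i)$\Rightarrow$(ii). Here I would use that, the system being ergodic with pure point spectrum, the eigenvalue set $E:=\Eig(X,G,m)$ is a countable group, every eigenspace is one-dimensional, and for each $\xi\in E$ there is an eigenfunction $u_\xi$ with $|u_\xi|=1$ almost everywhere; these form an orthonormal basis. The key algebraic observation is that, wherever $|u_\xi(x)|=1$, the relation $u_\xi(tx)=\xi(t)u_\xi(x)$ rewrites the twisted sample as
\[
f(tx)\,\overline{\xi(t)}=(f\overline{u_\xi})(tx)\,u_\xi(x).
\]
Applying Birkhoff's theorem along $(B_n)$ to the integrable function $f\overline{u_\xi}$ then yields, for almost every $x$,
\[
\Average(f_x\overline{\xi})=u_\xi(x)\int_X f\overline{u_\xi}\,\dd m=u_\xi(x)\langle f,u_\xi\rangle=(P_\xi f)(x).
\]
Fixing a countable dense $D\subset C(X)$ and intersecting the (countably many) full-measure sets on which Birkhoff holds for $f$, $|f|^2$ and $f\overline{u_\xi}$ (for $f\in D$, $\xi\in E$) and on which $|u_\xi|=1$ (for $\xi\in E$), I obtain a full-measure set of points $x$ for which $\Average(f_x)=\int_X f\,\dd m$ holds (genericity), $\Average(f_x\overline{\xi})$ exists for every $\xi\in E$, and, by Parseval in $L^2$ together with $|u_\xi(x)|=1$,
\[
\Average(|f_x|^2)=\int_X|f|^2\,\dd m=\sum_{\xi\in E}|\langle f,u_\xi\rangle|^2=\sum_{\xi\in E}|\Average(f_x\overline{\xi})|^2 .
\]
By the claim established in the proof of the preceding proposition these are exactly the conditions forcing $x\in\Bap(X,G,m)$, so $m(\Bap(X,G,m))=1$.

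For the remaining ``moreover'' assertions I would argue as follows. The displayed identity $\Average(f_x\overline{\xi})=(P_\xi f)(x)$ holds almost everywhere for each fixed $f\in C(X)$ and $\xi\in E$, which is precisely $P_\xi f=e_{f,\xi}$ in $L^2(X,m)$; for $\xi\notin E$ both sides vanish (using $\Freq(x)=E$ on $\Bap(X,G,m)$), giving the $L^2$ equality in general. The equivariance $e_{f,\xi}(tx)=\xi(t)e_{f,\xi}(x)$ I would prove \emph{everywhere}: for $x\in\Bap(X,G,m)$ the substitution $u=t+s$ together with $\overline{\xi(u-t)}=\xi(t)\overline{\xi(u)}$ and the F\o lner property $|B_n\,\triangle\,(t+B_n)|/|B_n|\to0$ (the integrand being bounded) shows that the average defining $e_{f,\xi}(tx)$ equals $\xi(t)$ times the one defining $e_{f,\xi}(x)$, while for $x\notin\Bap(X,G,m)$ both sides are $0$ since $\Bap(X,G,m)$ is invariant. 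Finally, constant modulus on $\Bap(X,G,m)$ I would obtain directly from Theorem \ref{t:converse}: for each $x\in\Bap(X,G,m)$ that theorem realises $f\mapsto\Average(f_x\overline{\xi})$ as $\langle f,e_\xi^{(x)}\rangle$ for a \emph{normalised} eigenfunction $e_\xi^{(x)}$; since the $\xi$-eigenspace is one-dimensional, $e_\xi^{(x)}=\lambda_x u_\xi$ with $|\lambda_x|=1$, whence $|e_{f,\xi}(x)|=|\langle f,u_\xi\rangle|$ is independent of $x\in\Bap(X,G,m)$ (and equals $0$ when $\xi\notin E$).

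The step I expect to require the most care is the passage from almost-everywhere to everywhere statements on $\Bap(X,G,m)$: the $L^2$ eigenfunctions $u_\xi$ have no canonical pointwise values, so the equivariance and constant-modulus properties cannot be read off from the a.e.\ identity $e_{f,\xi}=P_\xi f$ alone. The remedy, as indicated, is to argue equivariance intrinsically from the F\o lner property and constant modulus from the one-dimensionality of eigenspaces via Theorem \ref{t:converse}, both of which bypass any choice of representative for $u_\xi$.
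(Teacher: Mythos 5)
Your proposal is correct and its global architecture matches the paper's: (ii)$\Rightarrow$(iii) is trivial, (iii)$\Rightarrow$(i) and $\Eig(X,G,m)=\Freq(x)$ come from Theorem \ref{t:converse} plus uniqueness of the generic measure, and the constant-modulus statement is obtained exactly as in the paper (Theorem \ref{t:converse} gives normalised eigenfunctions $e_\xi^{(x)}$, and one-dimensionality of eigenspaces forces them to agree up to a unimodular factor). Where you genuinely differ is in (i)$\Rightarrow$(ii) and in the identification $P_\xi f=e_{f,\xi}$. The paper first replaces each eigenfunction by a representative satisfying $e_\xi(sx)=\xi(s)e_\xi(x)$ \emph{everywhere} (via twisted orbit averages), then applies Birkhoff to the single $L^1$ function $|f-\sum_{j\le k}\langle f,e_{\xi_j}\rangle e_{\xi_j}|$: along the orbit of a typical $x$ the partial sum becomes the trigonometric polynomial $\sum_j\langle f,e_{\xi_j}\rangle e_{\xi_j}(x)\xi_j(\cdot)$, and Cauchy--Schwarz bounds the mean error by $\|f-\sum_j\langle f,e_{\xi_j}\rangle e_{\xi_j}\|_2$, giving Besicovitch almost periodicity directly. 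You instead take unimodular eigenfunctions $u_\xi$, apply Birkhoff to the twisted functions $f\overline{u_\xi}$ to get $\Average(f_x\overline{\xi})=u_\xi(x)\langle f,u_\xi\rangle$ a.e., verify the three conditions of the Claim inside the proposition on measurability of $\Bap(X,G,m)$, and invoke that Claim. This buys a self-contained proof of $P_\xi f=e_{f,\xi}$ (which the paper outsources to \cite{Len0}) and an explicit F\o lner proof of the everywhere-equivariance of $e_{f,\xi}$ (which the paper leaves implicit). One point of care: the Claim is formally established in the paper under the standing assumption $\Bap(X,G,m)\neq\varnothing$, which you do not yet know; you should either observe that the ``if'' direction of the Claim never uses that assumption, or bypass the Claim by extending your three identities from the countable dense set $D$ to all of $C(X)$ by density and concluding via Proposition \ref{p:average}.

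The one step that needs repair is your ``key algebraic observation''. As written it treats $u_\xi(tx)=\xi(t)u_\xi(x)$ as a pointwise identity valid at every $x$ with $|u_\xi(x)|=1$; but an $L^2$ eigenfunction satisfies this relation only for $m$-a.e.\ $x$ at each \emph{fixed} $t$, the exceptional set depends on $t$, and unimodularity at $x$ does not license the identity at a given pair $(t,x)$. Since you only use the identity under the integral $\frac{1}{|B_n|}\int_{B_n}\cdots \dd t$, the standard fix is Fubini: the jointly measurable function $(t,x)\mapsto |u_\xi(tx)-\xi(t)u_\xi(x)|$ vanishes a.e.\ on $K\times X$ for every compact $K\subset G$, hence for a.e.\ $x$ the relation holds for Haar-a.e.\ $t$, which suffices for your Birkhoff computation. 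This pointwise difficulty is precisely what the paper's ``everywhere-equivariant representative'' trick is designed to avoid; with either repair your argument goes through.
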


\begin{remark} By Theorem \ref{t:converse}, the existence of a generic
Besicovitch almost periodic point entails the ergodicity of $m$. For
this reason the ergodicity assumption in the above theorem can not
be dropped.
\end{remark}

\begin{proof} The implication (ii)$\Longrightarrow$(iii) is
clear.  The implication (iii)$\Longrightarrow$(i) follows from
Theorem \ref{t:converse}. We now  show (i)$\Longrightarrow$(ii). As
the set of generic points has full measure, it suffices to show that
almost every $x\in X$ is Besicovitch almost periodic. To do so, we
denote the inner product on $L^2 (X,m)$ by
$\langle\cdot,\cdot\rangle$ and the associated norm by
$\|\cdot\|_2$. Let $\xi_1,\xi_2,\xi_3,\ldots, $ be an enumeration of
$\Eig(X,G,m)$.  Choose for any $\xi\in E$ a normalized eigenfunction
$e_\xi : X\longrightarrow \CC$. Without loss of generality, we can
assume
\[
e_\xi (sx) = \xi (s)\, e_\xi (x)
\]
for all $s\in G$ and $x\in X$. (Otherwise we could replace $e_\xi$
by $\widetilde{e}_\xi$ defined by
\[
\widetilde{e}_\xi(x) :=\lim_{n\to \infty} \frac{1}{|B_n|} \int_{B_n}
e(sx)\, \overline{\xi}(s)\, \dd s
\]
if the limit exists and
$\widetilde{e}_\xi(x) =0$ else.) Moreover, for $\xi = 1$ we choose
the constant function $1$.

Consider now an arbitrary  $g\in C(X)$. By  Birkhoff's ergodic
theorem, we can then  find a subset $X_g$ of $X$ of full measure
such that
\[
\int_X \left|g - \sum_{j=1}^k \langle g,e_{\xi_j} \rangle
e_{\xi_j}\right|\, \dd m(x)  = \lim_{n\to \infty}\frac{1}{|B_n|}
\int_{B_n} \left|g(sx) - \sum_{j=1}^k \langle g,e_{\xi_j}\rangle
e_{\xi_j} (sx)\right|\, \dd s
\]
for all $x\in X_g$ and $k\in \NN$. Let $D\subset C(X)$ be a
countable dense subset. Define
$$X':=\bigcap_{g\in D} X_g.$$
Then, $X'$ has full measure and a short computation gives
\[
\int_X \left|f - \sum_{j=1}^k \langle f,e_{\xi_j}\rangle
e_{\xi_j}\right|\, \dd m(x)  = \lim_{n\to \infty}\frac{1}{|B_n|}
\int_{B_n} \left|f(sx) - \sum_{j=1}^k \langle f,e_{\xi_j}\rangle
e_{\xi_j} (sx)\right|\, \dd s
\]
for all $f\in C(X)$, $x\in X'$ and $k\in \NN$. This, in turn
implies that any $x\in X'$ is Besicovitch almost periodic: Indeed, a
short calculation invoking Birkhoff's ergodic theorem and
Cauchy--Schwarz' inequality shows
\begin{eqnarray*}
\overline{M}( |f(\cdot x) -\sum_{j=1}^k \langle f, e_{\xi_j}
\rangle\, e_{\xi_j}(x)\, \xi_j (\cdot)|) &=& \overline{M}( |f(\cdot
x) -\sum_{j=1}^k \langle f, e_{\xi_j} \rangle\, e_{\xi_j}(\cdot
x)|)\\
(\mbox{Birkhoff's ergodic theorem})\;\: &=&\lim_{n\to
\infty}\frac{1}{|B_n|} \int_{B_n} \left|f(sx) - \sum_{j=1}^k \langle
f,e_{\xi_j}\rangle
e_{\xi_j} (sx)\right|\, \dd s\\
&=&\int_X \left|f - \sum_{j=1}^k \langle f,e_{\xi_j}\rangle
e_{\xi_j}\right|\, \dd m(x)\\
(\mbox{Cauchy--Schwarz' inequality})\;\: & \leq & \|f -\sum_{j=1}^k
\langle f,e_{\xi_j}\rangle e_{\xi_j}\|_2\\ &\to& 0, \;\: n\to
\infty.
\end{eqnarray*}
This gives the desired claim.

\smallskip

We now turn to the remaining statements. The equality $\Freq (x) =
\Eig(X,G,m)$ for an element $x\in \Bap (X,G,m)$ directly follows from Theorem
\ref{t:converse}. As for $e_{f,\xi}$ we note that it is well-defined
and invariant (as $\Bap(X,G,m)$ is invariant). The equality $\Freq
(x) = \Eig(X,G,m)$ for $x\in \Bap (X,G,m)$ rather directly gives
that  $e_{f,\xi}$ vanishes identically for $\xi
\in\widehat{G}\setminus \Eig(X,G,m)$. In particular, it has constant
modulus on $\Bap(X,G,m)$. Now, by Theorem \ref{t:converse},  for
each $\xi \in E$ and  $x\in \Bap (X,G,m)$ there exists a normalized
eigenfunction $e_\xi^{(x)}$ with
\[
e_{f,\xi} (x) =\Average(f_x \overline{\xi}) = \langle f, e_{\xi}^{(x)}\rangle
\]
As  each eigenspace is one-dimensional the $e_\xi^{(x)}$ arising for
different $x\in \Bap(X,G,m)$ will only differ by a factor of modulus
one. This gives the statement on constancy of the modulus.

That $e_{f,\xi}$ is the projection onto the eigenspace of $\xi$
follows from standard theory, see e.g. \cite{Len0} for recent
discussion.
\end{proof}

\section{Weyl almost periodic points, unique ergodicity  and continuity of
eigenfunctions}\label{s:weyl}

In this section, we consider a strengthening of Besicovitch almost
periodicity viz Weyl almost periodicity. We show that Weyl almost
periodicity extends from one point to its  orbit closure. This
allows us to characterize transitive systems all of whose points are
Weyl almost periodic. These are exactly the uniquely ergodic
dynamical systems with pure point spectrum and continuous
eigenfunctions. This ties in with various recent investigations (see
below for details).

\bigskip

Let    $(X,G)$ be a dynamical system. Whenever $d$ is a metric on
$X$ generating the topology and $(B_n)$ is a F\o lner sequence, we
define for each $n\in \NN$  the map
\[
\overline{M}_n :B(G) \longrightarrow \RR,\qquad \overline{M}_n
(f):=\sup_{s\in G} \frac{1}{|B_n|} \int_{B_n +s} f(t)\, \dd t.
\]
This gives then rise to the functions
\[
D_n :=D_{n,d} : X\times X\longrightarrow [0,\infty),\qquad D_n
(x,y):=\overline{M}_n (s\mapsto d(sx,sy)),
\]
for each $n\in\NN$. Then, each
$D_n$ can easily be seen to be an invariant pseudometric.
Moreover, for each $x\in X$ the function  $t\mapsto D_n (x,tx)$ is
uniformly continuous (by the argument used in Section \ref{s:mean}
to show uniform continuity of $D$). The function $D_n$ is referred
to as \textit{averaged  pseudometric} on level $n$.

A bounded measurable function $f : G\longrightarrow \CC$ is \textit{Weyl
almost periodic} if  for each $\varepsilon
>0$ there exist  $k\in \NN$, $\xi_1,\ldots, \xi_k \in \widehat{G}$
and $c_1,\ldots ,c_k\in\CC$ with
$$\limsup_{n\to \infty} \overline{M}_n (|f - \sum_{j=1}^k c_{\xi_j} \xi_j|) <
\varepsilon.$$ As discussed in Appendix \ref{s:a:wap}, an equivalent
alternative characterization is that for each $\varepsilon
>0$ there exist $N\in\NN$ and a relatively dense set $R\subset G$
with
$$ \overline{M}_N (|f - f(\cdot - t)|) <
\varepsilon$$ for all $t\in R$. A crucial feature of Weyl almost
periodic functions is the  existence of the limits
\[
\lim_{n\to\infty} \frac{1}{|B_n|} \int_{B_n + s_n} f(t)\, \xi (t)\,
\dd t
\]
irrespective of (and uniform in) the chosen sequence $(s_n)\in G$
for each $\xi \in \widehat{G}$, see Appendix \ref{s:a:wap}.

\begin{definition}[Weyl almost periodic points]
 Let $(X,G)$ be a dynamical system and $d$ a
metric on $X$ generating the topology and $(B_n)$ a F\o lner
sequence and $D_n$, $n\in\NN$,  the associated averaged metrics.
Then, a point $x\in X$ is called  Weyl  almost periodic with respect
to $d$ and $(B_n)$ if for every $\varepsilon >0$ there exists an
$N\in\NN$ such that
$$\{t\in G: D_N (x,tx)<\varepsilon\}$$
is relatively dense.
\end{definition}
\begin{remark} It follows from Proposition \ref{p:a:final-averaging}
that an $x\in X$ is Weyl almost periodic if and only if for each
$\varepsilon >0$ there exists a relatively dense set $R\subset G$
and an $N_0\in\NN$ such that $D_N (x,tx)<\varepsilon$ for all $N\geq
N_0$ and $t\in R$.
\end{remark}

Arguing as in Section \ref{s:main} with $\overline{M}$ replaced by
$\overline{M}_n$ we see that Weyl almost periodicity is independent
of the chosen metric and the following holds.

\begin{lemma} Let  $(X,G)$ be a dynamical system and $d$ a
metric on $X$ generating the topology and $(B_n)$ a F\o lner
sequence and $D_n$, $n\in\NN$,  the associated averaged metrics.
Then, the following assertions for $x\in X$ are equivalent:
\begin{itemize}
\item[(i)] The point $x$ is Weyl almost periodic.
\item[(ii)] The algebra $\mathcal{A}_x$ consists of Weyl almost
periodic functions.
\item[(iii)] The set $\{f\in C(X):  f_x \mbox{ is Weyl almost
periodic} \}$ separates the points of $X$.

\item[(iv)]  For any $s\in G$ the function $d^{(s)}_x$
is Weyl  almost periodic, where $d^{(s)} \in C(X)$ is defined via
$d^{(s)}(y) : =d(sx,y)$.

\end{itemize}
\end{lemma}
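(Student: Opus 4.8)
The plan is to prove the four-fold equivalence by exactly mirroring the structure of Lemma \ref{l:char-ap-functions}, the analogous statement for mean almost periodicity, and then simply track how the key ingredients persist when $\overline{M}$ is replaced by the stronger $\overline{M}_n$ and ``mean almost periodic'' is replaced by ``Weyl almost periodic''. Concretely, I would establish the cycle (iv)$\Longrightarrow$(iii)$\Longrightarrow$(ii)$\Longrightarrow$(i)$\Longrightarrow$(iv). The implication (iv)$\Longrightarrow$(iii) is immediate, since the functions $d^{(s)}$, $s\in G$, separate the points of $X$ (and hence of $\overline{Gx}$), so Weyl almost periodicity of each $d^{(s)}_x$ already furnishes a point-separating family inside $\{f\in C(X): f_x\text{ is Weyl almost periodic}\}$.

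For (iii)$\Longrightarrow$(ii) the argument follows the Stone--Weierstra\ss{} strategy used before: the set $\{f\in C(X): f_x\text{ is Weyl almost periodic}\}$ is an algebra closed under complex conjugation and uniform convergence (these closure properties for uniformly continuous Weyl almost periodic functions are recorded in Appendix \ref{s:a:wap}), it contains the constants, and it contains every $f$ vanishing on $\overline{Gx}$ since then $f_x\equiv 0$. Together with the point-separation hypothesis (iii) this algebra separates all points of $X$, so Stone--Weierstra\ss{} forces it to be all of $C(X)$, giving (ii). The implication (i)$\Longrightarrow$(iv) is essentially a definition-chase: for arbitrary $z\in X$, the triangle inequality yields, for each fixed level $n$,
\[
\overline{M}_n\bigl(|d^{(z)}_x(\cdot - t) - d^{(z)}_x|\bigr) = \overline{M}_n\bigl(s\mapsto |d(z,(s-t)x) - d(z,sx)|\bigr) \leq \overline{M}_n\bigl(s\mapsto d(sx,(s-t)x)\bigr) = D_n(x,-tx),
\]
and Weyl almost periodicity of $x$ makes the right-hand side small on a relatively dense set of $t$ for a suitable level $N$; thus every $d^{(s)}_x$ is Weyl almost periodic.

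The main obstacle, and the only step requiring genuine care, is (ii)$\Longrightarrow$(i). Following the scheme from Lemma \ref{l:char-ap-functions}, I would pick a countable point-separating family $\mathcal{C}\subset C(X)$ with $\|f\|_\infty\leq 1$ and coefficients $c_f>0$ summing to a finite value, and form the metric $e(z,y):=\sum_{f\in\mathcal{C}} c_f\,|f(z)-f(y)|$. The delicate point is that the mean-almost-periodic proof relied on Proposition \ref{p:sum} to transfer almost periodicity of each $f_x$ to the summed metric; here one needs the corresponding stability of the \emph{Weyl} seminorm $\limsup_n \overline{M}_n$ under the countable weighted sum, together with the fact that Weyl almost periodicity of a point requires selecting a single level $N$ that works simultaneously (or eventually) for the whole tail. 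The argument goes through because the relevant summation and approximation lemmas for uniformly continuous Weyl almost periodic functions are available in Appendix \ref{s:a:wap} (analogues of Proposition \ref{p:sum}), and because the level-$N$ structure is compatible with uniform tail estimates from the summability of the $c_f$. Once $x$ is shown to be Weyl almost periodic with respect to $e$, independence of the metric (noted just before the lemma, by repeating the Section \ref{s:main} argument with $\overline{M}_n$ in place of $\overline{M}$) yields (i).
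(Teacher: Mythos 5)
Your proposal is correct and is essentially the paper's own approach: the paper disposes of this lemma in a single line, asserting that the arguments of Section \ref{s:main} (the proof of Lemma \ref{l:char-ap-functions} together with the metric-independence argument of Lemma \ref{l:independence}) carry over with $\overline{M}$ replaced by $\overline{M}_n$, which is precisely what you spell out, including the correct identification of (ii)$\Longrightarrow$(i) as the step needing level management. One minor attributional quibble: Appendix \ref{s:a:wap} does not literally record an analogue of Proposition \ref{p:sum}, but it supplies the needed ingredients --- Proposition \ref{p:a:final-averaging} and the characterization of Weyl almost periodicity by relatively dense sets of almost periods at a common level $N$ --- from which that analogue follows by the Proposition \ref{prop B1} intersection argument applied to the translation-invariant seminorm $\overline{M}_N$, so your argument stands.
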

The previous lemma implies in particular that any Weyl almost
periodic point is Besicovitch almost periodic. It is not hard to see
by examples that the converse does not hold.

\smallskip

Weyl almost periodicity has a stability property.

\begin{lemma}[Stability of Weyl almost periodicity along orbit
closures]\label{l:orbit-closure}
 Let $(X,G)$ be a dynamical system. Assume that $x\in X$ is  Weyl
almost periodic. Then, any element in the orbit closure of $x$  is
Weyl almost periodic.
\end{lemma}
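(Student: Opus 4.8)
The plan is to prove the pointwise domination $D_N(y, ty) \le D_N(x, tx)$ for every $y \in \overline{Gx}$, every $t \in G$ and every $N \in \NN$; the lemma then follows immediately. Indeed, given $\varepsilon > 0$, Weyl almost periodicity of $x$ furnishes an $N$ for which $R := \{t \in G : D_N(x, tx) < \varepsilon\}$ is relatively dense, and the domination yields $R \subseteq \{t \in G : D_N(y, ty) < \varepsilon\}$, so the latter set is relatively dense as well and $y$ is Weyl almost periodic. Thus everything reduces to the domination inequality.

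To establish it, I would fix $t$ and $N$ and unfold the definition of $\overline{M}_N$, writing
\[
D_N(y, ty) = \sup_{s \in G} g_s(y), \qquad g_s(y) := \frac{1}{|B_N|} \int_{B_N + s} d(ry, (r+t)y) \,\dd r ,
\]
where I used $r(ty) = (r+t)y$. The first key step is that each $g_s$ is a continuous function of $y$ on $X$: the integrand $r \mapsto d(ry,(r+t)y)$ is bounded by $\diam(X) < \infty$ and, for fixed $r$, depends continuously on $y$ by continuity of the action and of $d$, so continuity of $g_s$ follows from dominated convergence over the relatively compact set $B_N + s$. Consequently $y \mapsto D_N(y, ty)$, being a supremum of continuous functions, is lower semicontinuous.

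The second step uses invariance of $D_N$ to transport this along the orbit. Since $X$ is metrizable and $y \in \overline{Gx}$, I can pick a sequence $t_k \in G$ with $t_k x \to y$. For each fixed window shift $s$ and each $k$, the definition of the supremum and invariance give $g_s(t_k x) \le D_N(t_k x, t \cdot t_k x) = D_N(x, tx)$, using $t \cdot t_k x = t_k(tx)$. Letting $k \to \infty$ and invoking continuity of $g_s$ yields $g_s(y) = \lim_k g_s(t_k x) \le D_N(x, tx)$, and taking the supremum over $s$ gives $D_N(y, ty) \le D_N(x, tx)$, as desired.

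The main obstacle is conceptual rather than computational: one is tempted to argue that $y \mapsto D_N(y, ty)$ is $d$-continuous and pass to the limit directly, but this fails because $D_N$ need not generate the topology of $d$ --- indeed $D_N(y, y')$ can stay bounded away from $0$ even as $d(y, y') \to 0$, since the action is not assumed equicontinuous. The resolution, as above, is to never compare $y$ with its neighbours in the $D_N$-pseudometric, but to exploit only the genuine $d$-continuity of each individual fixed-window average $g_s$; this is exactly what makes the supremum lower semicontinuous and delivers the inequality in the direction needed, namely an upper bound on $D_N(y, ty)$.
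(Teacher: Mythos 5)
Your proposal is correct and follows essentially the same route as the paper: the paper also proves the domination $D_N(y,ty)\leq D_N(x,tx)$ by noting that $D_N$ is lower semicontinuous (being a supremum of continuous fixed-window averages) and invariant, and then concludes exactly as you do. Your argument merely spells out the continuity of each window average $g_s$ and the passage to the limit in more detail.
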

\begin{proof}  The function $D_N$ is lower semicontinuous for each $N\in \NN$ as it is  a
supremum over continuous functions. From this and the invariance of
$D_N$ we find
$$D_N (y,ty)\leq \liminf_{n\to\infty} D_N (s_n x, t s_n x) = D_N (x,tx)$$
whenever $s_n x\to y$ for a sequence $(s_n)$ in $G$. This easily
gives the desired statement.
\end{proof}

\begin{prop}\label{p:one-half} Let $(X,G)$ be a  dynamical system with
transitive element $p\in X$. Let  $p$ be Weyl almost periodic. Then
$(X,G)$ is uniquely ergodic, has pure point spectrum,  all
eigenfunctions are continuous and  $\Freq(x) = \Eig(X,G,m)$ holds
for all $x\in X$. Moreover, for any  $f\in C(X)$ and $\xi \in
\widehat{G}$, the averages
\[
A_n (f_x\overline{\xi}):=\frac{1}{B_n} \int_{B_n} f(tx)\,
\overline{\xi(t)}\, \dd t
\]
converge (uniformly in $x$) towards the projection of $f$ onto the
eigenspace of $\xi$.
\end{prop}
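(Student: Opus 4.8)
The plan is to reduce everything to a single uniform-convergence statement for averages along orbits and then read off all the assertions from it. First I would invoke Lemma \ref{l:orbit-closure}: since $p$ is transitive its orbit is dense, so $\overline{Gp} = X$, and every $x\in X$ is Weyl almost periodic, hence Besicovitch almost periodic. Applying Theorem \ref{t:converse} to $p$ then produces an ergodic measure $m$ for which $p$ is generic, with $(X,G,m)$ having pure point spectrum and $\Eig(X,G,m) = \Freq(p)$. It remains to upgrade ergodicity to unique ergodicity, to establish continuity, and to prove the uniform convergence in the ``moreover'' clause.

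The core step is uniform convergence of $A_n(f_x\overline{\xi})$. Fix $f\in C(X)$ and $\xi\in\widehat{G}$, and set $g_n(x):=A_n(f_x\overline{\xi})$; each $g_n$ is continuous on $X$ by continuity of $f$ and of the action. Since $p$ is Weyl almost periodic, $f_p\in\mathcal{A}_p$ is a Weyl almost periodic function, so the crucial averaging feature of such functions, applied to the character $\overline{\xi}\in\widehat{G}$, gives that the shifted averages $h_n(s):=\frac{1}{|B_n|}\int_{B_n+s} f_p(u)\,\overline{\xi(u)}\,\dd u$ converge, uniformly in $s\in G$, to $\Average(f_p\overline{\xi})$. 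A change of variables yields $g_n(sp)=\xi(s)\,h_n(s)$ for all $s\in G$, so on the dense orbit $\sup_{s}|g_n(sp)-g_m(sp)| = \sup_s|h_n(s)-h_m(s)|\to 0$ as $n,m\to\infty$; because $g_n-g_m$ is continuous and $Gp$ is dense, this supremum equals $\sup_X|g_n-g_m|$. Hence $(g_n)$ is uniformly Cauchy on $X$ and converges uniformly to a continuous function $e_{f,\xi}^{\mathrm c}$.

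From this the proposition falls out. Taking $\xi=1$ the factor $\xi(s)$ disappears, so $g_n\to\Average(f_p)=\int_X f\,\dd m$ uniformly; since averaging against any invariant $\mu$ gives $\int g_n\,\dd\mu=\int f\,\dd\mu$, every invariant $\mu$ satisfies $\int f\,\dd\mu=\int f\,\dd m$, forcing unique ergodicity. For general $\xi$, evaluating the uniform limit on the orbit gives $e_{f,\xi}^{\mathrm c}(sp)=\xi(s)\Average(f_p\overline{\xi})$, whence $e_{f,\xi}^{\mathrm c}(tx)=\xi(t)e_{f,\xi}^{\mathrm c}(x)$ first on the orbit and then, by continuity and density, on all of $X$; thus $e_{f,\xi}^{\mathrm c}$ is a continuous eigenfunction to $\xi$ whenever it is nonzero. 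Under unique ergodicity every point is generic, and every point is Besicovitch almost periodic, so $\Bap(X,G,m)=X$; hence Theorem \ref{t:main:bap} identifies the pointwise limit $e_{f,\xi}^{\mathrm c}(x)=\Average(f_x\overline{\xi})$ with $P_\xi f$ in $L^2(X,m)$, which is the asserted uniform convergence towards the projection. Finally, each $\xi\in\Eig(X,G,m)=\Freq(p)$ admits some $f$ with $\Average(f_p\overline{\xi})\neq 0$, producing a nonzero continuous eigenfunction, and since eigenspaces are one-dimensional (Theorem \ref{t:converse}) every eigenfunction is continuous; applying Theorem \ref{t:converse} to each $x$, all generic for the same $m$ by unique ergodicity, gives $\Freq(x)=\Eig(X,G,m)$ for all $x$.

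The step I expect to be the main obstacle is the passage from convergence on the dense orbit to uniform convergence on $X$. It is essential to phrase this as a uniform Cauchy condition, using only $|g_n(sp)-g_m(sp)|=|h_n(s)-h_m(s)|$, because the natural candidate limit $sp\mapsto\xi(s)\Average(f_p\overline{\xi})$ is not visibly well defined on the orbit before one knows that $\xi$ descends to a continuous function on $X$ — which is itself part of the conclusion. The Cauchy formulation sidesteps this circularity, and continuity of the resulting limit then delivers the eigenfunction relation automatically.
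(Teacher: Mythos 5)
Your proof is correct and takes essentially the same route as the paper's: Lemma \ref{l:orbit-closure} propagates Weyl almost periodicity to all of $X$, uniform existence of Weyl means yields unique ergodicity, and Theorems \ref{t:converse} and \ref{t:main:bap} identify the limits $A_n(f_x\overline{\xi})$ with the projections and give pure point spectrum, with continuity following from uniformity of the convergence. Your uniform-Cauchy argument on the dense orbit (via $g_n(sp)=\xi(s)h_n(s)$ and continuity of $g_n-g_m$) is a careful elaboration of what the paper compresses into ``uniform existence on the orbit \ldots{} easily gives uniform existence on the orbit closure'' and ``by Weyl almost periodicity these averages converge uniformly in $x\in X$''---the same idea, spelled out, and it correctly sidesteps the well-definedness issue of the candidate limit on a possibly non-injectively parametrized orbit.
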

\begin{proof} It is well-known that unique ergodicity is equivalent
to uniform (in $y\in X$) convergence  of the averages
\[
\frac{1}{|B_n|} \int_{B_n} f(ty)\, \dd t
\]
for each  continuous  $f:X\longrightarrow \CC$. Now,  uniform
existence of these averages  on the orbit of $x$  is a direct
consequence of Weyl almost periodicity. This easily gives uniform
existence on the orbit closure. As the orbit closure is $X$ the
desired statement on unique ergodicity follows. Denote the unique
invariant probability measure by $m$.

By the previous lemma and the transitivity  assumption on $p$ every
$x\in X$ is Weyl almost periodic. In particular, every element is
Besicovitch almost periodic. As $(X,G)$ is uniquely ergodic every
$x\in X$ is also generic with respect to $m$. Hence, $X=\Bap(X,G,m)$
follows. By Theorem \ref{t:main:bap},  this implies pure point
spectrum as well as pointwise convergence of the averages $A_n
(f_x\overline{\xi})$ to the projection of $f$ onto the eigenspace of
$\xi$ for each $f\in C(X)$ and $\xi \in \widehat{G}$. Now, by Weyl
almost periodicity these averages converge uniformly in $x\in X$.
Hence, their limit is continuous and continuity of the
eigenfunctions follows.
\end{proof}

\begin{theorem}\label{t:main-weyl}
Let $(X,G)$ be a dynamical system with transitive point  $p\in X$.
Then, the following assertions are equivalent:
\begin{itemize}
\item[(i)] The dynamical system $(X,G)$ is uniquely ergodic with
pure point spectrum and continuous eigenfunctions.
\item[(ii)] The point $p$ is Weyl almost periodic.
\item[(iii)] Every $x \in X$ is Weyl almost periodic.
\end{itemize}
In this case, we have $\Freq(x) = \Eig(X,G,m)$ for all $x\in X$.
\end{theorem}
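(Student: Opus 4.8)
The plan is to run the cycle (ii)$\Rightarrow$(i)$\Rightarrow$(iii)$\Rightarrow$(ii), with the implication (i)$\Rightarrow$(iii) carrying all the new content. The implication (ii)$\Rightarrow$(i), together with the final assertion $\Freq(x)=\Eig(X,G,m)$ for all $x$, is precisely Proposition~\ref{p:one-half} applied to the transitive Weyl almost periodic point $p$. The implication (iii)$\Rightarrow$(ii) is trivial since $p\in X$; conversely (ii)$\Rightarrow$(iii) would also follow at once from Lemma~\ref{l:orbit-closure} together with $\overline{Gp}=X$. Thus everything reduces to showing that (i) forces \emph{every} $x\in X$ to be Weyl almost periodic.

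For this I would use the characterization of Weyl almost periodicity via functions proved just before Lemma~\ref{l:orbit-closure}: it suffices to show that $\mathcal{A}_x$ consists of Weyl almost periodic functions, i.e. that $f_x$ is Weyl almost periodic for every $f\in C(X)$. Fix $f\in C(X)$ and $\varepsilon>0$. Since $(X,G,m)$ has pure point spectrum with a basis of continuous eigenfunctions $(e_\xi)$, I can choose a finite linear combination $g=\sum_{j=1}^k a_j e_{\xi_j}$ with $\|f-g\|_{L^2(m)}<\varepsilon$. The key structural input is that the eigenfunction identity $e_\xi(tx)=\xi(t)e_\xi(x)$ holds for \emph{all} $t\in G$ and $x\in X$, not merely $m$-almost everywhere; this is exactly what continuity of the eigenfunctions buys, and it is consistent with the way continuous eigenfunctions are produced in Proposition~\ref{p:one-half}. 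Consequently $g_x(t)=\sum_{j=1}^k a_j e_{\xi_j}(x)\,\xi_j(t)=:P(t)$ is a genuine trigonometric polynomial for every single $x\in X$.

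It then remains to estimate the Weyl seminorm of $f_x-P$. Writing $H:=|f-g|\in C(X)$ we have $|f_x(t)-P(t)|=H(tx)$, and the change of variables $t=u+s$ gives $\frac{1}{|B_n|}\int_{B_n+s}H(tx)\,\dd t=\frac{1}{|B_n|}\int_{B_n}H(u(sx))\,\dd u$. Here the engine is unique ergodicity: by the standard characterization recalled in the proof of Proposition~\ref{p:one-half}, the averages $\frac{1}{|B_n|}\int_{B_n}H(uy)\,\dd u$ converge to $\int_X H\,\dd m$ uniformly in $y\in X$. Taking the supremum over $s$ and then $\limsup$ in $n$ yields $\limsup_{n\to\infty}\overline{M}_n(|f_x-P|)\le\int_X H\,\dd m=\|f-g\|_{L^1(m)}\le\|f-g\|_{L^2(m)}<\varepsilon$. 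Hence $f_x$ is Weyl almost periodic; as $f$ and $x$ were arbitrary, every point of $X$ is Weyl almost periodic, which is (iii).

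The main obstacle is the structural input flagged above: that the continuous eigenfunctions are \emph{everywhere} equivariant rather than only $m$-a.e. This genuinely matters, because the transitive point $p$ need not lie in $\operatorname{supp}(m)$ (Denjoy-type systems are uniquely ergodic with pure point spectrum and continuous eigenfunctions, yet have $\operatorname{supp}(m)$ a proper minimal subset while $p$ has dense orbit), so one cannot upgrade an a.e. identity to the point $p$ merely by continuity against $\operatorname{supp}(m)$. Reading ``continuous eigenfunctions'' as genuine topological eigenfunctions---equivalently, pullbacks of characters through the continuous equivariant factor map onto the maximal equicontinuous factor---removes this difficulty and is the natural convention in the present setting. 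Everything else (the $L^2$ approximation by eigenfunctions and the uniform ergodic theorem) is routine.
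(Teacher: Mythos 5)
Your proposal is correct and follows essentially the same route as the paper's proof: (ii)$\Rightarrow$(i) together with the frequency statement via Proposition~\ref{p:one-half}, (ii)$\Leftrightarrow$(iii) via Lemma~\ref{l:orbit-closure}, and (i)$\Rightarrow$(iii) by approximating $f$ in $L^2(X,m)$ by a finite sum of continuous eigenfunctions, which along any orbit becomes a trigonometric polynomial, and bounding the Weyl seminorm of the error by $\|f-g\|_{2}$ through the uniform convergence of averages supplied by unique ergodicity. The everywhere (rather than merely $m$-a.e.) equivariance of the continuous eigenfunctions that you flag is also used, silently, in the first equality of the paper's own computation, so your reading of (i) as requiring genuine topological eigenfunctions is exactly the convention the paper operates under rather than an extra assumption on your part.
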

\begin{proof}
 The implication (iii)$\Longrightarrow$(ii) is obvious while
(ii)$\Longrightarrow$(iii) follows from Lemma~\ref{l:orbit-closure}.

The implication (ii)$\Longrightarrow$(i) and the last part of the
theorem were shown in Proposition \ref{p:one-half}. It remains to
show the reverse implication (i)$\Longrightarrow$(ii): This follows
by a variant of the proof of the corresponding part in Theorem
\ref{t:main:bap}. We denote the unique invariant measure by $m$ and
use the notation introduced in the proof of Theorem
\ref{t:main:bap}. Thus, we denote the inner product on $L^2 (X,m)$
by $\langle\cdot,\cdot\rangle$ and the associated norm by
$\|\cdot\|_2$.  As the spectrum is pure point, there exists an
orthonormal basis $e_\xi$, $\xi\in \Eig(X,G,m)$, of $L^2 (X,m)$ with
$e_\xi$ being an eigenfunction to the eigenvalue $\xi$ for each
$\xi\in \Eig(X,G,m)$. By assumption each $e_\xi$, $\xi\in E$, can be
chosen continuous. By unique ergodicity we then find for any finite
subset $A\subset \Eig(X,G,m)$ and any $y\in X$:
\begin{eqnarray*}\limsup_{n\to\infty} \overline{M}_n(|f(\cdot y)  - \sum_{\xi \in A} \langle e_{\xi},
f\rangle e_{\xi} (y) \xi (\cdot)|)
    &=& \limsup_{n\to\infty} \overline{M}_n
|f(\cdot y) - \sum_{\xi \in A} \langle e_{\xi}, f\rangle e_{\xi}
(\cdot y)|\\
\;\:
    & =& \lim_{n\to\infty}\frac{1}{|B_n|} \int_{B_n} |f(ty)
-\sum_{\xi \in A} \langle e_{\xi}, f\rangle
e_{\xi} (t y)|\, \dd t\\
    & =&  \int_X | f(x) -\sum_{\xi \in A} \langle e_{\xi}, f\rangle
e_{\xi} ( x)|\, \dd m(x)\\
(\mbox{Cauchy--Schwarz' inequality})\;\:
    &\leq& \|f  -\sum_{\xi \in A}
\langle e_{\xi}, f\rangle e_{\xi} \|_2.
\end{eqnarray*}
As $f_\xi$, $\xi \in E$, is a basis of  $L^2 (X,m)$, the last term
becomes arbitrarily small for suitable $A\subset E$.  This  shows
that $t\mapsto f(ty)$ is Weyl almost periodic for any $y\in X$.
\end{proof}

\begin{remark}
Recently, systems satisfying the equivalent conditions of the
theorem have attracted substantial interest:

(a) For  $G = \ZZ$ various equivalent characterizations of (i) have
been investigated in \cite{DG}. In particular, it is  shown there
that (i) is equivalent to topological isomorphy of the system to its
maximal equicontinuous factor.  The case of general amenable groups
$G$  has been studied in \cite{FGL}. In particular, it has been
shown there that (i) is equivalent to the continuity of the averaged
metric $D$ on $X\times X$. This continuity is known as \textit{mean
equicontinuity} of the system. In this context our preceding result
is remarkable as it does not assume control of $D$ on the whole of
$X\times X$ but just on $\{(x,tx) : t\in G\}$. In this sense,  we
have found a pointwise characterization of mean equicontinuity.
Note, however, that we require a rather uniform control on the orbit
of this one point.

(b) A large class of examples satisfying the conditions of the
theorem are  weakly almost periodic systems. A recent study of such
systems is carried out  in \cite{LS} to which we refer for a precise
definition and further references, see also subsequent Section
\ref{s:Weakly-and-Bohr}.

(c) A most important class of examples for the theorem are dynamical
systems arising from regular cut and project schemes. Such systems
are at the core of the study of aperiodic order (see \cite{TAO}).
They belong to the special class of dynamical systems known as
translation bounded measure dynamical system (TMDS), see Section
\ref{s:application} for details. In fact, a  huge bulk of material
in the theory of aperiodic order deals with TMDS  satisfying (i) of
the theorem. A characterization of such systems via an almost
periodicity property of its points had been missing for a long time.
It was only given recently in \cite{LeSpStr}. The preceding theorem
is a generalization of the corresponding result of \cite{LeSpStr} in
that it is not restricted to TMDS but rather applies to general
dynamical systems.
% The preceding theorem provides a characterization of systems
%satisfying (ii) via an almost periodicity property of a single
%point. Such a characterization has been missing for a long time.
%This is of substantial conceptual interest. Moreover, it not hard to
%show (i) for the concrete systems considered in the field of
%aperiodic order. So, the theorem can be understood as providing a
%pointwise characterization of the arguably most relevant feature of
%dynamical systems arising in aperiodic order. Let us also not that
%these systems are never weakly almost periodic \cite{LS} (except if
%they are periodic).
\end{remark}

If a system is minimal every point is transitive and we can  note
the following immediate consequence of the preceding theorem.

\begin{coro}
Let $(X,G)$ be a minimal  dynamical system. Then, the following
assertions are equivalent:
\begin{itemize}
\item[(i)] Every point in $X$  is Weyl almost periodic.
\item[(ii)] There exists a Weyl almost periodic point in $X$.
\item[(iii)] The dynamical system $(X,G)$ is uniquely ergodic with
pure point spectrum and continuous eigenfunctions.
\end{itemize}
\end{coro}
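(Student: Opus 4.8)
The plan is to read off everything directly from Theorem \ref{t:main-weyl}, exploiting the single structural fact available here: in a minimal system every point is transitive. The key move is that for \emph{any} choice of point $p\in X$, minimality makes $p$ transitive, so Theorem \ref{t:main-weyl} applies verbatim with this $p$. That theorem then delivers the equivalence of three statements which, after matching up labels, read as follows: its condition (i) (unique ergodicity together with pure point spectrum and continuous eigenfunctions) is exactly our (iii); its condition (iii) (every $x\in X$ is Weyl almost periodic) is exactly our (i); and its condition (ii) asserts that the chosen point $p$ is Weyl almost periodic. In particular, applying the theorem to an arbitrary point $p$ already yields the equivalence of our (i) and (iii).

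It then remains only to weave (ii) into the cycle, and here there is no real work. First, (i) $\Longrightarrow$ (ii) is immediate, since if every point is Weyl almost periodic then surely at least one is. For (ii) $\Longrightarrow$ (iii), I would take a point $p\in X$ witnessing (ii), that is, a Weyl almost periodic point. By minimality this $p$ is transitive, so Theorem \ref{t:main-weyl} applies to it; condition (ii) of that theorem holds for this particular $p$, hence so does its condition (i), which is precisely our (iii). Together with the equivalence (i) $\Longleftrightarrow$ (iii) obtained above, this closes the implication cycle (i) $\Rightarrow$ (ii) $\Rightarrow$ (iii) $\Rightarrow$ (i) and establishes that all three statements are equivalent.

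Because the whole argument is just a transcription through Theorem \ref{t:main-weyl}, I do not expect any genuine obstacle; the one point requiring care is the bookkeeping of which element plays the role of the transitive point $p$. In the step (ii) $\Rightarrow$ (iii) one must feed in the \emph{specific} witness supplied by (ii), whereas in the step (iii) $\Rightarrow$ (i) one is free to use \emph{any} point of $X$, and it is precisely minimality that licenses every such choice by guaranteeing transitivity.
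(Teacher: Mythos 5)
Your proof is correct and matches the paper's approach: the paper also derives this corollary as an immediate consequence of Theorem \ref{t:main-weyl}, using exactly the observation that minimality makes every point transitive. Your extra care in feeding the specific witness from (ii) into the theorem is precisely the bookkeeping the paper leaves implicit.
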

\begin{remark} A system can well be Weyl almost periodic without
being minimal. Consider for example the orbit closure in the
subshift $(\{0,1\}^\ZZ,\ZZ)$  of the element $\omega$ defined with
$\omega_0 = 1$ and $\omega_n = 0$ for $n\neq 0$.
\end{remark}

\section{Weakly and Bohr almost periodic dynamical
systems}\label{s:Weakly-and-Bohr} In the preceding section, we have
met Weyl almost periodic points. In this section, we introduce two
special classes of Weyl almost periodic points, viz Bohr almost
periodic points and weakly almost periodic points. This allows us to
reanalyze and characterize weakly almost periodic dynamical system
and Bohr almost periodic dynamical system  via the new approach in
this paper. Specifically, the main result of this section shows that
a (transitive)  dynamical system is weakly almost periodic (Bohr
almost periodic) if and only if every of its points is weakly almost
periodic (Bohr almost periodic). We refer to \cite{LS2} for a recent
discussion of Bohr and weakly almost periodic systems including
relevance, background and further references.

\bigskip

Let $G$ be a {$\sigma$-compact} locally compact abelian group and
denote the set of uniformly continuous and bounded functions on $G$ by $C_u
(G)$. This space is a Banach space when equipped with the supremum
norm $\|\cdot\|_\infty$. An $f\in C_u (G)$ is called \textit{weakly
almost periodic} if the set $\{f(\cdot -t) : t\in G\}$ is relatively
compact in $C_u (G)$ with respect to the weak topology of the Banach
space $(C_u (G),\|\cdot\|_\infty)$. Clearly, any Bohr almost
periodic $f\in C_u (G)$  is weakly almost periodic (as Bohr almost
periodicity means by definition that the set $\{f(\cdot -t) : t\in
G\}$  is relatively compact in the original topology). In fact, a
main result on weakly almost periodic functions, see e.g. \cite{AG}
or \cite{EN,LS2}, gives that any weakly almost periodic $f$ can be
(uniquely) decomposed into $f = g + h$ with $g\in C_u (G) $ Bohr
almost periodic and $h\in C_u (G)$ satisfying
$$
\lim_{n\to\infty} \sup_{s\in G} \frac{1}{|B_n|} \int_{B_n} |h(s + t) |\, \dd t  = 0
$$
for any F \o lner
sequence $(B_n)$. {The existence of this decomposition is
\cite[Theorem~4.7.11]{MoSt}, while the uniqueness follows
immediately from \cite[Lemma~4.6.8]{MoSt}.}

When combined with $(\heartsuit)$ in Appendix
\ref{s:a:bap} this easily gives that any weakly almost periodic
function is Weyl almost periodic.

\begin{definition}[Weakly and Bohr almost periodic points] Let $(X,G)$ be a
dynamical system.

(a) A point $x\in X$ is called weakly almost periodic if
$\mathcal{A}_x$ consists only of weakly almost periodic functions.

(b) A point $x\in X$ is called Bohr almost periodic if
$\mathcal{A}_x$ consists only of Bohr almost periodic functions.

\end{definition}

\begin{remark} By the discussion preceding the definition, any
Bohr almost periodic point is weakly almost periodic and any weakly
almost periodic point is Weyl almost periodic.
\end{remark}

In the subsequent discussion, the weakly almost periodic case and the
Bohr almost periodic case can be mostly treated in parallel. To
facilitate the reading, we then give statements for the weakly almost
periodic case and mention the Bohr almost periodic case in brackets
only.

With  essentially the same proof as Lemma \ref{l:char-ap-functions},
we obtain the following statement.

\begin{lemma}\label{l:metric-char-bap-wap} Let $(X,G)$ be a dynamical system and let $x \in X$ be given.
Then, the following assertions are equivalent:
\begin{itemize}
  \item [(i)] The point  $x$ is weakly (Bohr) almost periodic.
  \item [(ii)] The weakly (Bohr) almost periodic functions in $\cA_x$ are dense in $(\cA_x, \| \cdot \|_\infty)$.
  \item [(iii)] The set  $\{ f \in C(X): f_x  \mbox{ is weakly (Bohr) almost periodic } \}$
separates the points of $X$.
\item[(iv)] For any $s\in G$ the function $d^{(s)}_x$ is weakly (Bohr) almost
periodic,  where $d^{(s)} \in C(X)$ is defined via $d^{(s)}(y) :
=d(sx,y)$.
\end{itemize}
\end{lemma}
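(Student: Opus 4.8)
The plan is to follow the template established by the proof of Lemma \ref{l:char-ap-functions}, transporting each implication to the weakly (Bohr) almost periodic setting. The essential input is that, just as the mean almost periodic functions do, the class of weakly almost periodic (respectively Bohr almost periodic) functions in $C_u(G)$ forms an algebra that is closed under complex conjugation and under uniform convergence, and contains the constant functions. For Bohr almost periodic functions these closure properties are classical; for weakly almost periodic functions they are part of the standard theory referenced via \cite{AG,EN,LS2,MoSt}. I would open the proof by recording these facts, since they are exactly what drives the Stone--Weierstra\ss{} argument.

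First I would establish the cycle (iv)$\Longrightarrow$(iii)$\Longrightarrow$(ii)$\Longrightarrow$(i) and close the loop with (i)$\Longrightarrow$(iv). The implication (iv)$\Longrightarrow$(iii) is immediate, as the functions $d^{(s)}$, $s\in G$, separate the points of $X$ (indeed of $\overline{Gx}$). For (iii)$\Longrightarrow$(ii) I would argue exactly as in Lemma \ref{l:char-ap-functions}: the set $\{f\in C(X): f_x \text{ weakly (Bohr) almost periodic}\}$ is a conjugation-closed, uniformly closed subalgebra of $C(X)$ containing the constants; by (iii) it separates points of $\overline{Gx}$, and it contains every $f$ vanishing on $\overline{Gx}$ (since then $f_x=0$), hence it separates points of $X$. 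Stone--Weierstra\ss{} then shows this algebra is all of $C(X)$, which says precisely that every element of $\mathcal{A}_x$ is weakly (Bohr) almost periodic, giving (ii) in the strong form. The implication (ii)$\Longrightarrow$(i) is then trivial, since density of the weakly (Bohr) almost periodic functions in $(\mathcal{A}_x,\|\cdot\|_\infty)$ together with uniform closedness of the weakly (Bohr) almost periodic class forces every element of $\mathcal{A}_x$ to be weakly (Bohr) almost periodic, which is the definition of $x$ being weakly (Bohr) almost periodic.

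Finally, for (i)$\Longrightarrow$(iv) I would note that by (i) every $f_x$ is weakly (Bohr) almost periodic, and $d^{(s)}\in C(X)$, so $d^{(s)}_x = (d^{(s)})_x \in \mathcal{A}_x$ is weakly (Bohr) almost periodic for each $s\in G$; this is exactly (iv). I expect the only genuine subtlety — and the point where care is needed rather than where the argument is hard — to be the verification that the weakly almost periodic functions are closed under uniform limits and under products, so that the Stone--Weierstra\ss{} machinery applies in the weakly almost periodic case as cleanly as in the Bohr case. Since these closure properties are supplied by the cited structure theory for weakly almost periodic functions, the main obstacle reduces to citing them correctly and noting that the one modification relative to Lemma \ref{l:char-ap-functions} is that here (ii) is phrased as density rather than as ``$\mathcal{A}_x$ consists only of'' such functions; but uniform closedness of the class collapses the two formulations, so the parallel proof goes through verbatim.
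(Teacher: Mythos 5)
Your proposal is correct and is essentially the paper's own argument: the paper proves this lemma precisely by remarking that it follows ``with essentially the same proof as Lemma~\ref{l:char-ap-functions}'', i.e.\ by the Stone--Weierstra\ss{} cycle you run, using that the weakly (Bohr) almost periodic functions form a conjugation-closed, uniformly closed subalgebra of $C_u(G)$ containing the constants. Your observations that (i)$\Longrightarrow$(iv) is now immediate (since (i) is by definition the ``all of $\mathcal{A}_x$'' condition) and that uniform closedness of the class collapses the density formulation of (ii) into (i) are exactly the adjustments the paper's remark presupposes.
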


Whenever $(X,G)$ is a dynamical system, any $f\in C(X)$ gives rise
to a function on the product $G\times X$, viz
$$
P_f : G\times X \longrightarrow \CC,\qquad (t,x) \mapsto
f(tx) \,.
$$  Roughly speaking one can say that the  preceding
discussion was concerned with almost periodicity properties of  the
functions $f_x = P_f (\cdot,x)$ for $x\in X$. It is natural to
consider almost periodicity properties of  restrictions of the $P_f$
to $X$ as well for fixed $t\in G$. This leads to the notion of
weakly (Bohr) almost periodic dynamical system.  For $f\in C(X)$ and
$t\in G$ we define
$$
f_t : X\longrightarrow \CC,\qquad f_t (x) = f(tx) = P_f (t,\cdot) \,.
$$

\begin{definition}[Weakly and Bohr almost periodic dynamical systems]
The dynamical system  $(X, G)$ is called weakly almost periodic and
Bohr almost periodic\footnote{In some papers this is called almost
periodic dynamical system, or strongly almost periodic dynamical
system.} respectively  if for any $f\in C(X)$ the family $\{f_t\, :\, t\in G\}$ has compact closure in the weak topology and the Banach
space topology of $(C(X),\|\cdot\|_\infty)$ respectively.
\end{definition}

The next  lemma relates weakly (Bohr) almost periodic dynamical
systems to  weakly (Bohr) almost periodic points.

\begin{lemma} \label{Lemma WAP DS implies WAP elements} Let $(X,G)$
be a dynamical system.

(a) If $(X,G)$ is weakly (Bohr) almost periodic then every $x \in X$
is a weakly (Bohr) almost periodic.

(b) If $x \in X$ is  transitive and weakly (Bohr) almost periodic
then $(X,G)$ is a weakly (Bohr) almost periodic dynamical system.

\end{lemma}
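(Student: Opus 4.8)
The plan is to build everything around a single \emph{sampling operator}. For a fixed $x\in X$ define
\[
S_x : C(X) \longrightarrow C_u(G),\qquad (S_x g)(t) := g(tx) = g_x(t).
\]
Since $X$ is compact and the action is continuous, the orbit map $t\mapsto tx$ makes $S_x g$ right uniformly continuous (so $S_x g\in C_u(G)$), and clearly $S_x$ is linear with $\|S_x g\|_\infty\le \|g\|_\infty$, hence bounded. The decisive point is the intertwining identity between the system-level family $\{f_t:t\in G\}\subseteq C(X)$ and the translates of the point-level function $f_x\in C_u(G)$: for $s,t\in G$ one computes $(S_x f_s)(t)=f_s(tx)=f((s+t)x)=f_x(s+t)$, so $S_x f_s = f_x(\cdot+s)$ and therefore
\[
S_x\bigl(\{f_s : s\in G\}\bigr) = \{f_x(\cdot+s):s\in G\} = \{f_x(\cdot-s):s\in G\}.
\]
This equation is the bridge: the left-hand set controls weak/Bohr almost periodicity of the \emph{system}, the right-hand set is exactly what controls weak/Bohr almost periodicity of the \emph{point} $x$ via $f_x$.

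For part (a) no transitivity is needed. A bounded linear operator is both norm-to-norm and weak-to-weak continuous, so $S_x$ carries (relatively) compact sets to (relatively) compact sets in each topology. If $(X,G)$ is weakly almost periodic then for each $f\in C(X)$ the weak closure of $\{f_s:s\in G\}$ is weakly compact in $C(X)$; applying $S_x$ and using weak-to-weak continuity shows $S_x(\{f_s\})=\{f_x(\cdot-s):s\in G\}$ is relatively weakly compact in $C_u(G)$, i.e. $f_x$ is weakly almost periodic. Running the same argument with the norm topology and norm-to-norm continuity of $S_x$ handles the Bohr case. Since $f\in C(X)$ was arbitrary, $\mathcal{A}_x$ consists of weakly (Bohr) almost periodic functions, so $x$ is weakly (Bohr) almost periodic.

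For part (b) transitivity enters precisely to make $S_x$ an isometric embedding: since $\overline{Gx}=X$, we have $\|S_x g\|_\infty=\sup_{t}|g(tx)|=\sup_{\overline{Gx}}|g|=\|g\|_\infty$, so $S_x$ is injective and its image $M:=S_x(C(X))$ is a closed subspace of $C_u(G)$ on which $S_x$ is an isometric isomorphism. Assuming $x$ weakly almost periodic, the bridge identity gives that $\{f_x(\cdot-s):s\in G\}=S_x(\{f_s\})$ is relatively weakly compact in $C_u(G)$; as this set lies in $M$ and $M$ is (norm-closed, hence) weakly closed, its weak closure stays inside $M$ and is thus relatively weakly compact in $M$. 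Pulling back through the isometric isomorphism $S_x^{-1}:M\to C(X)$ (weak-to-weak continuous) yields that $\{f_s:s\in G\}$ is relatively weakly compact in $C(X)$, which is exactly the defining property of a weakly almost periodic dynamical system. The Bohr case is the same and in fact easier: an isometry preserves and reflects relative norm compactness, so relative compactness of $S_x(\{f_s\})$ in $C_u(G)$ transfers directly to $\{f_s\}$ in $C(X)$.

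The one genuinely non-formal ingredient, and where I expect the only friction, is the pull-back step in the weak case of (b): one must know that a subset of the closed subspace $M$ which is relatively weakly compact in the ambient $C_u(G)$ is relatively weakly compact in $M$ as well. This rests on two standard facts from functional analysis, namely that a norm-closed subspace is weakly closed and that the weak topology of $M$ coincides with the topology induced on $M$ by the weak topology of $C_u(G)$ (both via Hahn--Banach). I would state these as standard and apply them; the remaining verifications — boundedness of $S_x$, the identity $S_x f_s=f_x(\cdot+s)$, and $S_x g\in C_u(G)$ — are routine, the last using only compactness of $X$ and continuity of the action.
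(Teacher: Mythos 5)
Your proof is correct and follows essentially the same route as the paper: your sampling operator $S_x$ is exactly the paper's map $F(f)=f_x$, part (a) uses its boundedness and hence weak continuity, and part (b) uses transitivity to make it an isometric isomorphism onto its closed image whose inverse is again (weakly) continuous. The only difference is that you spell out details the paper leaves implicit, notably the intertwining identity $S_x f_s = f_x(\cdot+s)$ and the fact that relative weak compactness in the ambient space $C_u(G)$ passes to the norm-closed subspace $\mathrm{Im}(S_x)$.
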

\begin{proof}
Fix an arbitrary $x \in X$. Define $F: C(X) \to C_u(G)$ via
\begin{displaymath}
F(f)(t)=f_x\,.
\end{displaymath}

(a) It is easy to see that $F$ is well defined and $\| F \| \leq 1$.
It follows that $F$ is continuous, and hence also weakly continuous
\cite[Lemma.~4.4.2]{MoSt}. Therefore, the image of a compact,
respectively weak compact set, is compact, respectively weak
compact. Since $F$ commutes with the group action, the claim
follows.

\smallskip

(b) We know that $F$ is continuous. Moreover, since $x$ has dense
orbit, it follows immediately that for all $f \in C(X)$ we have
\begin{displaymath}
\| F(f) \|_\infty = \| f \|_\infty \,.
\end{displaymath}
Therefore, $F$ is an isometry and hence it induces an isomorphic
isometry $F: C(X) \to \mbox{Im}(F)$. In particular, $\mbox{Im}(F)$
is closed in $C_u(G)$ and the mapping
\begin{displaymath}
F^{-1} : \mbox{Im}(F) \to C(X) \,,
\end{displaymath}
is a continuous operator, and hence is also weakly continuous
\cite[Lemma.~4.4.2]{MoSt}. Therefore, $F^{-1}$ maps compact and
weakly compact sets, respectively, into compact and weakly compact
sets respectively. This easily gives the desired statement.
\end{proof}

We obtain the following immediate consequence of the preceding
lemma.

\begin{theorem}\label{thm weakl ap ds}
Let $(X,G)$ be a dynamical system with transitive  $p \in X$. Then,
the  following assertions are equivalent:
\begin{itemize}
  \item [(i)] $(X,G)$ is weakly almost periodic.
  \item [(ii)] Every $x \in X$ is  weakly almost periodic.
  \item [(iii)] The point  $p\in X$ is  weakly almost periodic.
\end{itemize}
\end{theorem}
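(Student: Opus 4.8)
The plan is to deduce the equivalence directly from the two halves of Lemma \ref{Lemma WAP DS implies WAP elements} by running the standard cyclic chain of implications (i)$\Longrightarrow$(ii)$\Longrightarrow$(iii)$\Longrightarrow$(i). Since the substantive functional-analytic work has already been carried out in that lemma, the theorem should follow essentially by bookkeeping.

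First I would observe that (i)$\Longrightarrow$(ii) is nothing but part (a) of Lemma \ref{Lemma WAP DS implies WAP elements} in the weakly almost periodic case: if $(X,G)$ is weakly almost periodic, then every $x\in X$ is weakly almost periodic. The implication (ii)$\Longrightarrow$(iii) is then immediate, since the transitive point $p$ is in particular one of the points $x\in X$ to which (ii) applies.

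For the remaining implication (iii)$\Longrightarrow$(i) I would invoke part (b) of the same lemma. By hypothesis the point $p$ is transitive, and by (iii) it is weakly almost periodic; hence part (b) yields that $(X,G)$ is a weakly almost periodic dynamical system. This closes the cycle and establishes the equivalence of (i), (ii) and (iii).

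I do not expect any genuine obstacle here, as everything rests on Lemma \ref{Lemma WAP DS implies WAP elements}. The only point worth emphasizing is that transitivity of $p$ is used precisely in the step (iii)$\Longrightarrow$(i): part (b) of the lemma needs a \emph{transitive} weakly almost periodic point in order to promote a statement about a single point (dense orbit makes the map $F$ an isometry, so that $F^{-1}$ transports relative weak compactness back to $C(X)$) to a statement about the whole system. The other two implications use nothing beyond the definitions and part (a), which require no transitivity assumption.
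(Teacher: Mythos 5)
Your proof is correct and follows exactly the paper's route: the theorem is stated there as an immediate consequence of Lemma \ref{Lemma WAP DS implies WAP elements}, with part (a) giving (i)$\Longrightarrow$(ii), the implication (ii)$\Longrightarrow$(iii) being trivial, and part (b) (using transitivity of $p$) giving (iii)$\Longrightarrow$(i). Your remark on where transitivity enters is also accurate.
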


We also note the following consequence of the theorem.

\begin{coro}\label{c:weakly_ap_ds}
Let $(X,G)$ be a dynamical system with transitive $p\in X$. For $s,t
\in G$ define $g_{s,t}: X \to \CC$ via
\begin{displaymath}
g_{s,t}(x)=d(sp,tx) \,.
\end{displaymath}
Then,  $p\in X$ is weakly  almost periodic if and only if for each
$s \in G$ the set $\{ g_{s,t} : t \in G \}$ has compact closure in
the weak  topology of $(C(X),\|\cdot\|_\infty)$.
\end{coro}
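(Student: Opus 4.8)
The plan is to transport the compactness condition on the family $\{g_{s,t} : t \in G\}\subset C(X)$ into a translation-compactness condition for a single function on $G$, and then to quote the characterization of weakly almost periodic points already obtained in Lemma \ref{l:metric-char-bap-wap}. The key bookkeeping observation is that, with $d^{(s)}\in C(X)$ defined by $d^{(s)}(y):=d(sp,y)$ as in part (iv) of that lemma, one has $g_{s,t}=(d^{(s)})_t$ in the notation $f_t(x)=f(tx)$; hence $\{g_{s,t}:t\in G\}$ is exactly the orbit $\{(d^{(s)})_t : t\in G\}$, and $F(d^{(s)})=d^{(s)}_p$ is precisely the function $t\mapsto d(sp,tp)$ appearing in condition (iv) for the point $p$.

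First I would reuse the map $F:C(X)\to C_u(G)$, $F(f)=f_p$ (that is, $F(f)(t)=f(tp)$), from the proof of Lemma \ref{Lemma WAP DS implies WAP elements}. Since $p$ is transitive, $F$ is an isometric isomorphism onto its closed image $\mathrm{Im}(F)\subset C_u(G)$; being an isometric isomorphism, both $F$ and $F^{-1}$ are weakly continuous (by \cite[Lemma~4.4.2]{MoSt}), so $F$ is a homeomorphism of $C(X)$ onto $\mathrm{Im}(F)$ for the respective weak topologies. A direct computation gives the intertwining relation $F(f_t)(r)=f((t+r)p)=F(f)(r+t)$, so that $F(f_t)$ is the translate $F(f)(\cdot-(-t))$ of $F(f)$; as $t$ runs through $G$ so does $-t$, whence
\[
F\bigl(\{g_{s,t}:t\in G\}\bigr)=\{F(d^{(s)})(\cdot-a):a\in G\}=\{d^{(s)}_p(\cdot-a):a\in G\},
\]
the full translation orbit of $d^{(s)}_p\in C_u(G)$.

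Given this, the equivalence follows by chasing weak compactness through $F$. Because $F$ is a weak homeomorphism onto $\mathrm{Im}(F)$, the set $\{g_{s,t}:t\in G\}$ has weakly compact closure in $C(X)$ if and only if its image, the translation orbit of $d^{(s)}_p$, has weakly compact closure in $\mathrm{Im}(F)$. The remaining step is to replace $\mathrm{Im}(F)$ by the ambient space $C_u(G)$: since $\mathrm{Im}(F)$ is a closed, hence weakly closed, subspace, the weak topology it inherits from $C_u(G)$ coincides with its own weak topology (Hahn--Banach extension of functionals), and the weak closure of a subset of $\mathrm{Im}(F)$ remains in $\mathrm{Im}(F)$. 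Thus relative weak compactness of the orbit in $\mathrm{Im}(F)$ is the same as relative weak compactness in $C_u(G)$, which by definition says that $d^{(s)}_p$ is weakly almost periodic. Quantifying over $s\in G$ and invoking the equivalence (i)$\Leftrightarrow$(iv) of Lemma \ref{l:metric-char-bap-wap} then yields that $p$ is weakly almost periodic if and only if $\{g_{s,t}:t\in G\}$ has weakly compact closure for every $s\in G$.

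The purely formal verifications — that $g_{s,t}=(d^{(s)})_t$ and that $F(f_t)$ is a genuine translate of $F(f)$ — are immediate from the definitions. I expect the only point requiring care to be the passage between relative weak compactness in the subspace $\mathrm{Im}(F)$ and in the ambient space $C_u(G)$; this is where the weak-closedness of the closed subspace and the coincidence of the subspace weak topology with the relative weak topology are used, and it is precisely the place where the transitivity of $p$ (which makes $F$ an isometry with closed image) enters in an essential way.
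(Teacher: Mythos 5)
Your proof is correct, but it follows a genuinely different route from the paper's. The paper argues in two separate directions at the level of the \emph{system}: for the ``only if'' part it invokes Theorem \ref{thm weakl ap ds} (a weakly almost periodic transitive point makes $(X,G)$ a weakly almost periodic system) and then reads off the conclusion for $f=d^{(s)}$; for the ``if'' part it introduces the algebra $\WAP(X)$ of all $f\in C(X)$ whose orbit $\{f_t: t\in G\}$ is relatively weakly compact, cites \cite[Prop.~3.3, Prop.~3.4]{LS2} for the fact that this is a closed subalgebra of $(C(X),\|\cdot\|_\infty)$ containing $1$, checks by a direct $r/3$-estimate (using transitivity of $p$) that the functions $g_{s,1}=d^{(s)}$ separate points, and concludes $\WAP(X)=C(X)$ by Stone--Weierstra\ss{}, so that the system, and hence the point $p$, is weakly almost periodic. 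You instead handle both directions simultaneously as a single chain of equivalences: you identify $\{g_{s,t}:t\in G\}$ with the orbit $\{(d^{(s)})_t : t\in G\}$, push it through the isometry $F(f)=f_p$ (well defined and isometric precisely because $p$ is transitive) onto the translation orbit of $d^{(s)}_p$ in $C_u(G)$, verify that relative weak compactness transfers across $F$ and across the closed subspace $\mathrm{Im}(F)$ (weak continuity of bounded operators, Hahn--Banach for the subspace weak topology, Mazur for weak closedness), and then quote the pointwise characterization (i)$\Leftrightarrow$(iv) of Lemma \ref{l:metric-char-bap-wap}. Your approach is more self-contained within the paper --- it needs no new citation to \cite{LS2}, and the Stone--Weierstra\ss{}/separation work is absorbed into the already-stated Lemma \ref{l:metric-char-bap-wap} rather than redone --- and your functional-analytic bookkeeping with $\mathrm{Im}(F)$ is exactly the mechanism underlying the paper's Lemma \ref{Lemma WAP DS implies WAP elements}. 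What the paper's route buys in exchange is the stronger intermediate conclusion, obtained en passant, that the entire system $(X,G)$ is weakly almost periodic; in your argument that fact is not produced directly but would follow afterwards from Theorem \ref{thm weakl ap ds}.
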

\begin{proof} Indeed, the only if
part of the statement is immediate from the definition of   weak
almost periodicity and the preceding theorem. To show the if part we
will denote (in line with \cite{LS2})
\[
\WAP(X):= \{ f \in C(X) : \{ f_t :t \in G \} \mbox{ has compact
closure in the weak  topology} \}\,.
\]
Then, by \cite[Prop.~3.3,Prop 3.4]{LS2} $\WAP(X)$ is a closed algebra
of $(C(X), \| \cdot \|_\infty)$, which contains the constant function 1.
Moreover,  we have $g_{t,1} \in \WAP(X)$ for all $t\in G$. We next
show  that the functions $g_{t,1}$ separate the points of $X$. Let
$x \neq y \in X$ be arbitrary, and let $r=d(x,y)$.

Since $p$ is a transitive point, there exists some $t \in G$ such
that $d(tp,x) < \frac{r}{3}$. Then,
\[
  g_{t,1}(x) =d(tp,x) < \frac{r}{3} \qquad \text{ and } \qquad
  g_{t,1}(y)= d(tp,y) \geq d(x,y) -d (tp,x) > \frac{2r}{3} \,.
\]
This shows that $g_{t,1}(x) \neq g_{t,1}(y)$.
Therefore, $\WAP(X)$ is a closed algebra of $(C(X), \| \cdot \|_\infty)$
which is separating the points, and hence, by Stone--Weierstra\ss{}' theorem,
$\WAP(X)=C(X)$. This gives the desired statement.
\end{proof}

The preceding theorem allows us to recapture a main result of
\cite{LS2}.

\begin{coro}
Let $(X,G)$ be a transitive weakly almost periodic dynamical system.
Then, $(X,G)$ is uniquely ergodic with pure point spectrum and
continuous eigenfunctions.
\end{coro}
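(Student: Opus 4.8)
The plan is to read this off as an immediate chaining of the two main structural theorems already established for transitive systems, so essentially no new work is required. The only genuine content is to route the hypothesis ``weakly almost periodic dynamical system'' through the correct pointwise notion, after which the characterization of Weyl almost periodic points does all the heavy lifting.

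First I would fix a transitive point $p\in X$, whose existence is guaranteed by the transitivity hypothesis. Since $(X,G)$ is a weakly almost periodic dynamical system and $p$ is transitive, the equivalence of (i) and (iii) in Theorem \ref{thm weakl ap ds} immediately yields that the \emph{point} $p$ is weakly almost periodic. This is the step where the system-level hypothesis is converted into a point-level one, and it is exactly what Lemma \ref{Lemma WAP DS implies WAP elements} and Theorem \ref{thm weakl ap ds} were set up to provide.

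Next I would invoke the implication recorded in the remark following the definition of weakly and Bohr almost periodic points, namely that every weakly almost periodic point is Weyl almost periodic (this in turn rests on the decomposition of a weakly almost periodic function into a Bohr almost periodic part plus a Weyl-null part discussed just before that definition). Applying this to $p$ shows that $p$ is a \emph{transitive Weyl almost periodic point}.

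Finally I would apply Theorem \ref{t:main-weyl}: for a system possessing a transitive point, Weyl almost periodicity of that point (its condition (ii)) is equivalent to the system being uniquely ergodic with pure point spectrum and continuous eigenfunctions (its condition (i)). Hence $(X,G)$ enjoys precisely these three properties, which is the assertion of the corollary. There is no real obstacle at this level; the difficulty has been entirely absorbed into Theorem \ref{t:main-weyl}, whose proof (through Proposition \ref{p:one-half} and the uniform convergence of the averages $A_n(f_x\overline{\xi})$ to the relevant spectral projections) is where unique ergodicity, pure point spectrum, and continuity of the eigenfunctions are actually extracted from a single transitive Weyl almost periodic point.
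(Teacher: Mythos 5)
Your proposal is correct and follows essentially the same route as the paper's own proof: pass from the system-level hypothesis to weak almost periodicity of a transitive point $p$ via Theorem \ref{thm weakl ap ds}, note that weakly almost periodic points are Weyl almost periodic, and conclude by Theorem \ref{t:main-weyl}. The paper's proof is exactly this three-step chain, stated more tersely.
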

\begin{proof}
Let $p$ be a transitive point. Then, by Theorem~\ref{thm weakl ap
ds}, $p$ is weakly almost periodic. Hence, $p$ is Weyl almost
periodic as well and the claim follows from
Theorem~\ref{t:main-weyl}.
\end{proof}

As should be clear from the preceding discussion, the analogue of
Theorem \ref{thm weakl ap ds} with `weakly almost periodic'
replaced by `Bohr almost periodic' holds as well (and similarly for
Corollary \ref{c:weakly_ap_ds}). In fact, a somewhat stronger
statement is true for Bohr almost periodic points. To state this
properly, we introduce the following notation when dealing with an
dynamical system $(X,G)$ with metric $d$. For $f\in C(X)$ we consider the mapping
$$
\pi_f : X\longrightarrow C_u (G), \qquad \pi_f (x) := f_x \,,
$$
and, when an $x\in X$ is fixed, we set
$$
Y(f,x):=\overline{ \{f_{tx} : t\in G\} }  \, ,
$$
where the closure is taken in $C_u (G)$ with the
(usual) supremum norm. We also define
$$
\overlined : X\times X \longrightarrow [0,\infty), \qquad \overlined(x,y):=
\sup_{s\in G} d(sx,sy) \,.
$$
Clearly, $\overlined$ is a metric.

\begin{theorem}\label{t:char-bap-point} Let $(X,G)$ be a dynamical system. For $x\in X$ the
following assertions are equivalent:
\begin{itemize}
  \item [(i)] The element $x\in X$ is Bohr almost periodic.
\item[(ii)] For any $f\in C(X)$  the map
$\pi_f : \overline{Gx} \longrightarrow C_u (G)$, $y\mapsto f_y$, is
continuous with range given by   $Y(f,x)$.
\item[(iii)] There exists a $G$-invariant metric on $\overline{Gx}$ generating the topology.
 \item[(iv)] The function $\overlined$ is continuous on
$\overline{Gx}$.
\item[(v)] The orbit closure $\overline{Gx}$  admits a structure of a locally compact group such that $G\longrightarrow
  \overline{Gx}$,  $t\mapsto t x$,  becomes a continuous group homomorphism.
\end{itemize}
In particular, the orbit closure of any Bohr almost periodic point
is minimal.
\end{theorem}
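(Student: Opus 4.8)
The plan is to establish the single cycle $(i)\Rightarrow(ii)\Rightarrow(iv)\Rightarrow(iii)\Rightarrow(v)\Rightarrow(i)$, which yields all equivalences at once, and to read off minimality from $(v)$. Throughout I would use the classical characterization of Bohr almost periodicity of a $g\in C_u(G)$ by relative compactness of its translation orbit $\{g(\cdot-t):t\in G\}$ in $(C_u(G),\|\cdot\|_\infty)$. First note that $\pi_f$ does map into $C_u(G)$: for fixed $y$ the function $f_y(t)=f(ty)$ is bounded and uniformly continuous, the latter because $f\circ\alpha_h\to f$ uniformly on the compact space $X$ as $h\to e$.

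The implication $(i)\Rightarrow(ii)$ is the heart of the argument; the difficulty, and the main obstacle, is to extract continuity of $\pi_f$ from the purely orbitwise information in $(i)$. Rather than proving equicontinuity by hand I would use an enveloping-type construction. By $(i)$ each $Y(f,x)=\overline{\{f_{tx}:t\in G\}}$ is norm-compact, so $Z_0:=\prod_{f\in C(X)}Y(f,x)$ is compact in the product of norm topologies. Set
\[
\Psi:G\longrightarrow Z_0,\qquad \Psi(t)=(f_{tx})_{f\in C(X)},\qquad Z:=\overline{\Psi(G)}.
\]
Evaluation at $e$ gives a map $\beta:Z\to X$ sending $(g_f)_f$ to the unique $w$ with $f(w)=g_f(e)$ for all $f$; such a $w$ exists and is unique since $f\mapsto g_f(e)$ is a pointwise limit of evaluation functionals $f\mapsto f(s_\lambda x)$, hence a multiplicative functional, and $C(X)$ separates points. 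Now $\beta$ is continuous (evaluation at $e$ is norm-continuous) and $\beta(\Psi(t))=tx$. Writing $(g_f)_f\in Z$ as $\Psi(s_\lambda)\to(g_f)_f$ and putting $w:=\beta((g_f)_f)$, one checks $s_\lambda x\to w$ and hence $g_f(t)=\lim_\lambda f((t+s_\lambda)x)=f(tw)=f_w(t)$, so $g_f=f_w$. This shows $\beta(Z)=\overline{Gx}$, that $\beta$ is injective, and that $\beta^{-1}(w)=(f_w)_f$. Since a continuous bijection from a compact space onto a Hausdorff space is a homeomorphism, $\beta^{-1}$ is continuous; composing with the norm-continuous coordinate projections shows each $\pi_f=\mathrm{pr}_f\circ\beta^{-1}$ is continuous with range $Y(f,x)$, which is $(ii)$.

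For $(ii)\Rightarrow(iv)$ I would first deduce that the action on $\overline{Gx}$ is equicontinuous. Otherwise there are $\varepsilon>0$ and $y_n,z_n\in\overline{Gx}$ with $d(y_n,z_n)\to0$ but $d(s_ny_n,s_nz_n)\ge\varepsilon$ for suitable $s_n$; passing to convergent subsequences $y_n,z_n\to y$ and $s_ny_n\to a$, $s_nz_n\to b$ with $a\ne b$, and picking $f$ with $f(a)\ne f(b)$, continuity of $\pi_f$ forces $\|f_{y_n}-f_{z_n}\|_\infty\to0$, contradicting $|f(s_ny_n)-f(s_nz_n)|$ staying bounded below. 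Equicontinuity makes $\overlined$ continuous at the diagonal, and $|\overlined(y,z)-\overlined(y',z')|\le\overlined(y,y')+\overlined(z,z')$ then gives continuity everywhere, i.e.\ $(iv)$. As $\overlined\ge d$ is a $G$-invariant metric whose continuity forces the $\overlined$-topology to coincide with the given one, $(iv)\Rightarrow(iii)$ is immediate.

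For $(iii)\Rightarrow(v)$, let $\rho$ be a $G$-invariant metric generating the topology. On the dense orbit I would define $tx*sx:=(t+s)x$, which is well defined because $(t+s)-(t'+s')\in G_x$ whenever $tx=t'x$ and $sx=s'x$; thus $Gx\cong G/G_x$ becomes a group with neutral element $x$, and $G$-invariance of $\rho$ makes $*$ bi-invariant. Multiplication and inversion are then Lipschitz, so they extend continuously to the completion of $(Gx,\rho)$, namely the compact space $\overline{Gx}$; this makes $\overline{Gx}$ a compact (hence locally compact) group and $t\mapsto tx$ a continuous homomorphism. For $(v)\Rightarrow(i)$, write $K=\overline{Gx}$ and $\phi(t)=tx$ with dense image, and for $f\in C(X)$ put $h:=f|_K\in C(K)$. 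On the compact group $K$ the orbit $\{h(\cdot\,k):k\in K\}$ is norm-compact, and the isometric pullback $C(K)\to C_u(G)$, $h\mapsto h\circ\phi$, carries it to a compact set containing $\{f_x(\cdot-t):t\in G\}$; hence every $f_x$ is Bohr almost periodic, and so is $x$. Finally, minimality follows because for $y\in K$ the orbit $Gy=\phi(G)*y$ is dense in $K*y=K$.
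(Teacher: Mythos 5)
Your proposal is correct, and it takes a genuinely different route from the paper, whose proof runs through the cycle (i)$\Rightarrow$(ii)$\Rightarrow$(iii)$\Rightarrow$(iv)$\Rightarrow$(v)$\Rightarrow$(i). The differences are threefold. For (i)$\Rightarrow$(ii) the paper argues more elementarily: if $y_n\to y$ then $\pi_f(y_n)\to\pi_f(y)$ pointwise, and norm-compactness of $Y(f,x)$ forces every uniformly convergent subsequence---hence the whole sequence---to converge to $\pi_f(y)$; your enveloping construction (Tychonoff product of the $Y(f,x)$, the Gelfand-type evaluation $\beta$, and the compact-to-Hausdorff bijection principle) is heavier machinery, but it packages the compactness into one soft argument that handles all $f$ simultaneously. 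Second, you traverse (iii) and (iv) in the opposite order: the paper obtains (iii) from (ii) by summing countably many continuous invariant pseudometrics $\overline{d}_f(y,z)=\|\pi_f(y)-\pi_f(z)\|_\infty$ over a countable dense separating family, and only then deduces (iv); you instead extract equicontinuity of the action on $\overline{Gx}$ from (ii) by a compactness/contradiction argument, conclude that $\overline{d}$ itself is continuous, and observe that $\overline{d}$ is then already the invariant metric required in (iii)---arguably cleaner, since it avoids choosing a countable family and weights. Third, your (v)$\Rightarrow$(i) via the isometric pullback $C(K)\to C_u(G)$, $h\mapsto h\circ\phi$, is fully self-contained, whereas the paper's text for this step invokes continuity of $\pi_f$ ``from (iv)'' (best read as a slip for (v), since (iv) is not yet available at that point of the cycle); correspondingly you read off minimality from the group structure in (v), while the paper reads it off from (iii). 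The points you gloss over are routine: surjectivity of $\beta$ is immediate because $\beta(Z)$ is compact, hence closed, and contains $Gx$; and the identity $ty=\phi(t)\ast y$ for $y\in\overline{Gx}$, needed in your minimality argument, follows from continuity of the action and of multiplication in $K$ together with density of the orbit.
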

\begin{proof}
(i)$\Longrightarrow$(ii): Clearly, $\pi_f (tx) = f_{tx} = f_x
(\cdot+t)$ for any $t\in G$. Thus, it suffices to  show that $f_y$
belongs indeed to $Y(f,x)$ for any $y\in \overline{Gx}$ and $\pi_f$
is continuous on $Y(f,x)$. It is enough to show that $\pi_f(y_n)$
converges to $\pi_f (y)$ whenever $(y_n)$ is a sequence in
$\overline{Gx}$ converging to $y\in \overline{Gx}$ such that $\pi_f
(y_n)$ belong to $Y(f,x)$. Now, it is not hard to see that the
functions $\pi_f (y_n)$ converge pointwise to $\pi_f (y)$. Moreover,
by (i), the set $Y(f,x)$ is compact and, hence, $\pi_f(y_n)$ has a
uniform convergent subsequence. Now, any such subsequence must
converge to $\pi_f (y)$ (as uniform convergence implies pointwise
convergence). This gives the desired convergence statement.

\smallskip

(ii)$\Longrightarrow$(iii): By (ii) the map  $\overlined_f$ with
$\overlined_f (y,z) := \|\pi_f (y) - \pi_f (z)\|_\infty$ is a
continuous pseudometric on $\overline{Gx}$. It is clearly invariant.
Now, choose a countable dense subset $D\subset C(X)$ separating the
points of $\overline{Gx}$. Assume without loss of generality that
any element of $D$ is normalized, and choose for any $f\in D$ a $c_f
>0$ with $\sum_{f\in D} c_f <\infty$. Then, $\sum_{f\in D} c_f
\overlined_f$ is a continuous invariant metric on $\overline{Gx}$.
As $\overline{Gx}$ is compact any continuous metric determines its
topology.

\smallskip

(iii)$\Longrightarrow$(iv): Let $\overlined'$ be a continuous
invariant metric on $\overline{Gx}$. Let $\varepsilon >0$ be
arbitrary. As $\overlined'$ is a continuous metric on
$\overline{Gx}$ there exists a $\delta>0$ with $d(y_1,y_2)\leq
\varepsilon$ for $y_1,y_2\in \overline{Gx}$  whenever
$\overline{d}'(y_1,y_2)\leq \delta$. As $\overlined'$ is invariant,
we obtain then $d(t y_1, t y_2)\leq \varepsilon$ for all $t\in G$
and, hence, $\overlined(y_1,y_2)\leq \varepsilon$   whenever
$\overlined'(y_1,y_2)\leq \delta$ for $y_1,y_2\in \overline{Gx}$.
This is the desired statement.

\smallskip

(iv)$\Longrightarrow$(v): Using the invariant metric $\overlined$ it
is not hard to see that there is a group structure on
$\overline{Gx}$ with $tx + sx = (t+s)x$. Here, we show only that
this is well-defined. The remaining statements then follow easily.
Assume $tx = t'x$ and $sx = s'x$. Then triangle inequality  and
invariance of the metric gives
$$\overlined((t+s)x,(t' + s')x)\leq  \overlined((t+s)x,(t+s')x) +\overlined((t+s')x,
(t' + s')x) \leq \overlined(sx,s'x) + \overlined(tx,t'x) = 0.$$ This
shows well-definedness.

\smallskip

(v)$\Longrightarrow$(i): This is standard. We include some details
for convenience of the reader. Let $f$ be a continuous function on
$X$. We have to show that the set  $S:=\{f_x (t + \cdot): t\in G\} =
\{f_{tx} : t\in G\}$ has compact closure in $C_u (G)$ with respect
to the supremum norm. From (iv) we easily see that $\pi_f :
\overline{Gx}\longrightarrow C_u (G), y\mapsto f_y$, is continuous.
Hence,  $\pi_f (\overline{Gx})$ is compact and, as it  clearly
contains $S$, the desired statement follows.

\smallskip

The minimality statement follows directly from (iii).
\end{proof}

The preceding result shows that  Bohr almost periodic points give
rise to minimal orbit closures. In fact, within the weakly almost
periodic points  one can even characterize the Bohr almost periodic
points by minimality of their orbit closures:

\begin{prop} Let $(X,G)$ be a dynamical system and $x\in X$ be
weakly almost periodic. Then, $x$ is Bohr almost periodic if and
only if its orbit closure $\overline{Gx}$ is minimal.
\end{prop}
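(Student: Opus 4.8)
The plan is to treat the two implications separately, the first being immediate and the second carrying all the weight. For the direction ``Bohr almost periodic $\Rightarrow$ minimal orbit closure'' there is nothing to do: this is exactly the final assertion of Theorem~\ref{t:char-bap-point}, and it does not even use weak almost periodicity. For the converse, the plan is to exploit the machinery already built for Weyl almost periodic points. Since $x$ is weakly almost periodic it is Weyl almost periodic, and $x$ is a transitive point of $(\overline{Gx},G)$; hence Theorem~\ref{t:main-weyl} applies and gives that $(\overline{Gx},G)$ is uniquely ergodic with pure point spectrum and continuous eigenfunctions. I denote the unique invariant measure by $m$ and, for each $\xi\in\Eig(\overline{Gx},G,m)=\Freq(x)$, fix a continuous eigenfunction $e_\xi$ with $e_\xi(ty)=\xi(t)e_\xi(y)$. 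Transitivity makes $|e_\xi|$ constant on the dense orbit, hence on $\overline{Gx}$, and the normalization $\|e_\xi\|_{L^2(m)}=1$ then forces $|e_\xi|\equiv 1$; in particular $\xi(t)=\overline{e_\xi(x)}\,e_\xi(tx)$ along the orbit. I will further use that pure point spectrum makes $(e_\xi)_{\xi\in\Eig}$ an orthonormal basis of $L^2(\overline{Gx},m)$, and that minimality forces $\operatorname{supp}m=\overline{Gx}$, since the support of an invariant measure is closed, invariant and nonempty.

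Fix $f\in C(\overline{Gx})$; I must show $f_x$ is Bohr almost periodic. As $x$ is weakly almost periodic, $f_x$ is a weakly almost periodic function, so by the decomposition recalled before the definition of weakly almost periodic points I may write $f_x=g+h$ with $g\in C_u(G)$ Bohr almost periodic and $h\in C_u(G)$ null, i.e.\ $\overline{M}_n(|h|)\to 0$. Every character in the Bohr--Fourier spectrum of $g$ is a frequency of $f_x$ (since $h$ null forces $c_\xi(h)=0$, so $c_\xi(g)=c_\xi(f_x)$, which is nonzero only on $\Freq(x)=\Eig$), hence lies in $\Eig$. Via $\xi=\overline{e_\xi(x)}(e_\xi)_x$, a trigonometric polynomial in such characters equals $\bigl(\sum_j c_j\overline{e_{\xi_j}(x)}e_{\xi_j}\bigr)_x$, the sampling along $x$ of a continuous function. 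Approximating $g$ uniformly by such polynomials and using that the sampling map $C(\overline{Gx})\to C_u(G)$, $f\mapsto f_x$, is isometric for transitive $x$ (cf.\ the proof of Lemma~\ref{Lemma WAP DS implies WAP elements}(b)), the corresponding continuous functions form a Cauchy sequence in $C(\overline{Gx})$; their limit is a $\tilde f\in C(\overline{Gx})$ with $\tilde f_x=g$. Consequently $h=(f-\tilde f)_x$ is the sampling of the continuous function $f-\tilde f$.

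It remains to show $f=\tilde f$. For each $\xi\in\Eig$ unique ergodicity (genericity of $x$) gives
\[
\langle f-\tilde f,e_\xi\rangle=\lim_{n}\frac{1}{|B_n|}\int_{B_n}(f-\tilde f)(tx)\,\overline{e_\xi(tx)}\,\dd t=\overline{e_\xi(x)}\,\lim_{n}\frac{1}{|B_n|}\int_{B_n}h(t)\,\overline{\xi(t)}\,\dd t=0,
\]
the last equality because $h$ is null, so all its Fourier--Bohr coefficients vanish. Thus $f-\tilde f$ is orthogonal to the orthonormal basis $(e_\xi)_{\xi\in\Eig}$, whence $f-\tilde f=0$ in $L^2(m)$, i.e.\ $m$-almost everywhere; being continuous on a space where $m$ has full support, $f-\tilde f\equiv 0$. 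Hence $f_x=g$ is Bohr almost periodic, and as $f$ was arbitrary, $x$ is Bohr almost periodic.

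The main obstacle is precisely the passage from the scalar weak-almost-periodic decomposition back to the space. The null part $h$ need not vanish as a function on $G$ merely because its averages do, so one cannot argue pointwise on $G$; the decisive point is that $h$ is the sampling of a \emph{genuine continuous function} $f-\tilde f$ and that minimality (through full support of $m$) upgrades the resulting $L^2$-vanishing to pointwise vanishing. This is where weak almost periodicity (through the Bohr/null splitting and the realization of $g$ as a sampling) and minimality must be used together; neither alone suffices.
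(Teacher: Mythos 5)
Your proof is correct, but it takes a genuinely different route from the paper's. The paper disposes of the converse direction in three lines: by Lemma~\ref{Lemma WAP DS implies WAP elements}(b) the system $(\overline{Gx},G)$ is a weakly almost periodic dynamical system, it is minimal by assumption, and then the paper cites an external result (Prop.~3.7 of \cite{LS2}) stating that any minimal component of a weakly almost periodic system is a Bohr almost periodic system; Lemma~\ref{Lemma WAP DS implies WAP elements}(a) then converts this back into the statement about the point $x$. You instead reprove the relevant special case of that external result from scratch, staying inside the toolkit of this paper: Theorem~\ref{t:main-weyl} supplies unique ergodicity, pure point spectrum and continuous eigenfunctions; the Eberlein decomposition $f_x=g+h$ splits off the Bohr part; the identity $\xi=\overline{e_\xi(x)}(e_\xi)_x$ together with the isometry of sampling realizes $g$ as $\tilde f_x$ for a continuous $\tilde f$; and minimality enters through full support of $m$, upgrading the $L^2$-vanishing of $f-\tilde f$ to pointwise vanishing, which is exactly the mechanism that kills the null part. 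What the paper's proof buys is brevity at the price of a black box from \cite{LS2}; what yours buys is self-containedness and a transparent explanation of where minimality is indispensable (your own example-style remark is confirmed by the paper's non-minimal Weyl almost periodic example $\omega_0=1$, $\omega_n=0$ otherwise). Two small points you should make explicit: (1) your approximation of $g$ by trigonometric polynomials \emph{with frequencies in the Fourier--Bohr spectrum of $g$} is a classical fact (Bochner--Fej\'er-type summation, valid on LCA groups via the Bohr compactification), but it is strictly finer than the approximation statement $(\heartsuit)$ recorded in Appendix~\ref{s:a:bap}, so it deserves a citation or a proof; (2) the pointwise (rather than $m$-a.e.) eigenvalue equation $e_\xi(ty)=\xi(t)e_\xi(y)$ for the continuous eigenfunctions also uses full support of $m$, i.e.\ minimality, or alternatively the explicit construction of eigenfunctions in Theorem~\ref{t:main:bap}; as stated it reads as if it were automatic.
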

\begin{proof} We have just seen in Theorem \ref{t:char-bap-point} that the orbit closure of a Bohr
almost periodic point is minimal. So, consider now a weakly almost
periodic point $x\in X$ with minimal orbit closure $\overline{Gx}$.
Then, clearly $(\overline{Gx},G)$ is  weakly almost periodic  by (b)
of Lemma \ref{Lemma WAP DS implies WAP elements} and it is minimal
(by assumption). Now, as is well known (see e.g. Prop.~3.7 of
\cite{LS2}) any minimal component of a weakly almost periodic system
is Bohr almost periodic. Now, the desired claim follows from (a) of
Lemma \ref{Lemma WAP DS implies WAP elements}\end{proof}

\begin{remark} We note that a Bohr almost periodic dynamical system
does not need to be minimal as can easily be seen by considering the
`disjoint union' of two Bohr almost periodic systems.
\end{remark}

It is possible to characterize Bohr almost periodic
points by almost periodicity properties of the metric $\overlined$.

\begin{theorem}\label{t:Bohr}
Let $(X,G)$ be a dynamical system with metric $d$. Then the following assertions are equivalent for a point $p\in X$:
\begin{itemize}
\item[(i)] The point $p$ is Bohr almost periodic.
\item[(iii)] The function $G\mapsto [0,\infty)$, $t\mapsto
\overlined(p,tp)$, is Bohr almost periodic.
\end{itemize}
\end{theorem}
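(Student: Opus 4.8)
The plan is to prove both implications by passing through the sublevel sets of the function $g:G\to[0,\infty)$, $g(t):=\overlined(p,tp)$, in the spirit of Lemma~\ref{l:map-via-bohr}. First I record the structural facts about $g$. Since $X$ is compact, $g$ is bounded, and $g(0)=\overlined(p,p)=0$. Reindexing the supremum defining $\overlined$ shows that $\overlined$ is $G$-invariant, so $g$ is symmetric, $g(-t)=g(t)$; together with $d\le\overlined$ and the triangle inequality for the metric $\overlined$ this yields, for all $s,t\in G$,
\[
|g(s)-g(s-t)|\le\overlined(sp,(s-t)p)=g(t).
\]
Hence $\|g-g(\cdot-t)\|_\infty\le g(t)$, so whenever $g$ is continuous it is Bohr almost periodic \emph{iff} $\{t:g(t)<\varepsilon\}$ is relatively dense for every $\varepsilon>0$: the forward direction follows by evaluating the almost-period condition at $s=0$ (using $g(0)=0$ and $g\ge0$), the backward direction is immediate from the displayed estimate.

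For the implication (iii)$\Rightarrow$(i) I assume $g$ Bohr almost periodic and show that every $f_p$ with $f\in C(X)$ is Bohr almost periodic. By the previous remark $\{t:g(t)<\varepsilon\}$ is relatively dense for each $\varepsilon>0$. Fix $f\in C(X)$ and $\eta>0$; by uniform continuity of $f$ on the compact space $X$ choose $\varepsilon>0$ with $|f(y)-f(z)|<\eta$ whenever $d(y,z)<\varepsilon$. If $g(t)<\varepsilon$, then for every $s\in G$ we have $d(sp,(s-t)p)\le\overlined(sp,(s-t)p)=g(t)<\varepsilon$, whence $|f_p(s)-f_p(s-t)|<\eta$. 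Thus $\{t:g(t)<\varepsilon\}\subseteq\{t:\|f_p-f_p(\cdot-t)\|_\infty\le\eta\}$, and the latter is relatively dense. As $f_p$ is continuous and bounded, this means $f_p$ is Bohr almost periodic; since $f$ was arbitrary, $p$ is Bohr almost periodic.

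The implication (i)$\Rightarrow$(iii) is the crux, and the main obstacle is to establish relative density of the sublevel sets $\{t:g(t)<\varepsilon\}$: here $g(t)=\sup_{s}d(sp,(s+t)p)$ must be controlled uniformly along the whole orbit, a uniformity not available from Lemma~\ref{l:metric-char-bap-wap} alone but supplied by Theorem~\ref{t:char-bap-point}. Assuming $p$ Bohr almost periodic, that theorem tells us that $\overlined$ is continuous on $\overline{Gp}$ (part (iv)) and that $\overline{Gp}$ is minimal. Continuity of $\overlined(p,\cdot)$ on $\overline{Gp}$, composed with the continuous orbit map $t\mapsto tp$, shows that $g$ is continuous. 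Fixing $\varepsilon>0$, the set $U:=\{y\in\overline{Gp}:\overlined(p,y)<\varepsilon\}$ is an open neighbourhood of $p$ in the compact space $\overline{Gp}$, and $\{t:g(t)<\varepsilon\}=\{t:tp\in U\}$ is the corresponding return-time set. By minimality each orbit is dense, so $\{(-s)U:s\in G\}$ is an open cover of the compact set $\overline{Gp}$; from a finite subcover $(-s_1)U,\dots,(-s_n)U$ one sees that for every $t$ there is $j$ with $(s_j+t)p\in U$, i.e. $t\in\{r:g(r)<\varepsilon\}-s_j$. Hence $\{t:g(t)<\varepsilon\}$ is relatively dense (with compact witness $\{-s_1,\dots,-s_n\}$). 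Since $g$ is continuous, the criterion from the first paragraph now yields that $g$ is Bohr almost periodic, completing the proof.
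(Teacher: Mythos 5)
Your proof is correct. For (i)$\Rightarrow$(iii) you follow essentially the paper's route: both arguments invoke Theorem~\ref{t:char-bap-point} to obtain minimality of $\overline{Gp}$ and continuity of $\overlined$ on the orbit closure, and then use invariance of $\overlined$ together with its triangle inequality to convert small values of $g(t)=\overlined(p,tp)$ into almost periods of $g$; the one point where you go beyond the paper is that you actually prove relative density of the return-time set $\{t : \overlined(p,tp)<\varepsilon\}$ by the standard compactness/covering argument, whereas the paper simply asserts that minimality plus continuity yields a relatively dense set of such $t$. The genuine difference is in (iii)$\Rightarrow$(i). The paper deduces from (iii) that each function $d^{(s)}_p$ is Bohr almost periodic and then appeals to Lemma~\ref{l:metric-char-bap-wap}, whose proof runs through the Stone--Weierstra\ss{} machinery of Lemma~\ref{l:char-ap-functions}. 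You instead exploit the fact that $\overlined$ dominates $d$ uniformly along the whole orbit: if $\overlined(p,tp)<\varepsilon$, then $d(sp,(s-t)p)<\varepsilon$ for \emph{every} $s\in G$, so uniform continuity of an arbitrary $f\in C(X)$ immediately makes $t$ an almost period of $f_p$. This bypasses the algebra/separation argument entirely and is more elementary; note that this shortcut is special to the sup-metric setting of Bohr almost periodicity---for the averaged metric $D$ of Section~\ref{s:main} one only controls averages rather than suprema, which is why the mean almost periodic analogue genuinely needs Stone--Weierstra\ss{}, and why your argument does not simply generalize that lemma but replaces it in this particular case.
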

\begin{proof}
We show that (i) implies (ii): By  (i) and Theorem
\ref{t:char-bap-point}, the orbit closure of $p$ is minimal and the
function $\overlined$ is a continuous metric on $\overline{Gp}$.
 Let $\varepsilon
>0$ be given.  As $\overlined$ is continuous and every Bohr almost
periodic point has a minimal orbit, there exists a relatively dense
set $R\subset G$ with $\overlined(sp,p)< \varepsilon$ for all $s\in
R$. As $\overlined$ is invariant, this gives
$$|\overlined(p,(t+s)p) - \overlined(p,tp)| \leq \overlined ((t+s)p,tp)) \leq
\overlined (sp,p)<\varepsilon$$ for all $t\in G$ and $s\in R$. As
$\varepsilon >0$ was arbitrary this gives (ii).

\smallskip

We now show that (ii) implies that $p$ is Bohr almost periodic: It
is not hard  to see that (ii) implies that $t \mapsto d(sp,(t+s),p)$
is Bohr almost periodic for any $s\in G$. This easily implies that
(iv) of Lemma \ref{l:metric-char-bap-wap} holds and (i) follows from
that lemma.
\end{proof}

\section{Application to measure dynamical
systems}\label{s:application} In this section, we use our results to
shed a light on recent investigations of aperiodic order. A
fundamental  issue in the study of aperiodic order is pure point
diffraction, see e.g. \cite{BL} for a recent survey. Indeed,
understanding of pure point diffraction has been a driving force in
the field, see e.g. the survey article \cite{Lag}. Recently, a
complete understanding of pure point diffraction  via mean almost
periodicity has been provided
 in \cite{LeSpStr}.  That article mostly
deals with single  measures. However, it also includes results on
pure point spectrum of certain dynamical systems viz measure
dynamical systems. Here, we discuss how our results allow one to
provide a different approach to these results.

\bigskip

We start with a discussion of translation bounded measure dynamical
systems. Such dynamical systems were brought forward in \cite{BL1}
to provide a systematic framework to study aperiodic order. In our
exposition we follow \cite{BL1} to which we refer for further
details, proofs and references.

We denote by $C_c (G)$ the vector space of continuous complex valued
functions on $G$ with compact support. This space is equipped with
the inductive limit topology of the injections
$$C_K (G)\longrightarrow C_c (G),\qquad \varphi \mapsto \varphi,$$
for $K\subset G$ compact. Here,  $C_K (G)$ denotes the  subspace of
$C_c (G)$ consisting of functions with support in $K$. The measures
on $G$ are the elements of the dual space of $C_c (G)$. The total
variation $|\mu|$ of a measure $\mu$ is the smallest positive
measure with
$$|\mu(\varphi)|\leq |\mu|(\varphi)$$
for all $\varphi \in C_c (G)$ with $\varphi \geq 0$. A measure $\mu$
on $G$ is called \textit{translation bounded} if its total variation
$|\mu|$ satisfies {
$$\| \mu \|_K:=\sup |\mu| (t + U) < \infty$$}
for one (all) relatively compact open $U$ in $G$. We denote that set
of all translation bounded measures by $M^\infty (G)$ and equip it
with the vague topology. Then, $G$ admits a natural action on
$M^\infty (G)$ by translations. More specifically, for $t\in G$ and
$\mu\in M^\infty (G)$ the measure $t\mu $ is defined by
$$t\mu (\varphi) = \mu (\varphi (\cdot + t))$$
 for all $\varphi \in C_c (G)$.

{ A subset $\varOmega \subset M^\infty (G)$ which is invariant under
the translation action is compact if and only if it is vaguely
closed and there exists a constant $C$ such that \cite[Thm.~A.8]{SS}
\begin{displaymath}
\| \mu \|_U \leq C \quad \text{ for all } \mu \in \varOmega \,.
\end{displaymath}}

Whenever  $\varOmega$ is a compact subset  of $M^\infty (G)$,  which
is invariant under the translation action to and $m$ is an
invariant probability measure on $X$, we call $(X,G,m)$ a
\textit{dynamical system of translation bounded measures} or just
TMDS for short. If $G$ is second countable than any TMDS is
metrizable. Hence, the theory developed above applies to TMDS
whenever $G$ is second countable.  Consider now an arbitrary TMDS
$(\varOmega,G,m)$ and define for any $\varphi \in C_c (G)$ the
function
$$N_\varphi : \varOmega \longrightarrow \CC, \qquad N_\varphi (\omega) = \omega (\varphi).$$
Then, $N_\varphi$ belongs to $C(\varOmega)$. Also, there exists  a
unique translation bounded measure $\gamma = \gamma^m$ on $(X,G,m)$
with
\begin{equation*}
\gamma (\varphi \ast \widetilde{\psi}) = \langle N_\varphi,
N_\psi\rangle
\end{equation*} for all $\varphi,\psi \in
C_c (G)$ and all $t\in G$.\footnote{Note that \cite{BL1} uses a
different sign in the definition of $\nfunction$ (called $f$ there)
as well as has the inner product linear in the second argument. This
results in a different display of the formula for $\gamma$, viz.
$(\gamma \ast \widetilde{\varphi}\ast \psi )(0) = \langle f_\varphi,
f_\psi\rangle$.}  The measure $\gamma$ is called the
\textit{autocorrelation} of the TMDS. This measure allows for a
Fourier transform $\widehat{\gamma}$ which is a (positive) measure
on $\widehat{G}$. It is known as \textit{diffraction} of the TMDS.
Of particular interest in this theory are now those TMDS whose
diffraction is a pure point measure. By a main result of \cite{BL1}
(see references there as well for earlier results) the diffraction
of a TMDS is pure point if and only if the TMDS has pure point
spectrum. So, for this reason, TMDS with pure point spectrum are of
utmost relevance in the field of aperiodic order. One particular
question is the calculation of the atoms of $\widehat{\gamma}$.
Here, the basic idea is that
\[
\widehat{\gamma} (\{\xi\}) = \lim_{n \to
\infty}\left|\frac{1}{|B_n|} \int_{B_n} \xi (t)\, \dd\omega
(t)\right|^2
\]
(with $(B_n)$ being a F\o lner sequence).  Validity
of this formula is often discussed under the heading of Bombieri--Taylor conjecture.

Having provided  the framework of TMDS, we  now discuss how the
theory developed in the previous section can be used in the study of
aperiodic order.

A translation bounded measure $\omega$ is called \textit{mean almost
periodic} (\textit{Besicovitch almost periodic}, \textit{Weyl almost periodic} respectively) if for any $\varphi \in C_c (G)$
the function
\[
\omega \ast \varphi : G\longrightarrow \CC,\qquad  (\omega \ast
\varphi )(t) = \int \varphi (t-s)\, \dd\omega (s),
\]
is mean almost periodic (Besicovitch almost periodic, Weyl almost periodic respectively).

\begin{prop}\label{p:observation}
Let $(\varOmega,G,m)$ be a TMDS and $(B_n)$ a F\o lner sequence.
Then, for $\omega\in\varOmega$ the following assertions are
equivalent:

\begin{itemize}
\item[(i)]  The measure $\omega$ is mean almost periodic
(Besicovitch almost periodic, Weyl almost periodic,  Weak almost
periodic, Bohr almost periodic).

\item[(ii)] The function $t\mapsto N_\varphi (t\omega) $ is mean almost periodic
(Besicovitch almost periodic, Weyl almost periodic, Weak almost
periodic, Bohr almost periodic)
 for any
$\varphi \in C_c (G)$.

\item[(iii)] The point $\omega\in \Omega$ is mean almost periodic (Besicovitch
almost periodic, Weyl almost periodic, Weak almost periodic, Bohr
almost periodic).

\end{itemize}

\end{prop}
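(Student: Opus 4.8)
The plan is to prove the chain (iii)$\Leftrightarrow$(ii)$\Leftrightarrow$(i), handling the five notions of almost periodicity in parallel. The bridge between the measure side and the dynamical side is the elementary identity coming straight from the definitions of the action and of $N_\varphi$: for $\omega\in\varOmega$, $\varphi\in C_c(G)$ and $t\in G$,
\[
N_\varphi(t\omega)=(t\omega)(\varphi)=\omega(\varphi(\cdot+t))=\int_G\varphi(s+t)\,\dd\omega(s)=(\omega\ast\check\varphi)(-t),
\]
where $\check\varphi(s):=\varphi(-s)$. Since $\varphi\mapsto\check\varphi$ is a bijection of $C_c(G)$, the family of orbit samplings $\{\,t\mapsto N_\varphi(t\omega):\varphi\in C_c(G)\,\}$ and the family of convolutions $\{\,\omega\ast\psi:\psi\in C_c(G)\,\}$ coincide up to reflection of the argument. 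I would also record that each $\omega\ast\psi$ is bounded and uniformly continuous (as $\omega$ is translation bounded and $\psi\in C_c(G)$), so the function-level notions of almost periodicity genuinely apply to it.

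For (iii)$\Leftrightarrow$(ii) I would argue entirely on the dynamical side, so that no reflection enters. Each $N_\varphi$ lies in $C(\varOmega)$ and $(N_\varphi)_\omega(t)=N_\varphi(t\omega)$, so (ii) says precisely that every $(N_\varphi)_\omega$ is almost periodic (in the relevant sense). The crucial structural input is that $\{N_\varphi:\varphi\in C_c(G)\}$ separates the points of $\varOmega$: two measures agreeing on all $C_c(G)$ test functions coincide. Hence $\{f\in C(\varOmega):f_\omega\text{ is almost periodic}\}$ contains a separating family, and the separating-family criterion of the appropriate characterization lemma (Lemma~\ref{l:char-ap-functions} for the mean case, Lemma~\ref{l:metric-char-bap-wap} for the weak and Bohr cases, together with their Besicovitch and Weyl analogues) yields (iii). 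The converse (iii)$\Rightarrow$(ii) is immediate, since (iii) forces every $f_\omega$, in particular every $(N_\varphi)_\omega$, to be almost periodic.

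For (i)$\Leftrightarrow$(ii) I would invoke the identity above: by it, (ii) holds iff every reflected convolution $(\omega\ast\check\varphi)(-\cdot)$ is almost periodic, whereas (i) asserts that every $\omega\ast\psi$ is almost periodic. Thus the entire content of (i)$\Leftrightarrow$(ii) is the stability of each almost periodicity class under the reflection $f\mapsto\check f$, and I expect this to be the main obstacle. For the Bohr and weakly almost periodic classes it is painless: the supremum norm and the weak topology of $C_u(G)$ are blind to orientation, reflection is a linear isometry, and a set $R\subset G$ is relatively dense iff $-R$ is. For the averaged classes (mean, Besicovitch, Weyl) one passes through the change of variables $\overline{M}_{(-B_n)}(h)=\overline{M}_{(B_n)}(\check h)$ (and its level-$n$ analogue for $\overline{M}_n$), which reduces reflection stability to the interchangeability of $(B_n)$ and $(-B_n)$; this orientation bookkeeping is exactly the delicate point, since for a one-sided averaging sequence the two need not agree, and one must use that the Følner sequence at hand may be taken reflection-compatible so that the averaged notions do not detect orientation. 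Assembling the three equivalences then gives the proposition uniformly across all five notions.
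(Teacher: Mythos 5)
Your architecture is the same as the paper's: (ii)$\Leftrightarrow$(iii) via point separation by the $N_\varphi$ together with the Stone--Weierstra\ss{}-type characterization lemmas, and (i)$\Leftrightarrow$(ii) via the convolution identity. The (ii)$\Leftrightarrow$(iii) half of your argument is correct and is exactly the paper's. The divergence is in (i)$\Leftrightarrow$(ii). The paper simply asserts $N_\varphi(t\omega)=(\omega\ast\widetilde\varphi)(t)$, with \emph{no reflection of the variable} $t$, after which (i)$\Leftrightarrow$(ii) is immediate because $\varphi\mapsto\widetilde\varphi$ is a bijection of $C_c(G)$. Under the action convention the paper literally prints, $t\mu(\varphi)=\mu(\varphi(\cdot+t))$, the correct identity is the one you derived, $N_\varphi(t\omega)=(\omega\ast\check\varphi)(-t)$, and then the reflection is the entire difficulty; you located the crux correctly, and more carefully than the paper does.

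The gap is in how you dispose of that reflection for the averaged notions: you appeal to the claim that ``the F\o lner sequence at hand may be taken reflection-compatible.'' That move is not available. The proposition fixes one arbitrary F\o lner sequence $(B_n)$ and all three conditions refer to that same sequence; you cannot swap it for a symmetric one, since mean (and Besicovitch) almost periodicity genuinely depend on the F\o lner sequence --- the paper's own remark after the unique ergodicity corollary in Section 2 makes exactly this point. Moreover the obstruction is real, not bookkeeping. In $G=\ZZ$ let $y$ be the Thue--Morse $0/1$-sequence, put $\omega=\sum_{k\geq 1,\, y(k)=1}\delta_{-k}$, let $\varOmega=\overline{\ZZ\omega}$ (compact by the criterion quoted in Section \ref{s:application}), and take $B_n=\{n^2,\dots,n^2+n\}$. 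Every $\omega\ast\psi$ with $\psi\in C_c(\ZZ)$ is supported in a left half-line, so for every $t$ its $(B_n)$-averaged translation defect is $0$ and (i) holds in the mean and Besicovitch senses. Yet $t\mapsto N_{1_{\{0\}}}(t\omega)$ equals $y(t)$ for $t\geq 1$ and $0$ otherwise, and by unique ergodicity of the Thue--Morse subshift its $(B_n)$-averaged defects converge to the Thue--Morse disagreement density $d_{TM}(t)$, uniformly in the position of the intervals; if $\{t: d_{TM}(t)<\varepsilon\}$ were relatively dense for every $\varepsilon>0$, then every Thue--Morse point would be mean almost periodic and Theorem \ref{t:main} would force pure point spectrum for Thue--Morse, a contradiction. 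So (ii) and (iii) fail while (i) holds: with the reflected identity and the literal sign conventions, the equivalence is genuinely false for this one-sided $(B_n)$, and no symmetrization can repair a statement about that specific sequence. The equivalence is rescued only at the level of conventions --- matching the signs in the action and the convolution so that the identity comes out reflection-free, as in the paper's displayed formula (equivalently, absorbing the reflection into the definition of an almost periodic measure) --- after which (i)$\Leftrightarrow$(ii) needs nothing beyond bijectivity of $\varphi\mapsto\check\varphi$. As written, your proof of (i)$\Leftrightarrow$(ii) for the mean, Besicovitch and Weyl cases does not go through.
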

\begin{proof}
We only discuss mean almost periodicity. The remaining
statements follow analogously.

(i)$\Longleftrightarrow$(ii):  A short computation shows
$$N_\varphi (t\omega) = (\omega \ast \widetilde{\varphi})(t),$$
where $\widetilde{\varphi} : G\longrightarrow \CC$, $t\mapsto
\overline{\varphi}(-t)$. This gives that $\omega$ is mean almost
periodic if and only if $t\mapsto N_\varphi (t\omega)$, $\varphi \in
C_c (G)$,  is mean almost periodic and the equivalence between (i)
and (ii) follows.

(iii)$\Longrightarrow$(ii): This follows easily as any $N_\varphi$,
$\varphi \in C_c (G)$, is a continuous function on $\Omega$.

(ii)$\Longrightarrow$(iii): It is not hard to see that the set of
$N_\varphi$, $\varphi \in C_c (G)$, separates the points of $\Omega$
and is closed under complex conjugation.  Hence, the algebra
generated by the $N_\varphi$ is dense in the continuous functions on
$\Omega$ with respect to the supremum norm (see e.g. \cite{BL} for
further discussion of this type of argument). This gives that (ii)
implies (iii).
\end{proof}

When dealing with TMDS $(\Omega,G,m)$  we can now use
the previous proposition  to replace the assumption that $\omega\in
\Omega$ is mean almost periodic as element of the dynamical system
$(\Omega,G,m)$ by the assumption that $\omega$ is a mean almost
periodic measure (and, similarly,  with mean almost periodic
replaced by Besicovitch almost periodic, Weyl almost periodic, weak
almost periodic, Bohr almost periodic). This allows then for
reformulations of our main results. We only state one of the
applications and leave the remaining ones to the reader.

\begin{theorem} Let $(\varOmega,G,m)$ be an ergodic  TMDS. Assume that $G$ is
second countable and that $(B_n)$ is a F\o lner sequence along which
Birkhoff's ergodic theorem holds.  Then, the following assertions
are equivalent:
\begin{itemize}
\item[(i)] $(\varOmega,G,m)$ has pure point spectrum.

\item[(ii)] Almost every $\omega\in \varOmega$ is a mean almost periodic measure.

\end{itemize}

\begin{remark} The previous theorem  was first shown in \cite{LeSpStr}. Here,
we have provided a different proof.
\end{remark}
\end{theorem}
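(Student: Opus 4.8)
The plan is to combine Theorem \ref{t:main} with Proposition \ref{p:observation}, since essentially all of the substantive work has already been carried out and the statement is a translation of an established result into the language of measures. First I would record the standing hypotheses that make the general machinery applicable: second countability of $G$ makes the TMDS $(\varOmega,G,m)$ metrizable (as noted in the discussion preceding the statement), so the theory of Section \ref{s:main}, and in particular Theorem \ref{t:main}, applies verbatim to this system, with the chosen F\o lner sequence $(B_n)$.

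Next I would invoke Theorem \ref{t:main} directly. Since $(\varOmega,G,m)$ is ergodic and Birkhoff's ergodic theorem holds along $(B_n)$, that theorem provides the equivalence of pure point spectrum (condition (i) here) with its condition (iv), namely that almost every $\omega\in\varOmega$ is a mean almost periodic \emph{point} of the dynamical system. This already yields the equivalence of (i) with the statement ``almost every $\omega$ is a mean almost periodic point''.

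The only remaining step is to bridge the gap between mean almost periodicity of $\omega$ as a point of $(\varOmega,G,m)$ and mean almost periodicity of $\omega$ as a translation bounded measure. This is exactly the content of Proposition \ref{p:observation}, whose equivalence of (iii) and (i) identifies these two notions for each individual $\omega\in\varOmega$. Applying this equivalence pointwise, the set of mean almost periodic measures in $\varOmega$ coincides \emph{as a set} with the set of mean almost periodic points, so the two ``almost every'' statements are literally the same assertion about the same full-measure subset of $\varOmega$. Chaining the two equivalences then gives (i)$\Longleftrightarrow$(ii).

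I do not anticipate a genuine obstacle: both ingredients are already available above, and no new estimate is needed. The only points deserving an explicit word are the metrizability remark (which guarantees Theorem \ref{t:main} is applicable) and the observation that Proposition \ref{p:observation} is a statement about each fixed $\omega$, so that it can be applied simultaneously on a full-measure set without any measurability or uniformity concern. Everything else is a direct substitution of the measure-theoretic notion for the dynamical one.
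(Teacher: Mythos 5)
Your proposal is correct and is precisely the argument the paper intends: the paragraph preceding the theorem states that Proposition \ref{p:observation} lets one replace ``mean almost periodic point'' by ``mean almost periodic measure'' in the main results, so the theorem follows by chaining Theorem \ref{t:main} (equivalence of (i) and (iv), applicable since ergodicity plus Birkhoff along $(B_n)$ guarantees generic points and the second countability of $G$ gives metrizability) with the pointwise identification of the two notions. Nothing is missing from your argument.
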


\section{Abstract generalisations}\label{s:abstract}
When closing the article, it may be instructive to stop a moment to
have a look at the overall theme of this article from a more
abstract point of view. The general approach in this article may be
described as follows: We consider a dynamical system $(X,G)$ and say
that a point $x\in X$ is $(*)$ almost periodic if $\cA_x$ consists
of $(*)$ almost periodic functions only. Then, the preceding
sections have been devoted to a thorough study of consequences of
existence of $(*)$ almost periodic points for $(*)$ being replaced
by Bohr, weak, Weyl, Besicovitch and Mean (and in this order these
are increasingly weaker notions of almost periodicity).  Now, of
course, any other concept of almost periodicity for a function could
also be taken as the starting point of the theory. Then, some of our
considerations will easily carry over. This is discussed in this
section.

\smallskip

Let  $\Cu (G)$ be the set of  uniformly continuous bounded functions
on $G$ and consider a dynamical system $(X,G)$. Whenever  we are
given an $\AAA \subset \Cu(G)$ we can define for $p \in X$
\begin{displaymath}
\AAA_p:=\{ f \in C(X):f_p \in \AAA \}
\end{displaymath}
Then, $\AAA_p$ inherits various properties of $\AAA$. In particular,
if $\AAA$ is an algebra, then so is $\AAA_p$ and if $1 \in \AAA$
then $\AAA_p$ contains the constant function $1$. Moreover, if
$\AAA$ is closed in $(\Cu(G), \| \cdot \|_\infty)$ then $\AAA_p$ is
closed in $(C(X), \| \cdot \|_\infty)$. Then, as abstraction of Lemma
\ref{l:char-ap-functions} (with the same proof), we obtain the
following lemma.

\begin{lemma}\label{l:abstract-stone-w}
Let $\AAA$ be a closed subalgebra of $\Cu(G)$ containing the
constant function 1. Then, the following assertions are equivalent for $p\in
X$:
\begin{itemize}
\item[(i)] $\AAA_p \subseteq \AAA$.
  \item [(ii)] $\AAA_p =C(X)$.
  \item [(iii)] $\AAA_p$ separates the points of $\overline{Gp}$.
\item [(iv)] For each $s \in G$ the function $d^{(s)} \in C(X)$
with $d^{(s)} (y):= d(sp,y)$ belongs to $\AAA_p$.
\end{itemize}
\end{lemma}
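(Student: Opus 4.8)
The plan is to mirror the proof of Lemma \ref{l:char-ap-functions}, exploiting that the pullback $\AAA_p$ inherits the structural features of $\AAA$. First I would record the standing properties: since $\AAA$ is a closed subalgebra of $\Cu(G)$ containing the constant $1$ (and, as in all the concrete instances, invariant under complex conjugation), the set $\AAA_p=\{f\in C(X):f_p\in\AAA\}$ is a closed, conjugation-invariant, unital subalgebra of $(C(X),\|\cdot\|_\infty)$ — unitality coming from $1_p=1\in\AAA$. With this in hand, (i)$\Longleftrightarrow$(ii) is immediate from the definition of $\AAA_p$: the inclusion asserted in (i) amounts to $f_p\in\AAA$ for every $f\in C(X)$, which is exactly $\AAA_p=C(X)$. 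So the content is concentrated in the equivalence of (ii), (iii), (iv), which I would establish along the cycle (ii)$\Rightarrow$(iv)$\Rightarrow$(iii)$\Rightarrow$(ii).

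The implication (ii)$\Rightarrow$(iv) is trivial, since $\AAA_p=C(X)$ already contains each $d^{(s)}$. For (iv)$\Rightarrow$(iii) I would argue that the functions $d^{(s)}$, $s\in G$, themselves separate the points of $\overline{Gp}$: given $y\neq z$ in $\overline{Gp}$, density of the orbit $Gp$ in $\overline{Gp}$ lets me choose $s\in G$ with $d(sp,y)<\tfrac12 d(y,z)$, so that $d^{(s)}(y)=d(sp,y)<\tfrac12 d(y,z)$ while $d^{(s)}(z)=d(sp,z)\ge d(y,z)-d(sp,y)>\tfrac12 d(y,z)$ are distinct. By (iv) these $d^{(s)}$ lie in $\AAA_p$, whence (iii).

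The heart of the argument is (iii)$\Rightarrow$(ii), which I would settle by the Stone--Weierstra\ss{} theorem. The one genuine subtlety is to upgrade separation of the orbit closure to separation of all of $X$. The key observation is that any $f\in C(X)$ vanishing on $\overline{Gp}$ satisfies $f_p\equiv0\in\AAA$, hence $f\in\AAA_p$; thus $\AAA_p$ contains every continuous function vanishing on $\overline{Gp}$. Combining this with (iii) and a Urysohn-type choice of cut-off handles a pair of points not both lying in $\overline{Gp}$ (pick $f\in C(X)$ vanishing on $\overline{Gp}$ together with one of the two points but not the other), so $\AAA_p$ separates the points of the whole space $X$. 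Being in addition a closed, conjugation-invariant, unital subalgebra of $C(X)$, the Stone--Weierstra\ss{} theorem forces $\AAA_p=C(X)$, i.e. (ii).

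I expect the main obstacle to be precisely this last step: checking cleanly that separation on $\overline{Gp}$, augmented by the presence in $\AAA_p$ of all functions vanishing on $\overline{Gp}$, yields separation on the full compact space $X$. Everything else is bookkeeping — transporting the algebra, closedness, unitality and conjugation-invariance from $\AAA$ to $\AAA_p$ — and the proof is otherwise identical in spirit to that of Lemma \ref{l:char-ap-functions}.
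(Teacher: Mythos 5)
Your proposal is correct and coincides with the paper's argument: the paper proves this lemma simply by declaring it an abstraction of Lemma \ref{l:char-ap-functions} ``with the same proof'', and your cycle --- (i)$\Leftrightarrow$(ii) by definition, (ii)$\Rightarrow$(iv) trivially, (iv)$\Rightarrow$(iii) since the $d^{(s)}$ separate the points of $\overline{Gp}$, and (iii)$\Rightarrow$(ii) by Stone--Weierstra\ss{} after using the functions vanishing on $\overline{Gp}$ (plus a Urysohn cut-off) to upgrade separation from the orbit closure to all of $X$ --- is exactly that proof. Your parenthetical caveat is also apt: the paper's stated hypotheses omit closure of $\AAA$ under complex conjugation, which the Stone--Weierstra\ss{} step genuinely needs and which does hold in all the instances ($\AAA^N$, $\AAA^T$) to which the paper applies the lemma.
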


A particular way to obtain a closed algebra $\AAA$ is by suitable
seminorms. This is discussed next: Call a seminorm $N$ on $\Cu (G)$
\textit{admissible} if it is $G$-invariant and satisfies
\begin{itemize}
\item $N(f) \leq N(g)$ whenever $|f|\leq g$
\item $N(1) = 1$
\end{itemize}
Note that any admissible seminorm $N$ satisfies $N\leq
\|\cdot\|_\infty$  (as  $|f|\leq \|f\|_\infty \cdot 1$).

\begin{remark}[Examples] It is not hard to see that
 $\|\cdot\|_\infty$ and $ \overline{M}\circ |\cdot|$ as well as
  \begin{displaymath}
N(f):= \limsup_{n\to\infty} \sup_{t \in G} \frac{1}{|B_n|} \int_{t+B_n} \left|f(t) \right| \dd t
  \end{displaymath}
are admissible seminorms on $C_u (G)$.
\end{remark}

\begin{definition} (a) We say that an $f\in C_u (G)$ is \textit{$N$-almost periodic} if for
any $\varepsilon >0$ the set
$$\{t\in G: N(|f - f(\cdot - t)|) <\varepsilon\}$$
is relatively dense in $G$.

(b) We say that an $f\in C_u (G)$ is \textit{$N$-trig almost
periodic} if for any $\varepsilon >0$, there exists some
trigonometric polynomial $P$ such that $N(|f-P|) <\varepsilon$.
\end{definition}

\begin{lemma} Let $N$ be an admissible seminorm. Then,
\[
\AAA^N:= \{ f \in C_u(G) : f \mbox{ is } N -\mbox{almost periodic } \}
\]
and
\[
\AAA^T:= \{ f \in C_u(G) : f \mbox{ is } N -\mbox{trig almost periodic } \}
\]
are closed subalgebras of $\Cu(G)$.  Both subalgebras contain the
Bohr almost periodic functions and  $\AAA^T \subseteq \AAA^N$ holds.

In particular $1 \in \AAA^N$.
\end{lemma}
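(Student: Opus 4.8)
The plan is to reduce everything to the classical theory of Bohr almost periodic functions by associating to each $f\in \Cu(G)$ the function
\[
\rho_f : G \longrightarrow [0,\infty), \qquad \rho_f(t) := N(|f - f(\cdot - t)|).
\]
From admissibility of $N$ (monotonicity, $G$-invariance, and the subadditivity and homogeneity of a seminorm) one reads off directly that $\rho_f$ is nonnegative, symmetric ($\rho_f(-t)=\rho_f(t)$), subadditive ($\rho_f(s+t)\le \rho_f(s)+\rho_f(t)$), bounded by $2\|f\|_\infty$, and vanishes at $e$. Since $N\le\|\cdot\|_\infty$ and $f$ is uniformly continuous, $\rho_f(t)\le \|f-f(\cdot-t)\|_\infty\to 0$ as $t\to e$; together with the subadditivity estimate $|\rho_f(t)-\rho_f(s)|\le \rho_f(t-s)$ this shows $\rho_f\in\Cu(G)$. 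With these properties in hand, the very argument of Lemma \ref{l:map-via-bohr} (based on the inequality $(\clubsuit)$) yields the crucial reduction: $f$ is $N$-almost periodic if and only if $\rho_f$ is Bohr almost periodic. I will also record the elementary observation that if $h\in\Cu(G)$ is Bohr almost periodic with $h\ge 0$ and $h(e)=0$, then $\{t:h(t)<\varepsilon\}$ is relatively dense: evaluating $\|h-h(\cdot-t)\|_\infty<\varepsilon$ at the point $s=t$ gives $h(t)<\varepsilon$.

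Given this reduction, the algebra property of $\AAA^N$, which I expect to be the main obstacle since it is precisely the problem of producing common almost periods, becomes routine. Using monotonicity, subadditivity and homogeneity of $N$ one obtains the pointwise majorizations
\[
\rho_{f+g}\le \rho_f+\rho_g, \qquad \rho_{\lambda f}=|\lambda|\,\rho_f, \qquad \rho_{fg}\le \|f\|_\infty\,\rho_g+\|g\|_\infty\,\rho_f.
\]
If $f,g\in\AAA^N$, the reduction makes $\rho_f,\rho_g$ Bohr almost periodic; since the Bohr almost periodic functions are closed under addition and scalar multiplication, each majorant above is again Bohr almost periodic, nonnegative and vanishing at $e$. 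The elementary observation then shows its sublevel set is relatively dense, hence so is the larger set $\{t:\rho_{f+g}(t)<\varepsilon\}$ (and likewise for $\lambda f$ and $fg$), proving $f+g,\lambda f, fg\in\AAA^N$. That $1\in\AAA^N$, and more generally that every Bohr almost periodic function lies in $\AAA^N$, is immediate from the inclusion $\{t:\|f-f(\cdot-t)\|_\infty<\varepsilon\}\subseteq\{t:\rho_f(t)<\varepsilon\}$.

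For closedness I would argue directly, bypassing the reduction. From $\rho_{f-g}\le 2N(|f-g|)$ and $\rho_f\le \rho_g+\rho_{f-g}$ one gets $\rho_f\le \rho_g+2N(|f-g|)$ pointwise. Hence if $g_n\in\AAA^N$ with $N(|f-g_n|)\to 0$ (in particular if $\|f-g_n\|_\infty\to 0$) and $f\in\Cu(G)$, then for large $n$ the relatively dense set $\{t:\rho_{g_n}(t)<\varepsilon/2\}$ is contained in $\{t:\rho_f(t)<\varepsilon\}$, so $f\in\AAA^N$. Thus $\AAA^N$ is closed in $(\Cu(G),\|\cdot\|_\infty)$ (indeed even under $N$-convergence). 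The same estimate yields $\AAA^T\subseteq\AAA^N$: each trigonometric polynomial is Bohr almost periodic, hence lies in $\AAA^N$, and an element of $\AAA^T$ is by definition an $N$-limit of such polynomials.

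It remains to treat $\AAA^T$. Stability under sums and scalar multiples is immediate from subadditivity and homogeneity of $N$ together with the fact that trigonometric polynomials form an algebra; for products one first picks a polynomial $P$ with $N(|f-P|)$ small, which fixes $\|P\|_\infty$, and then a polynomial $Q$ with $N(|g-Q|)$ small enough to absorb $\|P\|_\infty$, using $|fg-PQ|\le \|g\|_\infty|f-P|+\|P\|_\infty|g-Q|$ so that $PQ$ approximates $fg$ in $N$. Closedness of $\AAA^T$ in the supremum norm follows from $N(|f-P|)\le \|f-f_n\|_\infty+N(|f_n-P|)$. Finally, every Bohr almost periodic function is a uniform limit of trigonometric polynomials by the classical approximation theorem (see Appendix \ref{s:a:bap}), hence, since $N\le\|\cdot\|_\infty$, also an $N$-limit of trigonometric polynomials, so it belongs to $\AAA^T$; in particular $1\in\AAA^T$. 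This completes all assertions, the only genuinely delicate point having been the algebra property of $\AAA^N$, which the passage to $\rho_f$ linearizes.
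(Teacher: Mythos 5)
Your proof is correct, and it takes a genuinely different route from the paper's at the one point where the real difficulty sits, namely the algebra property of $\AAA^N$. The paper proves this by mimicking the proof of Theorem \ref{t:map-algebra}: common $\varepsilon$-almost periods for $f$ and $g$ are produced combinatorially, via Proposition \ref{prop B1} on intersections of sets of the form $(D-D)+V$ for relatively dense $D$, and Lemma \ref{l:intersection-periods}. You avoid that machinery entirely by passing to the displacement function $\rho_f(t)=N(|f-f(\cdot-t)|)$ and proving that $f$ is $N$-almost periodic if and only if $\rho_f$ is Bohr almost periodic --- this is the same device the paper itself uses for \emph{points} in Lemma \ref{l:map-via-bohr} and Lemma \ref{l:periods-vs-periodicity}, but the paper never applies it to prove the algebra property of a class of functions. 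Once that reduction is made, the subadditivity estimates $\rho_{f+g}\le\rho_f+\rho_g$ and $\rho_{fg}\le\|f\|_\infty\rho_g+\|g\|_\infty\rho_f$ hand the problem to the classical fact that Bohr almost periodic functions are closed under sums, and the sublevel sets of a nonnegative Bohr almost periodic majorant vanishing at $e$ are relatively dense. The trade-off is clear: the paper's argument is self-contained (Proposition \ref{prop B1} is proved from scratch and no closure properties of the Bohr class are needed), whereas yours outsources the combinatorial content to the classical Bohr theory --- legitimate here, since Appendix \ref{s:a:bap} states those facts as standard background. In return your argument is shorter, linearizes the common-almost-period problem, and as a bonus shows $\AAA^N$ is closed under the weaker notion of $N$-convergence, not just uniform convergence, which is exactly what makes the inclusion $\AAA^T\subseteq\AAA^N$ fall out for free. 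One further genuine improvement: for the product stability of $\AAA^T$ you choose the approximants sequentially ($P$ first, then $Q$ with accuracy depending on $\|P\|_\infty$), which avoids the truncation-and-reapproximation detour ($Q\rightsquigarrow Q'\rightsquigarrow R$) that the paper needs in the proof of Lemma \ref{l:bap-algebra}, where both accuracies are fixed in advance.
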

\begin{proof} The statement on   $\AAA^N$ is an  abstraction of
Theorem \ref{t:map-algebra} in our context. It can be shown by
replacing  $M\circ |\cdot|$ with $N$ in the proof of this theorem.
Similarly, the statement on $\AAA^T$ is an abstraction of Lemma
\ref{l:bap-algebra} in our context and can be shown by mimicking the
proof of that lemma. This also gives the last statement.
\end{proof}

\begin{definition}[$N$-almost periodic points]
Let $(X,G)$ be a dynamical system and $N$ an admissible seminorm on
$C_u (G)$. We say, that an $x\in X$ is \textit{ $N$-almost periodic}
if $f_x$ is $N$-almost periodic for any $f\in C(X)$, i.e. if
$\AAA^N_x = C(X)$ holds.
\end{definition}

For a continuous metric $d$ on $X$ and $x\in X$ we define the
$$
d^{N,x}: G\longrightarrow
[0,\infty),\qquad d^{N,x} (t) := N((s\mapsto d(sx, (t+s) x) \,.
$$
Reasoning
as in Section \ref{s:main}, we can infer that for any continuous
metric $d$ the function $d^{N,x}$  is uniformly continuous. Having
set up things, we can now discuss the following abstraction of the
results in  Section \ref{s:main} (where we include some details for
the convenience of the reader).

Analogously to Lemma \ref{l:map-via-bohr} we find the following.

\begin{lemma}\label{l:periods-vs-periodicity} Let $(X,G)$ be a dynamical system.
Let $N$ be admissible.  Let $d$ be a continuous metric on $X$.
 Then, the following assertions for $x\in X$ are equivalent:
\begin{itemize}
\item[(i)]  For any $\varepsilon >0$ the  set $$\{t\in G : d^{N,x} (t) < \varepsilon\}$$
is relatively dense.
\item[(ii)] The function $d^{N,x}$ is Bohr almost periodic.
\end{itemize}
\end{lemma}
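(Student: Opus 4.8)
The plan is to follow the proof of Lemma~\ref{l:map-via-bohr} almost verbatim, with the admissible seminorm $N$ playing the role of the upper mean $\overline{M}$. First I would set $f:=d^{N,x}$, so that $f(t)=N(s\mapsto d(sx,(t+s)x))$, and record three elementary facts. The function $f$ vanishes at the origin (since $d(sx,sx)\equiv 0$ and $N(0)=0$); it is bounded, because $N\le\|\cdot\|_\infty$ and $X$ is compact, whence $f(t)\le\diam(X)<\infty$; and it is uniformly continuous, as already noted in the paragraph preceding the lemma. Granting these, the whole equivalence reduces to a single sub-invariance estimate, the analogue of $(\clubsuit)$, namely
\[
|f(t+s)-f(s)|\le f(t)\qquad\text{for all }s,t\in G.
\]

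Establishing this estimate is the crux, and the only place where the structural hypotheses on $N$ are used; I expect it to be the main (though routine) obstacle. I would introduce the auxiliary function $\varrho(x,y):=N(s\mapsto d(sx,sy))$ and verify that $\varrho$ is a $G$-invariant pseudometric on $X$. Symmetry is immediate; the triangle inequality follows because $s\mapsto d(sx,sz)$ is pointwise dominated by $s\mapsto d(sx,sy)+d(sy,sz)$, so the monotonicity axiom $N(g_1)\le N(g_2)$ for $0\le g_1\le g_2$ together with subadditivity of the seminorm $N$ yields $\varrho(x,z)\le\varrho(x,y)+\varrho(y,z)$. Invariance, $\varrho(tx,ty)=\varrho(x,y)$, holds because $s\mapsto d(s(tx),s(ty))$ is the $t$-translate of $s\mapsto d(sx,sy)$ and $N$ is $G$-invariant. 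Using $(t+s)x=s(tx)$ one then has $f(t)=\varrho(x,tx)$, and the reverse triangle inequality for $\varrho$, combined with invariance and symmetry in the form $\varrho(tx,sx)=\varrho(x,(t-s)x)$, gives $|f(t)-f(s)|\le f(t-s)$; specializing $t\mapsto t+s$ produces the displayed estimate.

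With the estimate in hand both implications are immediate. For (ii)$\Longrightarrow$(i), Bohr almost periodicity of $f$ makes $\{t:\|f-f(\cdot-t)\|_\infty<\varepsilon\}$ relatively dense, and evaluating the defining supremum at $s=t$ gives $f(t)=|f(t)-f(0)|\le\|f-f(\cdot-t)\|_\infty<\varepsilon$, so this set is contained in $\{t:f(t)<\varepsilon\}$, which is therefore relatively dense. For (i)$\Longrightarrow$(ii), the estimate yields $\sup_{s}|f(s)-f(s-t)|\le f(t)$, hence $\|f-f(\cdot-t)\|_\infty\le f(t)$; so whenever $f(t)<\varepsilon$ the translate of $f$ by $t$ is within $\varepsilon$ of $f$ in supremum norm, and the relatively dense set $\{t:f(t)<\varepsilon\}$ witnesses Bohr almost periodicity of the bounded, uniformly continuous function $f$. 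This completes the plan.
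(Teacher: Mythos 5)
Your proposal is correct and takes essentially the same route as the paper: the paper proves this lemma simply by noting it is ``analogous to Lemma \ref{l:map-via-bohr}'', whose proof rests exactly on the three ingredients you isolate --- $f(0)=0$, uniform continuity, and the estimate $|f(t+s)-f(s)|\le f(t)$ derived from the invariance and pseudometric properties of the averaged (pseudo)metric. Your auxiliary $\varrho(x,y)=N(s\mapsto d(sx,sy))$ is precisely the $N$-analogue of the paper's $D$, with the admissibility axioms (monotonicity, subadditivity, $G$-invariance) supplying the pseudometric properties that yield the analogue of $(\clubsuit)$, so there is no gap.
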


The following is an abstraction of  both the equivalence between (i)
and (ii) in  Lemma \ref{l:char-ap-functions} (with $N = \overline{M}
\circ |\cdot|$) and Theorem \ref{t:Bohr} (with
$N=\|\cdot\|_\infty$).

\begin{theorem}\label{t:abstract-nonsense-main} Let $(X,G)$ be a dynamical system.
Let $N$ be admissible.  Then, for $x\in
X$ the following assertions are equivalent:
\begin{itemize}
\item[(i)] The point  $x$ is $N$-almost periodic.
\item[(ii)] There exists a continuous metric $d$ on $X$
such that $d^{N,x}$ is Bohr almost periodic.
\item[(iii)] For every  continuous metric $d$ on $X$ the function
$d^{N,x}$ is Bohr almost periodic.
\end{itemize}
\end{theorem}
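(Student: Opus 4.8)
The plan is to establish the cycle (iii)$\Rightarrow$(ii)$\Rightarrow$(i)$\Rightarrow$(iii), drawing on two tools from above: the Stone--Weierstra\ss{}-type characterisation of Lemma \ref{l:abstract-stone-w}, applied to the closed algebra $\AAA=\AAA^N$ and $p=x$, and the equivalence of Lemma \ref{l:periods-vs-periodicity} between relative density of the sublevel sets $\{t:d^{N,x}(t)<\varepsilon\}$ and Bohr almost periodicity of $d^{N,x}$. Since $X$ is compact and metrizable there is at least one continuous metric, so (iii)$\Rightarrow$(ii) is immediate. I would also record at the outset that any continuous metric on the compact space $X$ generates the topology (a continuous bijection from a compact space to a Hausdorff space is a homeomorphism), so that all metrics appearing below are uniformly equivalent.

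For (ii)$\Rightarrow$(i) I would rely on a single pointwise estimate. Writing $d^{(s)}(y):=d(sx,y)$, so that $d^{(s)}_x(r)=d(sx,rx)$, the triangle inequality gives $|d^{(s)}_x(r)-d^{(s)}_x(r-u)|\le d(rx,(r-u)x)$ for all $r,u\in G$, and applying the monotone, $G$-invariant seminorm $N$ yields $N(|d^{(s)}_x-d^{(s)}_x(\cdot-u)|)\le N(r\mapsto d(rx,(r-u)x))=d^{N,x}(-u)$, a bound uniform in $s$. If $d^{N,x}$ is Bohr almost periodic, then Lemma \ref{l:periods-vs-periodicity} makes $\{t:d^{N,x}(t)<\varepsilon\}$ relatively dense, whence its reflection $\{u:d^{N,x}(-u)<\varepsilon\}$ is relatively dense as well; for $u$ in this set every $d^{(s)}_x$ is simultaneously $N$-almost periodic, i.e. each $d^{(s)}$ lies in $\AAA^N_x$. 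By Lemma \ref{l:abstract-stone-w} in the form (iv)$\Rightarrow$(ii) this forces $\AAA^N_x=C(X)$, which is (i).

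For (i)$\Rightarrow$(iii) I would fix an arbitrary continuous metric $d$ (rescaled so $d\le 1$) and first pass to a conveniently built metric. Choose a countable family $(f_j)$ in the unit ball of $C(X)$ separating the points, positive weights $c_j$ with $\sum_j c_j<\infty$, and set $e(y,z):=\sum_j c_j|f_j(y)-f_j(z)|$, a continuous metric, rescaled so that $e\le 1$. Since $x$ is $N$-almost periodic each $(f_j)_x$ is $N$-almost periodic, and, exactly as in Lemma \ref{l:map-via-bohr}, the function $G_j(t):=N(|(f_j)_x-(f_j)_x(\cdot-t)|)$ is Bohr almost periodic: one checks $G_j(0)=0$, uniform continuity (from $N\le\|\cdot\|_\infty$ and uniform continuity of $(f_j)_x$), and $|G_j(t+s)-G_j(s)|\le G_j(t)$, while relative density of $\{G_j<\varepsilon\}$ is precisely $N$-almost periodicity of $(f_j)_x$. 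Subadditivity, $G$-invariance, and $\|\cdot\|_\infty$-continuity of $N$ then give $e^{N,x}(t)\le\sum_j c_j G_j(t)=:H(t)$, where $H$ is a uniform limit of (finite sums of) Bohr almost periodic functions, hence Bohr almost periodic, with $H(0)=0$. Therefore $\{t:e^{N,x}(t)<\varepsilon\}\supseteq\{t:H(t)<\varepsilon\}$ is relatively dense for every $\varepsilon$. Finally I would transfer this to $d$ by the argument of Lemma \ref{l:independence}: generalising Proposition \ref{p:simple} to an arbitrary admissible $N$ (the inequalities $N(1_{A(f,\delta)})\le\tfrac1\delta N(f)$ and $N(f)\le N(1_{A(f,\delta)})+\delta$ hold verbatim, using monotonicity, homogeneity and $N(1)=1$), uniform equivalence of $d$ and $e$ produces for each $\varepsilon$ a $\delta$ with $e^{N,x}(t)<\delta\Rightarrow d^{N,x}(t)<\varepsilon$. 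Hence $\{t:d^{N,x}(t)<\varepsilon\}$ is relatively dense, and Lemma \ref{l:periods-vs-periodicity} yields (iii).

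The main obstacle is the implication (i)$\Rightarrow$(iii). The delicate point is that relative density is \emph{not} preserved under finite intersections, so one cannot merely intersect the almost-period sets of the individual $(f_j)_x$. This is circumvented by promoting each $N$-almost-period function $G_j$ to a genuine Bohr almost periodic function and then summing: the sublevel sets of a Bohr almost periodic $H$ with $H(0)=0$ are automatically relatively dense. The second technical ingredient is the verification that Proposition \ref{p:simple} — and with it the metric-independence argument of Lemma \ref{l:independence} — survives replacing $\overline{M}\circ|\cdot|$ by an arbitrary admissible seminorm $N$; this I expect to be routine given the defining properties of admissibility.
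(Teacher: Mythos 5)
Most of your proposal is sound and runs structurally parallel to the paper's proof: the implication (ii)$\Rightarrow$(i) via the triangle-inequality estimate and Lemma \ref{l:abstract-stone-w}, and the first half of (i)$\Rightarrow$(iii) --- building the auxiliary metric $e$ from a countable separating family, promoting each $G_j(t)=N(|(f_j)_x-(f_j)_x(\cdot-t)|)$ to a genuine Bohr almost periodic function via the estimate $|G_j(t+s)-G_j(s)|\le G_j(t)$, and summing to get $H$ --- are all correct and correspond to what the paper does (there split as (i)$\Rightarrow$(ii) plus (ii)$\Rightarrow$(iii)). The genuine gap lies in the metric-transfer step, exactly the step you dismiss as routine. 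An admissible seminorm $N$ is, by definition, a seminorm on $\Cu(G)$ only, and its monotonicity axiom ``$N(f)\le N(g)$ whenever $|f|\le g$'' presupposes that both functions lie in $\Cu(G)$. The indicator $1_{A(f,\delta)}$ of a superlevel set is in general not continuous, so the quantities $N(1_{A(f,\delta)})$ in your two inequalities are simply undefined, and Proposition \ref{p:simple} does not hold ``verbatim''; likewise the products $d_{t,x}\,1_{\{s\,:\,e_{t,x}(s)<\delta'\}}$ occurring in the proof of Lemma \ref{l:independence} leave the domain of $N$. (For $N=\overline{M}\circ|\cdot|$ the problem is invisible only because $\overline{M}$ happens to be defined on all bounded measurable functions.) This is precisely the point the paper singles out as delicate: its proof of (ii)$\Rightarrow$(iii) states that extra effort is needed ``as $N$ is not defined on measurable bounded functions but only on $\Cu(G)$'' and resolves it with Urysohn's lemma, replacing indicators by continuous functions sandwiched between $\{e_{t,x}\ge\delta'\}$ and $\{e_{t,x}\le\delta'/2\}$.

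The gap is localized and fixable --- in fact more simply than via Urysohn. With $d,e\le 1$ and $\delta'$ chosen so that $e(y,z)<\delta'$ implies $d(y,z)<\varepsilon/2$, one has the pointwise bound $d_{t,x}\le \frac{\varepsilon}{2}+\frac{1}{\delta'}\,e_{t,x}$: on $\{e_{t,x}<\delta'\}$ the first summand dominates $d_{t,x}$, while on $\{e_{t,x}\ge\delta'\}$ the second summand is at least $1\ge d_{t,x}$. Both sides of this inequality belong to $\Cu(G)$, so monotonicity, subadditivity, homogeneity and $N(1)=1$ give $d^{N,x}(t)=N(d_{t,x})\le \frac{\varepsilon}{2}+\frac{1}{\delta'}\,e^{N,x}(t)$, and taking $t$ in the relatively dense set $\{t : e^{N,x}(t)<\delta'\varepsilon/2\}$ (supplied by your first half) completes the transfer without ever evaluating $N$ on a discontinuous function; Lemma \ref{l:periods-vs-periodicity} then yields (iii). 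With this repair your argument is complete and equivalent in substance to the paper's.
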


\begin{remark} As $X$ is a compact metric space, a
metric on $X$ is continuous if and only if it generates the
topology.
\end{remark}

\begin{proof} (iii)$\Longrightarrow$(ii): This is clear.

\smallskip
(ii)$\Longrightarrow$(iii): This is the analogue of Lemma
\ref{l:independence} in our context. It can be shown by a variant of
the proof of that lemma. Some extra effort is needed as $N$ is not
defined on measurable bounded functions but only on $\Cu (G)$. This
is tackled by means of Urysohn lemma. As every locally compact group
is a normal space this lemma allows one to separate arbitrary closed
disjoint sets by continuous functions and this is what we will use.
Here are the details:

Let $e$ be any metric on $X$ such that $e^{N,x}$ is almost periodic
and let $d$ be any other metric on $X$, such that $d,e$ generate the
topology. Without loss of generality we can assume that $d, e \leq
1$.  Let $\varepsilon>0$ be arbitrary.  By Lemma
\ref{l:periods-vs-periodicity} it suffices to show that the set of
$t\in G$ with $d^{N,x}(t)\leq \varepsilon$ is relatively dense.

Choose $\delta' >0$ with $d(y,z) < \frac{\varepsilon}{2}$ whenever
$e(y,z) <\delta'$. Set
\[
\delta:=\frac{\varepsilon}{4 \delta' } \,.
\]
Let $t\in G$ be so that $e^{N,x}(t)< \delta$. By Lemma
\ref{l:periods-vs-periodicity} the set of such $t\in G$ is
relatively dense. Thus,  it remains to show $d^{N,x}(t)<\varepsilon$
for any such $t\in G$. Define $e_{t,x}$ and $d_{t,x}$ on $G$ by
$e_{t,x}(s):=e(sx,(t+s)x)$ and $d_{t,x}(s):=d(sx,(t+s)x)$.
 Set
\begin{displaymath}
A:=\{ s : e_{t,x} \geq \delta' \} \,;\qquad B:=\{ s : e_{t,x} \leq \frac{\delta'}{2} \}  \,.
\end{displaymath}
Then, by Urysohn's Lemma, there exists some continuous function $f: G
\to [0,1]$ such that $f(x)=1$ for all $x\in A$ and $f(x)=0$ for all
$x \in B$. Set $g:=1-f$. Then, $ e_{t,x} \geq  \frac{\delta'}{2} f$
and hence, $
 \frac{\delta'}{2} N(f) \leq N(e_{t,x}) \leq \delta$,
showing
$$N(f) \leq \frac{2\delta}{\delta'}=\frac{\varepsilon}{2}.$$
Moreover,  $d_{t,x}(s)g(s) \leq \frac{\varepsilon}{2}$. Indeed, if
$s \in A$ then  $g(s)=0$ by definition of $g$  and if $s \notin A$
then $ d_{t,x}(s) \leq \frac{\varepsilon}{2}$ by our choice of
$\delta'$. This shows
\[
N(d_{t,x}  ) \leq \frac{\varepsilon}{2} \,.
\]
Therefore, using $d \leq 1$ we obtain
$$
d^{N,x}(t) = N(d_{t,x}) \leq  N(d_{t,x}f)+N(d_{t,x}g)
   \leq N(f)+N(d_{t,x}g) \leq \frac{\varepsilon}{2} +
   \frac{\varepsilon}{2} = \varepsilon.
$$

\smallskip

(i)$\Longrightarrow$(ii):  This can  be shown as
(ii)$\Longrightarrow$(i) in Lemma \ref{l:char-ap-functions}.

\smallskip

(ii)$\Longrightarrow$(i): As in the  proof of
(i)$\Longrightarrow$(iv) of Lemma \ref{l:char-ap-functions} we infer
from (ii) that $d^{z}_x$ is $N$-almost periodic for any $z\in X$.
Now, (i) follows from Lemma \ref{l:abstract-stone-w}.
\end{proof}

\begin{coro} Let $(X,G,m)$ be a dynamical system with metric $d$.
Let $N$ be an admissible seminorm and $x\in X$ be $N$-almost
periodic. If
$$
\int_X f(y)\, \dd m(y) \leq N(f_x)
$$
holds for all $f\in C(X)$,  then $(X,G,m)$ has pure point spectrum.
\end{coro}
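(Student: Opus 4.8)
The plan is to reduce everything to the characterization of pure point spectrum that underlies Theorem \ref{t:main}, namely the result of \cite{Len} that $(X,G,m)$ has pure point spectrum if and only if the averaged function $\underline{d}$, defined by $\underline{d}(t)=\int_X d(y,ty)\,\dd m(y)$, is Bohr almost periodic. No ergodicity or generic point is needed for this equivalence. Thus it suffices to extract Bohr almost periodicity of $\underline{d}$ from the two hypotheses on $x$.

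The crux is to feed the right test functions into the domination hypothesis so that it compares $\underline{d}$ with the function $d^{N,x}$ governed by $N$-almost periodicity. For each fixed $t\in G$ I would take $f_t\in C(X)$ given by $f_t(y):=d(y,ty)$, which is continuous because the action and $d$ are. A one-line computation then gives $(f_t)_x(s)=f_t(sx)=d(sx,(t+s)x)$, so that $N((f_t)_x)=d^{N,x}(t)$ by the very definition of $d^{N,x}$, while $\int_X f_t\,\dd m=\underline{d}(t)$. Applying the hypothesis $\int_X f\,\dd m\le N(f_x)$ with $f=f_t$ therefore yields the pointwise bound $\underline{d}(t)\le d^{N,x}(t)$ for every $t\in G$.

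From here the argument is assembly. Since $x$ is $N$-almost periodic, Theorem \ref{t:abstract-nonsense-main} gives that $d^{N,x}$ is Bohr almost periodic, and hence by Lemma \ref{l:periods-vs-periodicity} the sets $\{t\in G:d^{N,x}(t)<\varepsilon\}$ are relatively dense for every $\varepsilon>0$. The inequality $\underline{d}\le d^{N,x}$ forces $\{t\in G:\underline{d}(t)<\varepsilon\}\supseteq\{t\in G:d^{N,x}(t)<\varepsilon\}$ to be relatively dense as well. Finally, exactly as recorded in the proof of Theorem \ref{t:main} (the triangle inequality and invariance of $m$ give $|\underline{d}(t+s)-\underline{d}(s)|\le\underline{d}(t)$, so that $\underline{d}$ is uniformly continuous with $\underline{d}(e)=0$), relative density of all sublevel sets $\{\underline{d}<\varepsilon\}$ is equivalent to Bohr almost periodicity of $\underline{d}$. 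Invoking \cite{Len} then concludes that $(X,G,m)$ has pure point spectrum.

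The one genuinely substantive point is the identification $N((f_t)_x)=d^{N,x}(t)$: it is precisely what channels the abstract domination hypothesis into the concrete comparison $\underline{d}\le d^{N,x}$, after which the established machinery does all the work. Beyond this there is no hard analytic obstacle; one should only verify the routine regularity facts that legitimize each step, namely that $f_t$ is continuous, that $(f_t)_x$ lies in $C_u(G)$ so that $N$ may be applied to it (which rests on the standard uniform continuity of the action on the compact space $X$), and that $\underline{d}$ is uniformly continuous so that the sublevel-set criterion for Bohr almost periodicity is available.
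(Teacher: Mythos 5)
Your proof is correct and follows essentially the same route as the paper's: the same test function $f_t(y)=d(y,ty)$ converts the domination hypothesis into the pointwise bound $\underline{d}(t)\le d^{N,x}(t)$, Theorem \ref{t:abstract-nonsense-main} combined with Lemma \ref{l:periods-vs-periodicity} yields relative density of the sublevel sets of $d^{N,x}$, and the sublevel-set criterion for Bohr almost periodicity of $\underline{d}$ together with the main result of \cite{Len} concludes pure point spectrum. The regularity checks you flag (continuity of $f_t$, uniform continuity of $\underline{d}$) are exactly the ones the paper relies on implicitly, so there is nothing to add.
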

\begin{remark} Note that the assumption holds whenever there exists
a F \o lner sequence $(B_n)$ along which Birkhoff's ergodic
theorem holds and $N$ satisfies $\overline{M} \circ |\cdot| \leq N$
and $x$ is generic with respect to $m$.
\end{remark}
\begin{proof}
As $x$ is  $N$-almost periodic the preceding
theorem (combined with Lemma \ref{l:periods-vs-periodicity}) gives
that for any $\varepsilon >0$ the set of $t\in G$ with $
d^{N,x}(t)<\varepsilon$ is relatively dense. By using the assumption
on $N$  with $f(y) = d(y,ty)$ (for $t\in G$ fixed), we furthermore
find  for the function
\[
\underline{d}: G\longrightarrow
[0,\infty),\qquad \underline{d}(t) = \int_X d(y,ty)\, \dd m(y),
\]
the inequality
$$\underline{d}(t) \leq d^{N,x} (t)$$
for all $t\in G$. Hence, for any $\varepsilon >0$ the set of $t\in
G$ with $\underline{d}(t) <\varepsilon$ is relatively dense as well.
This gives that $\underline{d}$ is Bohr almost periodic and the
desired statement on pure point spectrum follows from the main
result of  \cite{Len} (compare proof of Theorem \ref{t:main} as
well).
\end{proof}

\begin{appendix}

\section{Bohr almost periodic functions}\label{s:a:bap}
In this section, we briefly present some background on Bohr almost
periodic function. This is completely standard and can be found in
many places. We include a discussion here in order to be
self-contained and to set the perspective on the weaker (and less
known) notions of almost periodicity underlying our considerations.
For more details we refer the reader to \cite{Cord,MoSt}.

\medskip

Let $G$ be a locally compact abelian group. Recall from
Section~\ref{s:mean} that a continuous function $f :
G\longrightarrow \CC$ is called \textit{Bohr almost periodic} if for
any $\varepsilon >0$ the set of $t\in G$ with
\begin{equation}
\|f - f(\cdot -t) \|_\infty <\varepsilon \tag{$\spadesuit$}
\end{equation}
is relatively dense. Any such $t\in G$ is then called an
\textit{$\varepsilon$-almost period} of $f$. It turns out that a
continuous $f : G\longrightarrow \CC$ is Bohr almost
periodic if and only if $\overline{\{f(\cdot - t) : t\in G\}}$ is
compact, where the closure is taken with respect to the supremum
norm. It is not hard to see that any Bohr almost periodic function
is bounded and uniformly continuous. Moreover, the Bohr almost periodic
functions form a closed subalgebra of the algebra of all uniformly
continuous bounded functions on $G$. The main structural result on
Bohr almost periodic functions is that a function $f$ is Bohr almost
periodic if and only if for any $\varepsilon >0$ there exist
$k\in\NN$,  $\xi_1,\ldots, \xi_k\in \widehat{G}$ and $c_1, \ldots,
c_k\in \CC$ with

\begin{equation*}
\| f - \sum_{j=1}^k c_j \xi_j\|_\infty  < \varepsilon.   \tag{$\heartsuit$}
\end{equation*}
The basic idea of the weaker concepts of almost periodicity
discussed in the subsequent sections is to replace the supremum norm
$\|\cdot\|_\infty$ in $(\spadesuit)$ and $(\heartsuit)$ by suitable
(semi)norms arising by  averaging procedures.

\section{Mean almost periodic functions}\label{s:a:map}
The main result of this appendix shows that the bounded uniformly
continuous  mean almost periodic functions form a closed subalgebra
of the algebra of bounded uniformly continuous functions on $G$.
This is certainly well-known and a proof can be given by standard
means. For the convenience of the reader and in order to keep this
article self contained we include a discussion below. As our article
deals with abelian groups we assume that the group $G$ below is
abelian. Note, however, that this is not used in the proofs.

\medskip

We start with a general result on discrete geometry of groups.

\begin{prop}\label{prop B1}
Let $G$ be a locally compact abelian group. Let $D$ and $E$ be
relatively dense subsets of $G$ and $V$ a relatively compact open
neighborhood of the neutral element. Then, $ ((D-D) + V) \cap ((E -
E) + V) $ is relatively dense.
\end{prop}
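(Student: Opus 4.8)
The plan is to exhibit a relatively dense set contained in $S:=((D-D)+V)\cap((E-E)+V)$; since any superset of a relatively dense set is again relatively dense (the same compact window works), this suffices. The relatively dense subset will be a difference set of a single ``colour class'' of points of $D$, where the colouring is arranged so that differences of equally coloured points automatically land in $S$.

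First I would reduce to a uniformly discrete set. Fix compact $K_D,K_E$ with $G=D+K_D=E+K_E$, and fix a small symmetric relatively compact open $U\ni e$. A standard maximality (Zorn) argument yields a maximal $U$-separated subset $D_0\subseteq D$; it is still relatively dense, since any $d\in D$ that cannot be adjoined satisfies $d\in D_0+U$, so $D\subseteq D_0+U$ and $G=D+K_D\subseteq D_0+(U+K_D)$ with $U+K_D$ compact. By construction $D_0$ is uniformly discrete and $D_0-D_0\subseteq D-D$.

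Next comes the colouring. For each $a\in D_0$ pick $e(a)\in E$ with $a-e(a)\in K_E$, cover the compact set $K_E$ by finitely many translates $z_1+U_{\mathrm c},\dots,z_N+U_{\mathrm c}$ of a small symmetric open $U_{\mathrm c}$, and colour $a$ by the index $i$ with $a-e(a)\in z_i+U_{\mathrm c}$. This gives a finite partition $D_0=\bigsqcup_{i=1}^N D_{0,i}$. If $a,a'$ share a colour then $(a-a')-(e(a)-e(a'))\in U_{\mathrm c}-U_{\mathrm c}$, so $a-a'\in(E-E)+(U_{\mathrm c}-U_{\mathrm c})$, while trivially $a-a'\in D-D$. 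Choosing $U_{\mathrm c}$ together with the thickening $U_1$ (below) small enough that $(U_{\mathrm c}-U_{\mathrm c})+(U_1-U_1)\subseteq V$, the thickened difference set $(A-A)+(U_1-U_1)$ of any class $A:=D_{0,i}$ lands inside both $(D-D)+V$ and $(E-E)+V$, hence inside $S$.

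It remains to find a colour class with relatively dense difference set. As $D_0$ is relatively dense and uniformly discrete, $\#(D_0\cap B_n)$ grows like $|B_n|$, so by subadditivity of $\limsup$ one class $A=D_{0,i^*}$ carries a positive proportion of the points along a subsequence; thickening to the open set $\tilde A:=A+U_1$ (a disjoint union of balls, by uniform discreteness) gives $\delta:=\overline M(1_{\tilde A})>0$ now with respect to Haar measure. The crux, and the step I expect to be the real obstacle, is the classical fact that a set of positive density has a relatively dense difference set. I would prove it directly: for $m>1/\delta$ and arbitrary $s_1,\dots,s_m\in G$, the translates $\tilde A-s_i$ cannot be pairwise disjoint, since the F\o lner property gives $|(\tilde A-s_i)\cap B_n|=|\tilde A\cap B_n|+o(|B_n|)$ uniformly in $i$, so along a subsequence realising $\delta$ one would obtain $m\delta\le 1$; hence some $s_i-s_j\in\tilde A-\tilde A$. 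Therefore every subset of $G$ whose distinct pairwise differences avoid the symmetric set $\tilde A-\tilde A$ has at most $m-1$ elements, and a maximal such set $M$ satisfies $G=(\tilde A-\tilde A)+M$ by maximality and symmetry. Thus $\tilde A-\tilde A$ is relatively dense, and being contained in $S$ it completes the proof. The only fiddly points are the neighbourhood budgeting in the colouring and the counting-versus-Haar bookkeeping, both routine once the density lemma is in place.
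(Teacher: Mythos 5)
Your route is genuinely different from the paper's, but as written it has two concrete problems, one of which is a real gap relative to the statement. For comparison: the paper's proof is a pure pigeonhole argument needing no measure theory at all. It picks one relatively compact open $U$ each of whose translates meets both $D$ and $E$, covers $U$ by finitely many sets $z_n+W$ with $W=-W$ and $W+W\subseteq V$, colours each $t\in D$ by an index $n_t$ with $t-s_t\in z_{n_t}+W$ (where $s_t\in E$), and then fixes a \emph{single} representative pair $(t_n,s_n)$ per colour. Every difference $t-t_{n_t}$ lies in $(D-D)\cap\bigl((E-E)+V\bigr)$, and the set $\{t-t_{n_t}: t\in D\}$ is relatively dense simply because only finitely many fixed elements are subtracted from the relatively dense set $D$. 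Your proof replaces this ``one representative per colour'' device by ``all differences within one colour class of positive density'' plus the classical fact that a set of positive upper Banach density has relatively dense difference set. That lemma, and your proof of it (no family of more than $1/\delta$ translates can be pairwise disjoint; a maximal set whose pairwise differences avoid $A+U_1-(A+U_1)$ is therefore finite, and by maximality and symmetry its translates cover $G$), are correct, as is the neighbourhood budgeting $(U_{\mathrm c}-U_{\mathrm c})+(U_1-U_1)\subseteq V$.

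The first problem is generality: the proposition is stated for an \emph{arbitrary} locally compact abelian group, and the paper's argument needs nothing more, whereas your argument requires a F\o lner sequence, which exists if and only if $G$ is $\sigma$-compact (as the paper notes in Section \ref{s:mean}). So you prove strictly less than what is asserted; every LCA group is amenable, so one could try to rerun your scheme with F\o lner nets or invariant means, but that rewrite is precisely the part you have not done. The second problem is the step you dismiss as routine bookkeeping: from ``$D_0$ relatively dense and uniformly discrete'' one cannot conclude by naive counting that $\#(D_0\cap B_n)$ grows like $|B_n|$, nor directly that $\overline{M}(1_{A+U_1})>0$, because a F\o lner sequence need not be Van Hove. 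Writing $G=D_0+K$ only locates, for each point of $B_n$, a point of $D_0$ in $B_n-K$, and $|(B_n-K)\setminus B_n|$ need \emph{not} be $o(|B_n|)$ for a F\o lner sequence. The claim is nevertheless true, but the repair needs a covering trick: cover $K$ by finitely many translates $k_1+U_1,\dots,k_J+U_1$, so that $1_G\le\sum_{j=1}^J 1_{(D_0+U_1)+k_j}$ pointwise; averaging over $B_n$ and applying the F\o lner property to the finitely many fixed translates $k_j$ yields $\liminf_n |(D_0+U_1)\cap B_n|/|B_n|\ge 1/J>0$, after which your pigeonhole over the colour classes goes through. In summary: with that repair your argument is correct for $\sigma$-compact $G$ (the only case the paper actually uses later in the appendix), but it does not establish the proposition in the stated generality, while the paper's argument is both shorter and fully general.
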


\begin{proof} As both $D$ and $E$ are relatively dense, we can choose
an open relatively compact set $U\subset G$ with the property that
any translate of $U$ intersects both $D$ and $E$. As addition is
continuous on $G$, we can choose furthermore a relatively compact
open neighborhood $W$ with $W = -W$ and $W + W \subset V$. As $U$ is
relatively compact, there exist $N\in \NN$ and $z_1,\ldots z_N\in G$
with
$$U = \bigcup_{n=1}^N (z_n + W) \cap U.$$

Consider now an arbitrary $t\in D$. Then, there exists an $s\in E$
with $t - s \in U$. Hence, for any $t\in D$ we can find $s_t \in E$
and $n_t\in \{1,\ldots, N\}$ with
$$(t - s_t) \in z_{n_t} + W.$$
Fix now for each $n\in \{1,\ldots, N\}$  elements $t_n\in D$ and
$s_n\in E$ with
$$ (t_n - s_n) \in z_n + W.$$
(If some of the $n$ does not admit elements, we just remove this $n$
from our list.) Then, for any $t\in D$ we can find $s_t\in E$ and
$n\in \{1,\ldots, N\}$ such that both $ t- s_t$ and $ t_n - s_n$
belong to $z_n + W$. Hence, we find
$$t - s_t + w = t_n - s_n + w'$$
for suitable $w,w'\in W$. This gives
$$ t- t_n = s_t - s_n + v$$
with $v = w' - w \in W - W \subset V$. Now, we clearly have $t- t_n
\in (D-D) + V$ and $(s_t - s_n) + v\in (E-E) + V.$ Moreover, the set
of all $t - t_n$ is relatively dense as $t$ is an  arbitrary element
of the relatively dense $D$ and there are only finitely many $t_n$.
This finishes the proof.
\end{proof}

Let $G$ be a { $\sigma$-compact} locally compact abelian group and
$(B_n)$ a F\o lner sequence on $G$. Let $f$ be a uniformly
continuous bounded function on $G$. Let $\varepsilon >0$ be given.
As usual we say that a $t\in G$ is an $\varepsilon$-almost period of
$f$ if
$$\overline{M} ( |f- f(\cdot-t)|) <\varepsilon.$$
Denote the set of all $\varepsilon$-almost periods of $f$ by $
\mbox{AP}(f,\varepsilon)$. Then, it is not hard to see that
$$\mbox{AP}(f,\varepsilon) -\mbox{AP}(f,\varepsilon) \subset
\mbox{AP}(f,2 \varepsilon).$$  A uniformly continuous bounded $f :
G\longrightarrow \CC$ is mean almost periodic if for any
$\varepsilon >0$ the set $\mbox{AP}(f,\varepsilon)$ is relatively
dense.

\begin{lemma}\label{l:intersection-periods}
Let $G$ be a { $\sigma$-compact} locally compact abelian group and
$(B_n)$ a F\o lner sequence on $G$. Let a natural number $n$ and
uniformly continuous bounded mean almost periodic functions $f_1,
\ldots, f_n$ on $G$ be given. Then, the set
$$\bigcap_{k=1}^n \operatorname{AP}(f_k,\varepsilon)$$
is relatively dense in $G$ for any $\varepsilon >0$.
\end{lemma}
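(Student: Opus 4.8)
The plan is to proceed by induction on $n$, the base case $n=1$ being exactly the definition of mean almost periodicity of $f_1$. The heart of the matter is that an intersection of two relatively dense sets need not be relatively dense, so to pass from $n-1$ to $n$ I will not intersect almost-period sets directly but instead invoke Proposition \ref{prop B1}, which supplies relative density of a \emph{difference-plus-neighborhood} set. The price is that I only land in slightly enlarged almost-period sets, and I will absorb the enlargement using two ingredients already at hand: the subtraction property $\operatorname{AP}(f,\delta)-\operatorname{AP}(f,\delta)\subset\operatorname{AP}(f,2\delta)$ noted above, and the uniform continuity of the $f_k$ together with the translation invariance of $\overline{M}$.

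Concretely, fix $\varepsilon>0$ and set $\delta:=\varepsilon/3$. Since each $f_k$ is uniformly continuous and bounded and since $\overline{M}(|h|)\le\|h\|_\infty$, I can first pick for each $k$ a neighborhood of the neutral element on which $\|f_k-f_k(\cdot-v)\|_\infty<\delta$; intersecting these finitely many neighborhoods and shrinking yields a relatively compact open neighborhood $V$ of the neutral element with
$$
\overline{M}(|f_k - f_k(\cdot - v)|) < \delta \qquad \text{for all } v\in V \text{ and all } k=1,\ldots,n.
$$
Writing $D:=\bigcap_{k=1}^{n-1}\operatorname{AP}(f_k,\delta)$, which is relatively dense by the induction hypothesis, and $E:=\operatorname{AP}(f_n,\delta)$, which is relatively dense by assumption, Proposition \ref{prop B1} gives that
$$
S := \bigl((D-D)+V\bigr)\cap\bigl((E-E)+V\bigr)
$$
is relatively dense. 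I then claim $S\subseteq\bigcap_{k=1}^n\operatorname{AP}(f_k,\varepsilon)$, which closes the induction.

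To verify the claim, take $s\in S$ and fix $k\le n-1$. Writing $s=w+v$ with $w\in D-D$ and $v\in V$, the subtraction property and the definition of $D$ give $w\in\operatorname{AP}(f_k,2\delta)$. Using the triangle inequality for the seminorm $\overline{M}(|\cdot|)$ and the translation invariance of $\overline{M}$ (so that $\overline{M}(|f_k(\cdot-w)-f_k(\cdot-w-v)|)=\overline{M}(|f_k-f_k(\cdot-v)|)$), I obtain
$$
\overline{M}(|f_k - f_k(\cdot - s)|) \le \overline{M}(|f_k - f_k(\cdot - w)|) + \overline{M}(|f_k - f_k(\cdot - v)|) < 2\delta + \delta = \varepsilon.
$$
The representation $s\in(E-E)+V$ yields the same bound for $k=n$. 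Hence $s\in\operatorname{AP}(f_k,\varepsilon)$ for every $k$, as required.

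The step I expect to be the main obstacle is precisely the control of the $V$-perturbation: Proposition \ref{prop B1} only places $s$ in $(D-D)+V$ rather than in $D-D$ itself, so one must be sure that shifting a genuine almost period by a small $v\in V$ costs at most a controlled amount in $\overline{M}$, and moreover does so \emph{uniformly across all $n$ functions simultaneously}. This is exactly what the single uniform choice of $V$ above secures, and it is the only point at which uniform continuity of the $f_k$ enters.
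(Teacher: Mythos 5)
Your proof is correct and follows essentially the same route as the paper's: induction on $n$ with $\delta=\varepsilon/3$, one neighborhood $V$ chosen by uniform continuity simultaneously for all the $f_k$, Proposition~\ref{prop B1} applied to $D=\bigcap_{k<n}\operatorname{AP}(f_k,\delta)$ and $E=\operatorname{AP}(f_n,\delta)$, and the inclusions $(D-D)+V\subset\operatorname{AP}(f_k,\varepsilon)$ and $(E-E)+V\subset\operatorname{AP}(f_n,\varepsilon)$. The only difference is that you spell out the verification of these inclusions (via the subtraction property and the translation invariance of $\overline{M}$), which the paper dismisses with ``it is not hard to see.''
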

\begin{proof} This will be shown by induction in $n$. The case $n =1$ is
clear. So, assume now the statement holds for a chosen $n$. Let
$\varepsilon >0$ and uniformly continuous functions $f_1,\ldots,
f_{n+1}$ be given. Then, the set $D:=\bigcap_{k=1}^n \mbox{AP}(f_k,
\varepsilon / 3)$ is relatively dense by assumption. As the
functions $f_k$, $k=1,\ldots, n+1$,  are uniformly continuous we can
find an open relatively compact neighborhood $V$ of the neutral
element such that
$$\|f_k - f_k(\cdot-s)\|_\infty <
\frac{\varepsilon}{3}$$ for all $s\in V$ and $k=1,\ldots, n+1$.  Set
$E:=\mbox{AP}(f_{n+1},\varepsilon/3)$. Then, the previous
proposition gives that
$$((D- D) + V) \cap ((E-E) + V)$$
is relatively dense. On the other hand it is not hard to see that
$$(D-D)  + V\subset \mbox{AP}(f_k,\varepsilon), k=1,\ldots, n  \mbox{ and } (E-E) +
V\subset \mbox{AP}(f_{n+1}, \varepsilon).$$ This finishes the proof.
\end{proof}

\begin{theorem}\label{t:map-algebra} Let $G$ be a { $\sigma$-compact} locally compact abelian group and $(B_n)$
a F\o lner sequence on $G$. Then, the set of all uniformly
continuous bounded mean almost periodic functions is invariant under
taking complex conjugates and a  closed subalgebra of the uniformly
continuous bounded functions on $G$ equipped with
$\|\cdot\|_\infty$.
\end{theorem}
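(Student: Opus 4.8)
The plan is to verify directly that the class of uniformly continuous bounded mean almost periodic functions is closed under complex conjugation, under the algebra operations, and under uniform limits, in each case by exhibiting an inclusion between sets of $\varepsilon$-almost periods and then invoking the relative denseness of finite intersections of such sets provided by Lemma \ref{l:intersection-periods}. Throughout I would use three elementary properties of $\overline{M}$, all immediate from its definition as a limit superior of averages: it is monotone, it is subadditive, and it satisfies $\overline{M}(c)=c$ for a nonnegative constant $c$ (so in particular $\overline{M}(|f|)\le\|f\|_\infty$). Boundedness of the functions involved is also used in an essential way. The two simplest closure properties are essentially formal: from $|\overline{f}-\overline{f}(\cdot-t)|=|f-f(\cdot-t)|$ we get $\operatorname{AP}(\overline{f},\varepsilon)=\operatorname{AP}(f,\varepsilon)$, and from $\overline{M}(|cf-cf(\cdot-t)|)=|c|\,\overline{M}(|f-f(\cdot-t)|)$ we get $\operatorname{AP}(cf,\varepsilon)=\operatorname{AP}(f,\varepsilon/|c|)$ for $c\neq0$ (the case $c=0$ being trivial).

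For sums and products I would start from the pointwise estimates
\[
|(f+g)-(f+g)(\cdot-t)|\le |f-f(\cdot-t)|+|g-g(\cdot-t)|
\]
and, after writing $fg-(fg)(\cdot-t)=(f-f(\cdot-t))\,g+f(\cdot-t)\,(g-g(\cdot-t))$,
\[
|fg-(fg)(\cdot-t)|\le \|g\|_\infty\,|f-f(\cdot-t)|+\|f\|_\infty\,|g-g(\cdot-t)|.
\]
Applying $\overline{M}$ and using subadditivity and monotonicity, these turn into the inclusions $\operatorname{AP}(f,\varepsilon/2)\cap\operatorname{AP}(g,\varepsilon/2)\subset\operatorname{AP}(f+g,\varepsilon)$ and $\operatorname{AP}\!\bigl(f,\tfrac{\varepsilon}{2\|g\|_\infty}\bigr)\cap\operatorname{AP}\!\bigl(g,\tfrac{\varepsilon}{2\|f\|_\infty}\bigr)\subset\operatorname{AP}(fg,\varepsilon)$, where in the product case one treats $\|f\|_\infty=0$ or $\|g\|_\infty=0$ separately since then $fg\equiv0$. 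By Lemma \ref{l:intersection-periods} each of these intersections of $\varepsilon$-almost period sets is relatively dense, so $f+g$ and $fg$ are again mean almost periodic.

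For closedness under uniform limits, let $f_k\to f$ uniformly with each $f_k$ uniformly continuous, bounded and mean almost periodic. Then $f$ is again uniformly continuous and bounded, and given $\varepsilon>0$ I would pick $k$ with $\|f-f_k\|_\infty<\varepsilon/3$. The triangle inequality yields $|f-f(\cdot-t)|\le 2\|f-f_k\|_\infty+|f_k-f_k(\cdot-t)|$, whence, evaluating $\overline{M}$ on the constant term, $\overline{M}(|f-f(\cdot-t)|)\le \tfrac{2\varepsilon}{3}+\overline{M}(|f_k-f_k(\cdot-t)|)$. This gives $\operatorname{AP}(f_k,\varepsilon/3)\subset\operatorname{AP}(f,\varepsilon)$, and since the left-hand set is relatively dense so is $\operatorname{AP}(f,\varepsilon)$; thus $f$ is mean almost periodic.

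The only genuinely nontrivial input is the relative denseness of finite intersections of almost-period sets, which is precisely Lemma \ref{l:intersection-periods} and ultimately rests on the discrete-geometry Proposition \ref{prop B1}. Once that is in hand, the single place demanding any real care is the multiplicative closure, where boundedness of $f$ and $g$ is exactly what is needed to control the two cross terms in the factorization of $fg-(fg)(\cdot-t)$; the resulting estimate is then routine.
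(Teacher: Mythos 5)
Your proof is correct and follows essentially the same route as the paper: both rest on Lemma \ref{l:intersection-periods} for relative denseness of intersections of almost-period sets, use subadditivity of $\overline{M}$ for sums, and control products via the same estimate $|fg-(fg)(\cdot-t)|\le \|g\|_\infty|f-f(\cdot-t)|+\|f\|_\infty|g-g(\cdot-t)|$. You merely spell out the conjugation, scalar, and uniform-limit steps that the paper dismisses as ``not hard to see.''
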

\begin{proof} We have to show that the set in question is closed
under complex conjugation,  sums, products, multiplication by
scalars and uniform convergence.

\smallskip

It is not hard to see that the set in question is closed under
complex conjugation,  uniform convergence and multiplication by
scalars.

\smallskip

We next show that it is closed under sums: Let $f,g$ be mean almost
periodic uniformly continuous bounded functions. Then, the previous
lemma easily gives that $f + g$ is also mean almost periodic.

\smallskip

Finally, we deal with products: Let $f,g$ be mean almost periodic
uniformly continuous bounded functions. Then, a short computation
gives for any $t\in G$
\begin{align*}
|f(s) g(s) - f(s-t) g(s-t)|
    &\leq |f(s) g(s) - f(s) g(s-t)| +|f(s)g(s-t) - f(s-t) g(s-t)|\\
    &\leq \|f\|_\infty |g(s) - g(s-t)|+ \|g\|_\infty |f(s) - f(s-t)|
\end{align*}
From this we easily obtain
$$\overline{M}(|fg - (fg)(\cdot-t)|) \leq \|f\|_\infty
\overline{M}(|g - g(\cdot-t)|) + \|g\|_\infty \overline{M}(|f  -
f(\cdot-t)|).$$ Now, the desired statement follows easily from the
preceding lemma.

\smallskip

The last statement is clear.
\end{proof}

For later use we also note the following proposition.

\begin{prop}\label{p:sum} Let $(f_n)$ be a sequence of uniformly
continuous bounded mean almost periodic functions on $G$ with
$\|f_n\|\leq 1$ for all $n$. Let $c_n>0$ with $\sum_{n=1}^{\infty} c_n <\infty$
be given. Then, there exists for any $\varepsilon >0$ a relatively
dense set $D$ in $G$ with
$$\overline{M}(\sum_{n=1}^{\infty} c_n |f - f(\cdot -t)|) <\varepsilon$$
for all $t\in D$.
\end{prop}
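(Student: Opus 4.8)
The plan is to read the displayed sum as $\sum_{n=1}^\infty c_n \, |f_n - f_n(\cdot - t)|$ and to split it into a finite head and a tail, treating each part by elementary properties of $\overline{M}$ together with Lemma \ref{l:intersection-periods}. First I would record the properties of $\overline{M}$ that are needed: it is monotone and subadditive (both immediate from its definition as a $\limsup$ of averages), and $\overline{M}(c\cdot 1) = c$ for any constant $c\geq 0$. Since $\|f_n\|\leq 1$, each difference satisfies $|f_n - f_n(\cdot - t)| \leq 2$ pointwise, so the full sum is dominated by the constant $2\sum_{n} c_n <\infty$ and $\overline{M}$ is well defined on it.

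Given $\varepsilon >0$, I would first use convergence of $\sum c_n$ to choose $N\in \NN$ so large that $2\sum_{n>N} c_n < \varepsilon/2$. Setting $C:=\sum_{n=1}^\infty c_n$, I would then pick $\delta >0$ with $\delta C < \varepsilon/2$. Since $f_1,\ldots, f_N$ are mean almost periodic, Lemma \ref{l:intersection-periods} guarantees that
$$D := \bigcap_{k=1}^N \operatorname{AP}(f_k,\delta)$$
is relatively dense in $G$. This $D$ will be the set claimed in the proposition.

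Finally, for any $t\in D$ I would estimate, using subadditivity of $\overline{M}$ to separate head and tail,
$$\overline{M}\Bigl(\sum_{n=1}^\infty c_n |f_n - f_n(\cdot - t)|\Bigr) \leq \sum_{n=1}^N c_n\, \overline{M}\bigl(|f_n - f_n(\cdot - t)|\bigr) + \overline{M}\Bigl(\sum_{n>N} c_n |f_n - f_n(\cdot - t)|\Bigr).$$
For the head, membership $t\in D$ gives $\overline{M}(|f_k - f_k(\cdot - t)|) < \delta$ for each $k\leq N$, so the first term is at most $\delta\sum_{n=1}^N c_n \leq \delta C < \varepsilon/2$. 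For the tail, the pointwise bound $\sum_{n>N} c_n |f_n - f_n(\cdot - t)| \leq 2\sum_{n>N} c_n$ combined with monotonicity and the constant-function property of $\overline{M}$ yields a value at most $2\sum_{n>N} c_n < \varepsilon/2$. Adding the two contributions gives the desired bound.

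The only point to get right—and the reason for the head/tail split rather than a direct infinite subadditivity estimate—is that $\overline{M}$ is being applied to a genuinely infinite sum. I avoid any delicate interchange of $\overline{M}$ with an infinite series by controlling the finite head through finite subadditivity and Lemma \ref{l:intersection-periods}, and the tail through the uniform bound $2$ on each summand together with convergence of $\sum c_n$; everything is dominated by the convergent series $2\sum_n c_n$, so no further convergence issue arises.
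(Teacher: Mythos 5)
Your proof is correct and takes essentially the same route as the paper's: a head/tail split, with Lemma \ref{l:intersection-periods} supplying a relatively dense set of common almost periods for the finite head, and the uniform bound $2\sum_{n>N} c_n$ (via $\|f_n\|_\infty \leq 1$) controlling the tail. In fact, your choice of tolerance $\delta$ with $\delta \sum_n c_n < \varepsilon/2$ is slightly more careful than the paper's choice of $\varepsilon/(2n_0)$, which as written only gives the head bound $\tfrac{\varepsilon}{2n_0}\sum_{k\leq n_0} c_k$ and thus implicitly requires $\sum_{k\leq n_0} c_k \leq n_0$.
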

\begin{proof} Choose $n_0$ large enough so that $\sum_{k=n_0 +1}^\infty
c_k < \varepsilon /4$. Then, $\sum_{n=n_0 +1}^\infty c_n \|f -
f(\cdot -t)\|_\infty < \varepsilon /2 $ by assumption on the $f_n$
and the $c_n$. By Lemma \ref{l:intersection-periods} there exists a
relatively dense set $D$ in $G$ with $\overline{M}( |f_k - f_k
(\cdot -t)|) <\frac{\varepsilon}{2 n_0}$ for $k = 1,\ldots, n_0$.
Now, the statement follows easily.
\end{proof}

\begin{remark} The considerations of this section carry over when
$\overline{M}\circ |\cdot| $ is replaced by any invariant seminorm
$N$ on the algebra of bounded uniformly continuous functions on $G$
satisfying
\begin{itemize}
\item $N(f) \leq N(g)$ whenever $|f|\leq g$,
\item $N(1) =1$.
\end{itemize}
This point is taken up in Section \ref{s:abstract}.
\end{remark}

\section{Besicovitch almost periodic
functions and existence of means}\label{s:a:beap} We consider a {
$\sigma$-compact} locally compact abelian group $G$ together with a
F\o lner sequence $(B_n)$. Our aim is to study the set of uniformly
continuous bounded functions $f : G\longrightarrow \CC$, which are
Besicovitch almost periodic i.e. satisfy that for any $\varepsilon
>0$ there exist $k\in \NN$, $\xi_1,\ldots, \xi_k\in
\widehat{G}$ and $c_1,\ldots ,c_k\in\CC$ with
$$\overline{M} (|f - \sum_{j=1}^k c_{\xi_j} \xi_j|)< \varepsilon.$$
For further details and a more in depth study  of these functions we
refer to \cite{LeSpStr}.

\medskip

\begin{prop} Let $G$ be a $\sigma$-compact locally compact abelian group and $(B_n)$
a F\o lner sequence on $G$. Then, any uniformly continuous bounded
Besicovitch almost periodic function is mean almost periodic.
\end{prop}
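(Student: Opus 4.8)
The plan is to exploit that a Besicovitch almost periodic function can be approximated, in the seminorm $\overline{M}\circ|\cdot|$, by trigonometric polynomials, each of which is Bohr almost periodic and hence has a relatively dense set of (sup-norm) almost periods; mean almost periodicity of $f$ is then transported across this approximation by a routine $\varepsilon/3$ argument. First I would record two elementary facts about the seminorm $\overline{M}\circ|\cdot|$. It is dominated by $\|\cdot\|_\infty$, since $\frac{1}{|B_n|}\int_{B_n}|h|\le\|h\|_\infty$ for every $n$. And it is translation invariant, i.e. $\overline{M}(|h(\cdot-t)|)=\overline{M}(|h|)$ for all $t\in G$ and all bounded measurable $h$. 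This invariance is the only point needing a little care; it follows from the F\o lner property, because replacing $B_n$ by $B_n-t$ changes the average of the bounded function $|h|$ by at most $\|h\|_\infty\,|B_n\triangle(B_n-t)|/|B_n|$, which tends to $0$.

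Next I would observe that every character $\xi\in\widehat{G}$, and hence every trigonometric polynomial $P=\sum_{j=1}^k c_j\xi_j$, is Bohr almost periodic; this is immediate from the characterization $(\heartsuit)$ in Appendix~\ref{s:a:bap} (take the polynomial itself as its own approximant). Consequently, for each $\delta>0$ the set of $t\in G$ with $\|P-P(\cdot-t)\|_\infty<\delta$ is relatively dense.

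Now fix $\varepsilon>0$. By Besicovitch almost periodicity of $f$, choose a trigonometric polynomial $P$ with $\overline{M}(|f-P|)<\varepsilon/3$, and let $t$ be any $\varepsilon/3$-almost period of $P$ in the supremum norm. Splitting by the triangle inequality (for the seminorm $\overline{M}\circ|\cdot|$) gives
\[
\overline{M}(|f-f(\cdot-t)|)\le \overline{M}(|f-P|)+\overline{M}(|P-P(\cdot-t)|)+\overline{M}(|P(\cdot-t)-f(\cdot-t)|).
\]
The first summand is $<\varepsilon/3$ by the choice of $P$; the second is at most $\|P-P(\cdot-t)\|_\infty<\varepsilon/3$ by domination; and the third equals $\overline{M}(|f-P|)<\varepsilon/3$ by the translation invariance noted above. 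Hence $\overline{M}(|f-f(\cdot-t)|)<\varepsilon$ for every $t$ in the relatively dense set of $\varepsilon/3$-almost periods of $P$, so this set is contained in the set of $\varepsilon$-almost periods of $f$, which is therefore relatively dense.

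As $\varepsilon>0$ was arbitrary, $f$ is mean almost periodic. I expect no real obstacle here: the argument is a standard approximation, and the single subtlety is the translation invariance of $\overline{M}$, which I would isolate as a preliminary remark (uniform continuity of $f$ is used only to keep all objects honest bounded functions and plays no essential role in the estimate).
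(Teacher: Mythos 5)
Your proof is correct and is essentially the paper's own argument: approximate $f$ in the seminorm $\overline{M}\circ|\cdot|$ by a trigonometric polynomial $P$, use Bohr almost periodicity of $P$ to get a relatively dense set of sup-norm almost periods, and transfer them to $f$ via the three-term triangle inequality together with translation invariance of $\overline{M}$ and its domination by $\|\cdot\|_\infty$. The only difference is cosmetic (you work with $\varepsilon/3$ where the paper concludes with $3\varepsilon$, and you spell out the F\o lner justification of the invariance, which the paper takes for granted).
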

\begin{proof} Let $\varepsilon >0$ be given. Choose $c_1,\ldots,
c_k\in \CC $ and $\xi_1,\ldots, \xi_k\in\widehat{G}$ such that
$P:=\sum_{j=1}^k c_j \xi_j$ satisfies
$$\overline{M}(| f - P|) <\varepsilon.$$
As $\overline{M}$ is invariant this inequality will then continue to
hold if $f$ is replaced by $f(\cdot -t)$ and $P$ is replaced by
$P(\cdot  - t)$ for any $t\in G$. Now, clearly $P$ is Bohr almost
periodic. Hence, there exists a relatively dense set $R\subset G$
with $\|P - P(\cdot - t)\|_\infty <\varepsilon$ for all $t\in R$.
This easily gives
$$\overline{M}( |f -f(\cdot -t)|) \leq \overline{M}(|f - P|) +
\overline{M}( |P - P(\cdot - t)|) + \overline{M}(|P(\cdot -t) -
f(\cdot - t)|) < 3 \varepsilon$$ for all $t\in R$.
\end{proof}

From the definition and simple algebraic manipulations we infer the
following.

\begin{lemma}\label{l:bap-algebra}
Let $G$ be a { $\sigma$-compact} locally compact abelian group and
$(B_n)$ a F\o lner sequence on $G$. Then, the set of all uniformly
continuous bounded Besicovitch almost periodic functions is
invariant under taking complex conjugates and a  closed subalgebra
of the uniformly continuous bounded functions on $G$ equipped with
$\|\cdot\|_\infty$. It  contains all Bohr almost periodic functions
and is contained in the algebra of mean almost periodic functions.
\end{lemma}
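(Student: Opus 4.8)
The plan is to work throughout with the seminorm $N:=\overline{M}\circ|\cdot|$ and to read the definition as follows: a uniformly continuous bounded function is Besicovitch almost periodic precisely when it lies in the $N$-closure of the set of trigonometric polynomials $P=\sum_{j=1}^k c_j\xi_j$ with $\xi_j\in\widehat{G}$. Since these trigonometric polynomials already form an algebra that is invariant under complex conjugation (products of characters are characters and $\overline{\xi}=\xi^{-1}\in\widehat{G}$) and consists of Bohr almost periodic, hence uniformly continuous and bounded, functions, every stability statement will reduce to an approximation estimate in $N$. The only facts about $N$ I will use are that it is a seminorm, that $N(hf)\le\|h\|_\infty\,N(f)$ for bounded $h$ (by monotonicity, since $|hf|\le\|h\|_\infty|f|$), and that $N(f)\le\|f\|_\infty$ (since $|f|\le\|f\|_\infty\cdot 1$ and $\overline{M}(1)=1$).

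Given this setup, most properties are immediate. Invariance under conjugation, sums and scalar multiplication follow from homogeneity and the triangle inequality for $N$ together with the corresponding closure of the trigonometric polynomials: if $N(f-P)<\varepsilon$ then $N(\overline{f}-\overline{P})=N(f-P)<\varepsilon$ with $\overline{P}$ again a trigonometric polynomial, and sums and scalar multiples are handled analogously. Closedness under uniform limits uses $N\le\|\cdot\|_\infty$: if $f_n\to f$ uniformly with each $f_n$ Besicovitch almost periodic, pick $n$ with $\|f-f_n\|_\infty<\varepsilon/2$ and a trigonometric polynomial $P$ with $N(f_n-P)<\varepsilon/2$, so that $N(f-P)\le N(f-f_n)+N(f_n-P)\le\|f-f_n\|_\infty+\varepsilon/2<\varepsilon$; the uniform limit $f$ is still uniformly continuous and bounded. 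The inclusion of the Bohr almost periodic functions is immediate from the characterization $(\heartsuit)$ in Appendix~\ref{s:a:bap} combined with $N\le\|\cdot\|_\infty$, and the inclusion into the mean almost periodic functions is exactly the preceding proposition.

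The one step I expect to be the genuine obstacle is closure under products, because approximation in $N$ gives no control on sup norms: a trigonometric polynomial $Q$ close to $g$ in $N$ may have large $\|Q\|_\infty$, so the naive product estimate cannot be closed off. The remedy is to fix the order in which the two approximants are chosen. Given $f,g$ uniformly continuous, bounded and Besicovitch almost periodic, and $\varepsilon>0$, I first choose a trigonometric polynomial $Q$ with $\|f\|_\infty\,N(g-Q)<\varepsilon/2$; since $Q$ is a finite sum of characters it is bounded, so $\|Q\|_\infty<\infty$ is now a fixed finite number, and only afterwards do I choose a trigonometric polynomial $P$ with $\|Q\|_\infty\,N(f-P)<\varepsilon/2$. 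From the pointwise bound
\begin{equation*}
|fg-PQ|\le|f|\,|g-Q|+|Q|\,|f-P|
\end{equation*}
the seminorm inequalities give $N(fg-PQ)\le\|f\|_\infty\,N(g-Q)+\|Q\|_\infty\,N(f-P)<\varepsilon$. As $PQ=\sum_{i,j}c_id_j\,\xi_i\eta_j$ is again a trigonometric polynomial and $fg$ is uniformly continuous and bounded, this exhibits $fg$ as Besicovitch almost periodic and completes the verification that the class is a closed subalgebra invariant under conjugation, sandwiched between the Bohr and the mean almost periodic functions.
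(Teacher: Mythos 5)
Your proof is correct, and the one place where it genuinely differs from the paper is exactly the step you identified as the crux: closure under products. Both arguments face the same obstacle---an approximant $Q$ of $g$ in the mean seminorm $N=\overline{M}\circ|\cdot|$ comes with no control on $\|Q\|_\infty$---but resolve it differently. The paper chooses $P$ and $Q$ \emph{simultaneously}, with accuracies depending only on $\|f\|_\infty$ and $\|g\|_\infty$, and then repairs the unboundedness of $Q$ by truncating it to a function $Q'$ with $\|Q'\|_\infty\le\|g\|_\infty+1$ and $|g-Q'|\le|g-Q|$ pointwise; since $Q'$ is no longer a trigonometric polynomial, the paper must invoke the auxiliary fact that this truncation is still Bohr almost periodic and re-approximate it uniformly by a trigonometric polynomial $R$ with $\|R\|_\infty\le\|g\|_\infty+2$, taking $PR$ as the final approximant. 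You instead observe that nothing forces the two approximants to be chosen at once: fixing $Q$ first makes $\|Q\|_\infty$ a known finite constant, after which $P$ can be chosen with $\|Q\|_\infty N(f-P)<\varepsilon/2$, and the estimate $N(fg-PQ)\le\|f\|_\infty N(g-Q)+\|Q\|_\infty N(f-P)<\varepsilon$ closes immediately. Your sequencing trick is more elementary and self-contained, as it avoids both the truncation and the unproved assertion that truncations of Bohr almost periodic functions remain Bohr almost periodic; what the paper's method buys in exchange is the quantitative by-product that the approximant of $g$ can always be taken with sup norm at most $\|g\|_\infty+2$, a refinement not needed for this lemma. The remaining parts of your argument---conjugation, sums, scalars, closedness under uniform limits via $N\le\|\cdot\|_\infty$, the inclusion of the Bohr almost periodic functions via $(\heartsuit)$, and the inclusion into the mean almost periodic functions via the preceding proposition---coincide with the paper's treatment.
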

\begin{proof} The set is clearly  closed under taking
limits with respect to $\|\cdot\|_\infty$ as well as under  addition
and taking complex conjugates. To show that it is closed under
multiplication consider $f,g$ in this set and let $\epsilon
>0$ be arbitrary. Let $P,Q$ be trigonometric polynomials so that
$$
\overline{M}(|f-P|) < \frac{\varepsilon}{2\| g \|_\infty +4} \qquad
\mbox{ and } \qquad \overline{M}(|g-Q|)< \frac{\varepsilon}{4\| f
\|_\infty +1} \,.$$

Define
\begin{displaymath}
Q'(x):= \left\{
\begin{array}{lc}
Q(x) & \mbox{  if } |Q(x)| \leq \| g \|_\infty +1 \\
\frac{Q(x)}{|Q(x)|}\left(  \| g \|_\infty +1  \right) & \text{otherwise}
\end{array} \,.
\right.
\end{displaymath}
Then, $Q'$ is a Bohr almost periodic function and $|g-Q'| \leq
|g-Q|$, which gives $\overline{M}(|g-Q'|) \leq \overline{M}(|g-Q|)$.

Since $Q'$ is Bohr almost periodic, there exists a trigonometric
polynomial $R$ such that $\| Q'-R \|_\infty < \min\{
\frac{\varepsilon}{4\| f \|_\infty +1}, 1 \}$. In particular,
\[
\| R \|_\infty \leq \|Q'\|_\infty+1 \leq  \| g \|_\infty +2 \,.
\]
Then,
\begin{align*}
  \overline{M}(|fg-PR|) &\leq \overline{M}(|fg-fR|)+ \overline{M}(|fR-PR|) \\
  &\leq \| f \|_\infty \overline{M}(|g-R|)+\| R \|_\infty
  \overline{M}(|f-P|)\\
   &\leq \| f \|_\infty \left( \overline{M}(|g-Q'|)+\overline{M}(|Q'-R|) \right)
   + \frac{\varepsilon}{2} \\
  &\leq \| f \|_\infty \left( \overline{M}(|g-Q'|)+\| Q'-R\|_\infty \right)+
  \frac{\varepsilon}{2}\\
  & < \varepsilon \,.
\end{align*}
This shows that the set in question is closed under multiplication.
It clearly contains all Bohr almost periodic functions and is
contained in the set of mean almost periodic functions.
\end{proof}

\begin{remark}
The functions referred to as Besicovitch almost periodic were
introduced by Besicovitch in \cite{Bes} for $G =\RR$. A
corresponding class of functions was then studied by F\o lner for
general locally compact abelian groups \cite{Fol}. This class, however, does
not coincide with  the Besicovitch class for $G =\RR$. Another
approach to Besicovitch almost periodic functions is developed by
Davis in \cite{Dav}.  An  account of these developments with a focus
on aperiodic order is given in Lagarias survey \cite{Lag}. Here, we
have taken a `shortcut': We have not defined Besicovitch almost
periodic functions by some intrinsic features. Instead we have
defined them by what would be a main result in a proper theory
starting with an intrinsic definition.
\end{remark}

A crucial feature of Besicovitch almost periodic function is
existence of means in the following sense.

\begin{lemma} Let $G$ be a {$\sigma$-compact} locally compact abelian group and $(B_n)$
a F\o lner sequence on $G$. Let $f: G\longrightarrow \CC$ be a
uniformly continuous, bounded Besicovitch almost periodic
function. Then, the limit
\[
\Average(f)=\lim_{n\to \infty}\frac{1}{|B_n|} \int_{B_n} f(s)\, \dd
s
\]
exists.
\end{lemma}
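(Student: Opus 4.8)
The plan is to exploit the defining approximation property of Besicovitch almost periodicity together with the fact that every trigonometric polynomial admits a mean. Writing $a_n(g):=\frac{1}{|B_n|}\int_{B_n} g(s)\,\dd s$ for a bounded measurable $g$, the goal is to show that the sequence $(a_n(f))_n$ is Cauchy in $\CC$ and hence convergent; its limit is then $\Average(f)$.

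First I would establish that $\Average(\xi)$ exists for every $\xi\in\widehat{G}$. For the trivial character this is immediate, since $a_n(1)=1$ for all $n$. For $\xi\neq 1$, fix $t\in G$. Translation invariance of the Haar measure and $\xi(t)\xi(s)=\xi(t+s)$ give $\xi(t)\,a_n(\xi)=\frac{1}{|B_n|}\int_{t+B_n}\xi(u)\,\dd u$, whence $|a_n(\xi)-\xi(t)\,a_n(\xi)|\leq \frac{1}{|B_n|}\,|(B_n\setminus(t+B_n))\cup((t+B_n)\setminus B_n)|$, using $|\xi|\equiv 1$. The right-hand side tends to $0$ by the F\o lner property. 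Since $(a_n(\xi))_n$ is bounded by $1$, any of its limit points $L$ satisfies $L=\xi(t)L$ for all $t\in G$; choosing $t$ with $\xi(t)\neq 1$ forces $L=0$, so $a_n(\xi)\to 0$. By linearity, $\Average(P)$ exists for every trigonometric polynomial $P=\sum_{j=1}^k c_j\xi_j$; denote its value by $L_P$.

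Next I would run the approximation argument. Given $\varepsilon>0$, Besicovitch almost periodicity supplies a trigonometric polynomial $P$ with $\overline{M}(|f-P|)<\varepsilon$. Since $|a_n(f)-a_n(P)|\leq \frac{1}{|B_n|}\int_{B_n}|f-P|$, taking $\limsup_n$ gives $\limsup_n|a_n(f)-a_n(P)|\leq\overline{M}(|f-P|)<\varepsilon$. Combined with $a_n(P)\to L_P$ from the previous step, this yields an $N$ with $|a_n(f)-L_P|<2\varepsilon$ for all $n\geq N$. Consequently $|a_n(f)-a_m(f)|<4\varepsilon$ for all $m,n\geq N$, so $(a_n(f))_n$ is Cauchy and therefore converges by completeness of $\CC$.

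The genuinely substantive step is the existence of the mean of a single character, where the F\o lner property does the real work; the remainder is a routine $\varepsilon$-approximation. The one point to handle with care is that the estimate on $|a_n(f)-a_n(P)|$ only controls the \emph{limsup} of the difference rather than the difference at each fixed $n$, which is precisely why the conclusion is obtained via a Cauchy property rather than a pointwise comparison.
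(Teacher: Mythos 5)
Your proof is correct and follows essentially the same route as the paper's: the paper's (very terse) proof likewise rests on the existence of means for trigonometric polynomials, cited as well-known, followed by a ``limiting procedure'' for Besicovitch functions. You have simply filled in both ingredients explicitly --- the F\o lner argument showing $\Average(\xi)=0$ for $\xi\neq 1$, and the Cauchy-sequence argument handling the fact that $\overline{M}(|f-P|)$ only controls a $\limsup$ --- which is exactly what the paper's sketch presupposes.
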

\begin{proof} The statement is well-known for functions of the form $f =
\sum_{j=1}^k c_j \xi_j$ with $\xi_1,\ldots, \xi_k\in \widehat{G}$,
$c_1,\ldots, c_k\in \CC$. It then follows by a limiting procedure
for  Besicovitch almost periodic functions.
\end{proof}

In fact, existence of means together with a Parseval type equality
is a characterizing feature of Besicovitch almost periodic
functions.

\begin{prop}\label{p:fourier-expansion}
Let $G$ be a { $\sigma$-compact} locally compact abelian group and
$(B_n)$ a F\o lner sequence on $G$. Let $f: G\longrightarrow \CC$ be
a uniformly continuous and bounded. Then, $f$  is Besicovitch almost
periodic if and only if there exists a countable set $F\subset
\widehat{G}$ such that the  following three statements hold:
\begin{itemize}
\item  The limit  $\Average(|f|^2) = \lim_{n\to \infty} \frac{1}{|B_n|}
\int_{B_n} |f(t)|^2 \, \dd t$ exists.
\item For any  $\xi\in F$ the limit
\[
\Average(f \overline{\xi})  =\lim_{n\to \infty} \frac{1}{|B_n|}
\int_{B_n} f(t)\, \overline{\xi(t)} \, \dd t
\]
exists.

\item The equality
\[
\Average(|f|^2)  =\sum_{\xi \in F} |\Average(f \overline{\xi})|^2
\]
holds.
\end{itemize}
In this case $\Average(f \overline{\xi})= 0$ holds for all $\xi \in
\widehat{G}\setminus F$.
\end{prop}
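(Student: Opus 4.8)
The plan is to play the $B^1$-type seminorm $\overline{M}(|\cdot|)$ of the definition against the $B^2$-type quantity $\Average(|\cdot|^2)$ of the Parseval identity, with the orthogonality relation $\Average(\xi\overline{\eta})=\delta_{\xi,\eta}$ for $\xi,\eta\in\widehat{G}$ (which holds along any F\o lner sequence and is exactly what makes means of trigonometric polynomials exist) as the bridge. Throughout I would abbreviate $a(\xi):=\Average(f\overline{\xi})$. A first preliminary is that, for Besicovitch almost periodic $f$, every average in sight exists: since the uniformly continuous bounded Besicovitch almost periodic functions form a conjugation-closed algebra containing $\widehat{G}$ (Lemma \ref{l:bap-algebra}), each of $f\overline{\xi}$ and $|f|^2=f\overline{f}$ lies in this algebra, so $a(\xi)$ and $\Average(|f|^2)$ exist by the preceding lemma on means. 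For the forward direction I would then prove Bessel's inequality: for any finite family $\xi_1,\dots,\xi_m\in\widehat{G}$ the function $g:=f-\sum_j a(\xi_j)\xi_j$ is Besicovitch almost periodic, so $\Average(|g|^2)\geq 0$ exists, and expanding the square while using $a(\xi_j)=\Average(f\overline{\xi_j})$ together with orthogonality collapses the cross terms to give
$$\Average(|g|^2)=\Average(|f|^2)-\sum_{j=1}^m |a(\xi_j)|^2 .$$
Nonnegativity yields $\sum_{\xi\in\widehat{G}}|a(\xi)|^2\leq \Average(|f|^2)<\infty$, so $F:=\{\xi:a(\xi)\neq 0\}$ is countable. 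The same computation shows that $\sum_j a(\xi_j)\xi_j$ is the $\Average(|\cdot|^2)$-orthogonal projection of $f$ onto the span of $\{\xi_1,\dots,\xi_m\}$, hence the best approximation there: $\Average(|f-\sum_j a(\xi_j)\xi_j|^2)\leq \Average(|f-Q|^2)$ for every $Q$ supported on $\{\xi_1,\dots,\xi_m\}$.

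The remaining step, upgrading Bessel to Parseval, is the one I expect to be the main obstacle, precisely because the definition only supplies trigonometric polynomials $P$ close to $f$ in the weak seminorm $\overline{M}(|f-P|)$, whereas Parseval is an $L^2$-statement; a priori such $P$ need not be uniformly bounded, so $\overline{M}(|f-P|)<\varepsilon$ does not control $\Average(|f-P|^2)$. I would resolve this by truncation and reapproximation. Given $P$ with $\overline{M}(|f-P|)<\varepsilon$, truncate it at level $\|f\|_\infty$; the truncation $\widetilde P$ is again Bohr almost periodic (truncation is Lipschitz, and Bohr almost periodicity is stable under composition with uniformly continuous maps) and satisfies $|f-\widetilde P|\leq|f-P|$ with $\|\widetilde P\|_\infty\leq\|f\|_\infty$. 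Approximating $\widetilde P$ uniformly by a trigonometric polynomial $P'$ via $(\heartsuit)$ then gives $\|P'\|_\infty\leq\|f\|_\infty+1$ and $\overline{M}(|f-P'|)<2\varepsilon$. Since now $|f-P'|\leq 2\|f\|_\infty+1=:C$ pointwise, $\Average(|f-P'|^2)\leq C\,\overline{M}(|f-P'|)<2C\varepsilon$, and combining this with the projection inequality applied to the frequency support of $P'$ gives
$$\Average(|f|^2)-\sum_{\xi\in F}|a(\xi)|^2\leq \Average\Bigl(\bigl|f-\sum_{\xi\in\mathrm{supp}\,P'}a(\xi)\xi\bigr|^2\Bigr)\leq \Average(|f-P'|^2)<2C\varepsilon .$$
Letting $\varepsilon\to 0$ and comparing with Bessel yields $\Average(|f|^2)=\sum_{\xi\in F}|a(\xi)|^2$.

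For the converse I would assume a countable $F$ with the three properties is given and set $a(\xi)=\Average(f\overline{\xi})$ for $\xi\in F$. Given $\varepsilon>0$, convergence of the Parseval sum lets me pick a finite $F'\subset F$ with $\sum_{\xi\in F\setminus F'}|a(\xi)|^2<\varepsilon^2$, and I put $P:=\sum_{\xi\in F'}a(\xi)\xi$. Expanding $|f-P|^2$ and using the hypotheses (existence of $\Average(|f|^2)$ and of each $\Average(f\overline{\xi})$, $\xi\in F'$) together with orthogonality gives directly that $\Average(|f-P|^2)$ exists and equals $\Average(|f|^2)-\sum_{\xi\in F'}|a(\xi)|^2=\sum_{\xi\in F\setminus F'}|a(\xi)|^2<\varepsilon^2$. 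A single application of the Cauchy--Schwarz inequality on each $B_n$ then yields $\overline{M}(|f-P|)\leq \Average(|f-P|^2)^{1/2}<\varepsilon$, so $f$ is Besicovitch almost periodic. Finally, for the closing assertion, once $f$ is known Besicovitch almost periodic the forward direction furnishes Parseval over the full support $\{\xi:a(\xi)\neq 0\}$; comparing this with the given identity $\Average(|f|^2)=\sum_{\xi\in F}|a(\xi)|^2$ forces $\sum_{\xi\notin F}|a(\xi)|^2=0$, i.e. $\Average(f\overline{\xi})=0$ for all $\xi\in\widehat{G}\setminus F$.
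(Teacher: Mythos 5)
Your proof is correct, but it takes a genuinely different route from the paper on the direction that matters. The paper's own argument for the forward implication stops after establishing existence of the means $\Average(f\overline{\xi})$ and $\Average(|f|^2)$ (via the algebra property of Lemma \ref{l:bap-algebra}, exactly as in your preliminary step) and then outsources the Parseval identity to the companion paper \cite{LeSpStr}; the only computation the paper writes out in full is the converse, where its Cauchy--Schwarz expansion of $\overline{M}(|f - \sum_{j} \Average(f\overline{\xi_j})\, \xi_j|)^2$ coincides with yours essentially line by line, as does the derivation of the closing assertion. Your forward direction, by contrast, is self-contained: Bessel's inequality from nonnegativity of $\Average(|g|^2)$, the best-approximation property of the partial Fourier sums, and the truncation-plus-reapproximation device that converts the defining bound $\overline{M}(|f-P|)<\varepsilon$ (where $P$ is a priori unbounded, so this $L^1$-type bound controls nothing quadratic) into $\Average(|f-P'|^2) < 2C\varepsilon$ with $\|P'\|_\infty \leq \|f\|_\infty + 1$. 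That truncation step addresses a real gap one would otherwise fall into, and it is the same device the paper itself deploys in the proof of Lemma \ref{l:bap-algebra} to show closure under products (the function $Q'$ there); you have effectively redeployed it to prove the statement the paper only cites. What the paper's route buys is brevity; what yours buys is a complete proof inside the present framework, plus an explicit derivation of $\Average(f\overline{\xi})=0$ for $\xi \in \widehat{G}\setminus F$ by comparing Parseval over $F$ with Parseval over the full set of nonvanishing frequencies.
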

\begin{proof} Let $f$ be Besicovitch almost periodic.  Clearly, any $\xi \in
\widehat{G}$ is Bohr almost periodic and hence Besicovitch almost
periodic. Thus, $f\overline{\xi}$ is Besicovitch almost periodic as
product of Besicovitch almost periodic functions. Hence, the second
point holds (even for all $\xi \in\widehat{G}$). Similarly, $|f|^2$
is Besicovitch almost periodic. Given this, the first point follows.
The last point is contained in \cite{LeSpStr}.

\smallskip

Now, consider $f$ satisfying the three points above.   Let
$\xi_1,\xi_2,\ldots $ be an enumeration of the $\xi \in F$. A
computation involving Cauchy--Schwarz' inequality in the first step,
$\overline{M}(1) =1$ in the second step  and existence of averages
$A$ in the third step gives
\begin{eqnarray*}
&\overline{M}&(|f -\sum_{j=1}^N A(f\overline{\xi_j} ) \xi_j|)^2 \\
&\leq &\overline{M}(|f -\sum_{j=1}^N A(f\overline{\xi_j} ) \xi_j|^2) \overline{M}(1)\\
&=&\overline{M}\left(|f|^2  - f \overline{\sum_{j=1}^N
A(f\overline{\xi_j} ) \xi_j} - \overline{f}\sum_{j=1}^N
A(f\overline{\xi_j} ) \xi_j+ \sum_{j,k=1}^N A(f\overline{\xi_j} )
\overline{ A(f\overline{\xi_k})} \xi_j \overline{\xi_k}\right)  \\
 &=& \Average(|f|^2) -\Average( f \overline{\sum_{j=1}^N
A(f\overline{\xi_j} ) \xi_j} ) - \Average( \overline{f}\sum_{j=1}^N
A(f\overline{\xi_j} ) \xi_j ) + \Average( \sum_{j=1,k}^N
A(f\overline{\xi_j})
\overline{A(f\overline{\xi_k})} \xi_j \overline{\xi_k})\\
&=& \Average(|f|^2) - \sum_{j=1}^N |A(f
\overline{\xi})|^2\\
&\to& 0
\end{eqnarray*}
for $N\to\infty$. Here, the penultimate step is a direct computation invoking that $A$
is linear with $A(\eta)=0$ for $0\neq \eta\in \widehat{G}$.
Indeed, this shows that $f$ can be approximated in
mean by a trigonometric polynomial arbitrarily well. This finishes
the proof.
\end{proof}

\begin{remark}
The considerations of this appendix easily carry over
to bounded measurable functions (instead of uniformly continuous
bounded functions). As the article is concerned with continuous
functions we do not elaborate on this but rather leave the details
to the reader.
\end{remark}

\section{Weyl almost periodic functions and uniform means}\label{s:a:wap}

In this appendix, we consider a uniform type of mean.

\medskip

Let $(B_n)$ be a F\o lner sequence in $G$ and define for bounded  $f
: G\longrightarrow \RR$ and $n\in\NN$
\[
\overline{M}_n (f) := \sup_{r\in G} \frac{1}{|B_n|} \int_{B_n + r}
f(t)\, \dd t.
\]

\begin{prop}\label{p:a:final-averaging} For a bounded measurable function
$f:G\longrightarrow \RR$ and $\varepsilon >0$ the following
assertions are equivalent:

\begin{itemize}

\item[(i)] There exists an $N\in \NN$ with $\overline{M}_N (f) < \varepsilon$.

\item[(ii)] There exists an $N_0\in\NN$ with $\overline{M}_N (f) <\varepsilon$
for all $N\geq N_0$.

\end{itemize}
\end{prop}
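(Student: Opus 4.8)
The implication (ii)$\Longrightarrow$(i) is immediate, so the content is (i)$\Longrightarrow$(ii). The plan is to show that a single good window $B_N$ forces all sufficiently large windows to be good, up to a boundary error that the F\o lner property drives to zero \emph{uniformly in the translation parameter} $r$. To begin I would fix notation: assume $\overline{M}_N(f) =: \varepsilon' < \varepsilon$ for some $N$, and set $C := \|f\|_\infty < \infty$, which is finite since $f$ is bounded.

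The key device is to average a $B_N$-window over all its translates sitting inside $B_M$. Concretely, for fixed $r \in G$ and $v \in B_N$ one has, by translation invariance of Haar measure and because the two integrals below differ only over a set of measure $|(B_M+v)\triangle B_M|$,
$$\frac{1}{|B_M|}\int_{B_M + r} f \;\leq\; \frac{1}{|B_M|}\int_{B_M + r + v} f \;+\; C\,\frac{|(B_M + v)\,\triangle\, B_M|}{|B_M|}.$$

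Next I would average this inequality over $v \in B_N$ against the normalized measure $\tfrac{1}{|B_N|}\dd v$ and apply Fubini to the resulting double integral. Substituting and swapping the order of integration turns the first term into $\tfrac{1}{|B_M|}\int_{B_M}\big(\tfrac{1}{|B_N|}\int_{B_N + w + r} f\big)\dd w$; the inner bracket is at most $\overline{M}_N(f) = \varepsilon'$ for every $w$, so the whole term is $\leq \varepsilon'$. Taking the supremum over $r$ at the very end then yields the clean estimate
$$\overline{M}_M(f) \;\leq\; \varepsilon' + C\,\delta_M, \qquad \delta_M := \frac{1}{|B_N|}\int_{B_N}\frac{|(B_M + v)\,\triangle\, B_M|}{|B_M|}\,\dd v,$$
where crucially $\delta_M$ does not depend on $r$; this is exactly where the uniformity in the translation parameter is gained.

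The main (and essentially only) obstacle is to control $\delta_M$. Here I would deliberately avoid claiming uniform convergence of the F\o lner ratios over the compact set $B_N$, and instead argue by dominated convergence: for each fixed $v \in B_N$ the F\o lner property gives $|(B_M + v)\triangle B_M|/|B_M| \to 0$ as $M \to \infty$, the integrand is bounded by $2$, and $|B_N| < \infty$ since $B_N$ is relatively compact; hence $\delta_M \to 0$. Finally, since $\varepsilon' < \varepsilon$, I choose $N_0$ so large that $C\,\delta_M < \varepsilon - \varepsilon'$ for all $M \geq N_0$, which gives $\overline{M}_M(f) < \varepsilon$ for all such $M$, namely (ii).
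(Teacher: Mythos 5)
Your proof is correct and is essentially the paper's own argument: your averaged quantity $\frac{1}{|B_N|}\int_{B_N}\bigl(\frac{1}{|B_M|}\int_{B_M+r+v} f(s)\,\dd s\bigr)\dd v$ coincides (after translating the inner integral) with the paper's auxiliary quantity $C_r$, the Fubini step bounding it by $\overline{M}_N(f)$ is the same, and the $r$-independent error term is the same F\o lner expression. Your explicit appeal to dominated convergence (integrand bounded by $2$, finite measure of $B_N$) is just a slightly more careful justification of the paper's remark that compactness of $B_N$ yields convergence of the error integral to zero.
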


\begin{proof} (ii)$\Longrightarrow$(i): This is clear.

\smallskip

(i)$\Longrightarrow$(ii): Consider  $n\in\NN$. Define for $r\in G$
\[
C_{r}:= \frac{1}{|B_N|}\int_{B_N }
\left(\int\frac{1}{|B_n|}\int_{B_n + r} f(s +t)\, \dd s \right)\,
\dd t.
\]
Fubinis theorem and the assumption (i) directly give
\[
C_{r}=\frac{1}{|B_n|}\int_{B_n + r} \left(\frac{1}{|B_N|}\int_{B_N }
f(s +t)\, \dd t \right)\, \dd s \leq \frac{1}{|B_n|}\int_{B_n + r}
\overline{M}_N (f)\, \dd s = \overline{M}_N (f).
\]

On the other hand we can easily see that (for large $n$) the
additional averaging over $B_N$ does not play a role. More
specifically, we can compute as follows:
\begin{eqnarray*}
\left| C_{r} - \frac{1}{|B_n|} \int_{B_n + r} f(s)\, \dd s \right|
 &=& \left| C_{r} - \frac{1}{|B_N|} \int_{B_N } \left(\frac{1}{|B_n|} \int_{B_n + r} f(s)\, \dd s\right)\, \dd t\right|\\
 &=& \left|\frac{1}{|B_N|}\int_{B_N }
\left(\frac{1}{|B_n|}\int_{B_n + r} (f(s +t) - f(s) )\, \dd s
\right)\,
\dd t\right|\\
&\leq & \frac{1}{|B_N|} \int_{B_N} 2\|f\|_\infty \frac{|(B_n + t)\,
\triangle\, B_n| }{|B_n| }\, \dd t.
\end{eqnarray*}
Now, due to the F\o lner condition the integrand in the last term
can easily be seen to go pointwise to $0$  for $n\to \infty$. As
$B_N$ is compact, we find convergence to zero of the integral for
each fixed $N$. As there is no dependence on $r\in G$ this
convergence is independent of $r\in G$. This easily gives the
desired statement.
\end{proof}

A bounded $f : G\longrightarrow \CC$ is Weyl almost periodic if
for all $\varepsilon >0$ there exist   $k\in \NN$, $\xi_1,\ldots,
\xi_k\in \widehat{G}$ and $c_1,\ldots ,c_k\in\CC$ with
$$\limsup_{n\to\infty} \overline{M}_n (|f - \sum_{j=1}^k c_{\xi_j} \xi_j|) <
\varepsilon.$$ By the preceding proposition it is possible to
replace this condition by the requirement that for each $\varepsilon
>0$ there exists an $N\in \NN$ , $k\in \NN$, $\xi_1,\ldots, \xi_k \in \widehat{G}$
and $c_1,\ldots ,c_k\in\CC$ with
$$ \overline{M}_N (|f - \sum_{j=1}^k c_{\xi_j} \xi_j|) <
\varepsilon.$$

As usual it is also  possible to characterize this by relative
denseness of $\varepsilon$-almost periods. More specifically, as
discussed in \cite{Spi} a measurable bounded $f : G\longrightarrow
\CC$ is Weyl almost periodic if and only if  for each $\varepsilon
>0$ there exists a relatively dense set $D\subset G$ and an $N_0\in
\NN$ with
$$\overline{M}_n (|f - f(\cdot - t)|) < \varepsilon$$
for all $t\in D$ and $n\geq N_0$. By the preceding proposition
validity for all $n\geq N_0$ can be replaced by validity for one
$n$.

Clearly, the set of Weyl almost periodic functions forms an algebra,
is closed under uniform limits and under multiplication with
Bohr almost periodic functions. Moreover, a crucial feature of Weyl
mean almost periodic functions is uniform existence of means.

\begin{lemma}
Let $G$ be a locally compact abelian group and $(B_n)$
a F\o lner sequence on $G$. Let $f: G\longrightarrow \CC$ be a
bounded Weyl almost periodic function. Then, for any sequence
$(r_n)$ in $G$ the limit
\[
\lim_{n\to \infty}\frac{1}{|B_n|} \int_{B_n + r_n} f(s)\, \dd s
\]
exists and is independent of the sequence. In particular, the
convergence is uniform in the chosen sequence.
\end{lemma}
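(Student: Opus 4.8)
The plan is to reduce the claim to the case of trigonometric polynomials and then transfer the conclusion to $f$ via the defining approximation of Weyl almost periodicity. The whole argument rests on one elementary inequality, $\frac{1}{|B_n|}\int_{B_n+r_n}|f-P|\le \overline{M}_n(|f-P|)$, which holds for every sequence $(r_n)$ simply because $\overline{M}_n$ is the supremum over all translates of $B_n$; this is what makes the transfer uniform in the sequence.

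First I would handle a single character $\xi\in\widehat{G}$. Writing $\frac{1}{|B_n|}\int_{B_n+r_n}\xi(s)\,\dd s=\xi(r_n)\,\frac{1}{|B_n|}\int_{B_n}\xi(t)\,\dd t$, the factor $\xi(r_n)$ has modulus one, so everything hinges on $\frac{1}{|B_n|}\int_{B_n}\xi$. For $\xi=1$ this is identically $1$; for $\xi\neq 1$ pick $t_0$ with $\xi(t_0)\neq 1$ and note that $(\xi(t_0)-1)\int_{B_n}\xi=\int_{t_0+B_n}\xi-\int_{B_n}\xi$, whose modulus is at most $|(t_0+B_n)\,\triangle\, B_n|$. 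By the F\o lner property this is $o(|B_n|)$, hence $\frac{1}{|B_n|}\int_{B_n}\xi\to 0$. Crucially the bound $\bigl|\frac{1}{|B_n|}\int_{B_n+r_n}\xi\bigr|\le\bigl|\frac{1}{|B_n|}\int_{B_n}\xi\bigr|$ does not involve $r_n$, so for any trigonometric polynomial $P=\sum_{j}c_j\xi_j$ the averages $\frac{1}{|B_n|}\int_{B_n+r_n}P$ converge to $\Average(P)$ (the coefficient of the trivial character) uniformly over all sequences $(r_n)$.

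Next I would pass to a general bounded Weyl almost periodic $f$. Fix $\varepsilon>0$ and, by the definition of Weyl almost periodicity, choose a trigonometric polynomial $P$ with $\limsup_n\overline{M}_n(|f-P|)<\varepsilon$, so that $\overline{M}_n(|f-P|)<\varepsilon$ for all $n\ge N_0$. Combining the displayed inequality with the triangle inequality yields, for $n\ge N_0$, $\bigl|\frac{1}{|B_n|}\int_{B_n+r_n}f-\frac{1}{|B_n|}\int_{B_n+r_n}P\bigr|<\varepsilon$, while the second term tends to $\Average(P)$ by the first step. Hence both $\limsup_n$ and $\liminf_n$ of $\frac{1}{|B_n|}\int_{B_n+r_n}f$ lie within $2\varepsilon$ of $\Average(P)$, so they differ by at most $4\varepsilon$; as $\varepsilon>0$ was arbitrary they coincide and the limit $L$ exists.

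Finally, for independence of $(r_n)$ and uniformity, I note that in the estimate above $\Average(P)$ is the \emph{same} number for every sequence, and that $N_0$ together with the convergence $\frac{1}{|B_n|}\int_{B_n+r_n}P\to\Average(P)$ from the first step are uniform over all $(r_n)$. Thus any two sequences produce limits within $4\varepsilon$ of each other, forcing them to agree, and the rate of convergence depends only on $\varepsilon$ and $P$, not on the sequence. The one delicate point — and the main obstacle — is exactly this uniformity: one must be sure the trigonometric-polynomial limit in the first step is genuinely independent of $(r_n)$, which is precisely what the $r_n$-free bound on $\frac{1}{|B_n|}\int_{B_n}\xi$ provides.
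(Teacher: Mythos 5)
Your proof is correct and takes essentially the same approach as the paper: the paper's proof (deferred to the Besicovitch case in Appendix C) consists precisely of the two steps you carry out, namely establishing the statement for trigonometric polynomials and then transferring it to $f$ by an approximation argument. You simply fill in the details the paper leaves to the reader — the F\o lner-property computation showing nontrivial characters have vanishing averages, and the sequence-independent transfer inequality $\frac{1}{|B_n|}\int_{B_n+r_n}|f-P|\le\overline{M}_n(|f-P|)$ that makes the convergence uniform in $(r_n)$.
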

The \textit{proof} follows  the same lines as the proof of the
corresponding statement for Besicovitch almost periodic functions in
the preceding section. For this reason we leave it to the reader.
\end{appendix}

\end{document}